\providecommand{\U}[1]{\protect\rule{.1in}{.1in}}
\newtheorem{theorem}{Theorem}[section]
\theoremstyle{plain}
\newtheorem{corollary}{Corollary}[section]
\newtheorem{lemma}{Lemma}[section]
\newtheorem{proposition}{Proposition}[section]
\newtheorem{remark}{Remark}[section]
\numberwithin{equation}{section}
\newtheorem{theorema}{Theorem}[section]
\newtheorem{propositiona}{Proposition}[section]
\begin{document}
\title[Rellich-Sobolev interpolation inequality]{Sobolev interpolation inequalities with optimal Hardy-Rellich inequalities and
critical exponents}
\author{Nguyen Anh Dao}
\address{N. A. Dao: Institute of Applied Mathematics\\
University of Economics Ho Chi Minh City\\
Ho Chi Minh City, Viet Nam}
\email{anhdn@ueh.edu.vn}
\author{Anh Xuan Do}
\address{Anh Xuan Do: Department of Mathematics, University of Connecticut, Storrs, CT
06269, USA}
\email{anh.do@uconn.edu}
\author{Nguyen Lam}
\address{Nguyen Lam: School of Science and the Environment, Grenfell Campus, Memorial
University of Newfoundland, Corner Brook, NL A2H5G4, Canada}
\email{nlam@mun.ca}
\author{Guozhen Lu}
\address{Guozhen Lu: Department of Mathematics, University of Connecticut, Storrs, CT
06269, USA}
\email{guozhen.lu@uconn.edu}
\date{\today}
\thanks{N.A. Dao was partially supported by University of Economics Ho Chi Minh
	City (UEH), Vietnam. A. Do and G. Lu were partially supported by
	grants from the Simons Foundation and a Simons Fellowship. N. Lam
	was partially supported by an NSERC Discovery Grant.}

\begin{abstract}
We establish a new family of the critical higher order Sobolev interpolation
inequalities for radial functions as well as for non-radial functions. These
Sobolev interpolation inequalities are sharp in the sense that they use the
optimal quadratic forms of the sharp Hardy-Rellich inequalities and cover the
Sobolev critical exponents. Our results extend those studied by Dietze and Nam
in \cite{DN23} for the first order derivative case to higher order setting.
The well-known P\'{o}lya-Szeg\"{o} symmetrization principle and the nonlinear
ground state representation play an important role in the work of \cite{DN23}.
To overcome the absence of the P\'{o}lya-Szeg\"{o} principle and the nonlinear
ground state representation in the higher order case, our proofs rely on the
Fourier analysis and a higher order verion of the Talenti comparison
principle. We also study a new version of the critical Hardy-Sobolev
interpolation inequality involving the critical quadratic form of the Hardy
inequality and Lorentz norms. Our critical Hardy-Sobolev interpolation
inequality complements the result of Dietze and Nam in \cite{DN23}.

\end{abstract}
\maketitle

\section{Introduction}

In the two celebrated papers \cite{Gag59, Nir59}, Gagliardo and Nirenberg
proved the following Sobolev interpolation inequality, also known as the
Gagliardo-Nirenberg inequality:%

\[
\left\Vert u\right\Vert _{W^{k_{1},p_{1}}}^{\theta}\left\Vert u\right\Vert
_{W^{k_{2},p_{2}}}^{1-\theta}\gtrsim\left\Vert u\right\Vert _{W^{k,p}}\text{,
}\forall u\in W^{k_{1},p_{1}}\cap W^{k_{2},p_{2}}.
\]
Here $0\leq\theta\leq1$, $1\leq p$, $p_{1}$, $p_{2}\leq\infty$ and the
nonnegative integers $k$, $k_{1}$, $k_{2}$ satisfy the standard conditions of
the interpolation inequalities:
\begin{align*}
k  &  =\theta k_{1}+\left(  1-\theta\right)  k_{2}\\
\frac{1}{p}  &  =\frac{\theta}{p_{1}}+\frac{1-\theta}{p_{2}}.
\end{align*}
The Gagliardo-Nirenberg inequality plays vital roles in several areas of
mathematics such as in Sobolev spaces theory and in partial differential
equations. Therefore, in the past 80 years, it has been investigated
extensively and intensively in the literature. Given the enormous literature
on the interpolation inequalities, it is impossible to provide a comprehensive
list of bibliographies on the topic. Therefore, we just refer the interested
reader to \cite{BM18, BVY21, BV97, Dao23, DLL22, DLL22a, DN23, Lu96, Lu97,
Lu00, MNN21, PP14, VZ00}, to name just a few, for some variants and
generalizations on the Gagliardo-Nirenberg inequality that relate to this
paper's main purposes, and to the recent papers \cite{BCFGG20, CFLL24, CZ13, DY23,
DFLL23, DE20, DEL16, FFRS21, Fly20a, Fly20, MN22, VHN21}, for instance, for
vast references on this subject.

Gagliardo-Nirenberg inequality has also been investigated with Sobolev norms
replaced by other norms such as BMO norms, Morrey norms and Besov norms. For
instance, in \cite{PP14}, the authors studied the following
Gagliardo-Nirenberg inequality with Morrey norms: For $N\geq3$ and $1-\frac
{2}{N}\leq\theta\leq1$, we have
\begin{equation}
\left(  \int_{\mathbb{R}^{N}}|\nabla u|^{2}dx\right)  ^{\theta}\left(
\sup_{R>0,y\in\mathbb{R}^{N}}\frac{1}{R^{2}}\int_{B\left(  y,R\right)
}|u\left(  x\right)  |^{2}dx\right)  ^{1-\theta}\gtrsim\left\Vert u\right\Vert
_{L^{\frac{2N}{N-2}}}^{2},~\forall u\in D^{1,2}(\mathbb{R}^{N}). \label{SM}%
\end{equation}
Here $D^{k,2}(\mathbb{R}^{N})$ denotes the completion of $C_{0}^{\infty
}\left(  \mathbb{R}^{N}\right)  $ with respect to the semi-norm $\left(
\int_{\mathbb{R}^{N}}|\nabla^{k}u|^{2}dx\right)  ^{\frac{1}{2}}$. Obviously,
the above result implies that
\[
\left(  \int_{\mathbb{R}^{N}}|\nabla u|^{2}dx\right)  ^{\theta}\left(
\sup_{y\in\mathbb{R}^{N}}\int_{\mathbb{R}^{N}}\dfrac{|u(x)|^{2}}{|x-y|^{2}%
}dx\right)  ^{1-\theta}\gtrsim\left\Vert u\right\Vert _{L^{\frac{2N}{N-2}}%
}^{2},~\forall u\in D^{1,2}(\mathbb{R}^{N}).
\]
We also note here that the Morrey term in (\ref{SM}) can be improved to the
Besov norm. Indeed, it was showed in \cite{DLL22} by Dao, Lam and Lu that

\begin{theorema}
\label{Thm 1.2}Let $m$, $k$ be integers with $0\leq k<m$ and $s\geq0$ such
that $k+s>0$. Let $u\in\mathcal{S}^{\prime}(\mathbb{R}^{N})$ be such that
$\nabla^{m}u\in L^{p}(\mathbb{R}^{N})$, $1\leq p<\infty$ and $u\in\dot{B}%
^{-s}(\mathbb{R}^{N})$. Then, we have $\nabla^{k}u\in L^{r}(\mathbb{R}^{N})$,
$r=p\left(  \dfrac{m+s}{k+s}\right)  $, and
\begin{equation}
\left\Vert u\right\Vert _{\dot{B}^{-s}}^{\frac{m-k}{m+s}}\left\Vert \nabla
^{m}u\right\Vert _{L^{p}}^{\frac{k+s}{m+s}}\gtrsim\left\Vert \nabla
^{k}u\right\Vert _{L^{r}}, \label{SB}%
\end{equation}
where we denote the Besov space $\dot{B}^{-s}=\dot{B}_{\infty,\infty}^{-s}$.
\end{theorema}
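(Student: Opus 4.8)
\emph{Proof proposal.} The plan is to prove \eqref{SB} by localizing $u$ to a single dyadic frequency scale $2^{J}$, estimating the low frequencies by the Besov norm and the high frequencies by $\nabla^{m}u$, and then optimizing in $J$. Fix the homogeneous Littlewood--Paley blocks $\Delta_{j}$ ($j\in\mathbb{Z}$) and, for $J\in\mathbb{Z}$, write $u=S_{J}u+(\mathrm{Id}-S_{J})u$ with $S_{J}=\sum_{j<J}\Delta_{j}$. For the low frequencies, Bernstein's inequality and the definition of the Besov norm give $\|\nabla^{k}\Delta_{j}u\|_{L^{\infty}}\lesssim 2^{kj}\|\Delta_{j}u\|_{L^{\infty}}\lesssim 2^{(k+s)j}\|u\|_{\dot{B}^{-s}}$; since $k+s>0$ the geometric series over $j<J$ converges, so $\|\nabla^{k}S_{J}u\|_{L^{\infty}}\lesssim 2^{(k+s)J}\|u\|_{\dot{B}^{-s}}$, and in particular $\nabla^{k}S_{J}u\in L^{\infty}$ for every $J$.

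For the high frequencies, I would write $\nabla^{k}\Delta_{j}u=2^{(k-m)j}\,\psi_{j}\ast \nabla^{m}u$ with $\psi_{j}=2^{jN}\psi(2^{j}\cdot)$ for a suitable fixed Schwartz function $\psi$ (this is simply $\nabla^{k-m}$, a Fourier multiplier of negative order since $k<m$, applied to a dyadic block of $\nabla^{m}u$), and use that an $L^{1}$-normalized Schwartz mollifier is pointwise dominated by the Hardy--Littlewood maximal operator $M$ to get $|\nabla^{k}\Delta_{j}u(x)|\lesssim 2^{(k-m)j}M(\nabla^{m}u)(x)$. Since $m>k$, summing over $j\ge J$ gives $|\nabla^{k}(\mathrm{Id}-S_{J})u(x)|\lesssim 2^{-(m-k)J}M(\nabla^{m}u)(x)$ for a.e.\ $x$. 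Adding the two estimates, for every integer $J$ and a.e.\ $x$ one has $|\nabla^{k}u(x)|\lesssim 2^{(k+s)J}\|u\|_{\dot{B}^{-s}}+2^{-(m-k)J}M(\nabla^{m}u)(x)$, and choosing the integer $J$ that balances the two summands (legitimate up to a bounded factor) gives the pointwise inequality
\[
|\nabla^{k}u(x)|\lesssim \|u\|_{\dot{B}^{-s}}^{\frac{m-k}{m+s}}\bigl(M(\nabla^{m}u)(x)\bigr)^{\frac{k+s}{m+s}}\qquad\text{a.e.}
\]
When $p>1$ I would then finish by raising this to the power $r$, integrating over $\mathbb{R}^{N}$, using the arithmetic identities $r\cdot\frac{k+s}{m+s}=p$ and $r\cdot\frac{m-k}{m+s}=r-p$, and invoking the Hardy--Littlewood maximal theorem $\|M(\nabla^{m}u)\|_{L^{p}}\lesssim \|\nabla^{m}u\|_{L^{p}}$; this produces $\|\nabla^{k}u\|_{L^{r}}^{r}\lesssim \|u\|_{\dot{B}^{-s}}^{r-p}\|\nabla^{m}u\|_{L^{p}}^{p}$, which is \eqref{SB}, and in particular shows $\nabla^{k}u\in L^{r}$.

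The step I expect to be the main obstacle is the endpoint $p=1$: there the maximal theorem fails on $L^{1}$, and the pointwise bound above only yields $\nabla^{k}u\in L^{r,\infty}$, which falls short of the desired strong $L^{r}$ statement by one weak-to-strong improvement. To recover the strong norm I would pass to the distribution function: for each level $\lambda>0$ choose the cutoff $J(\lambda)$ with $2^{(k+s)J(\lambda)}\sim \lambda/\|u\|_{\dot{B}^{-s}}$, so that the low-frequency bound forces $\{\,|\nabla^{k}u|>\lambda\,\}\subset\{\,|\nabla^{k}(\mathrm{Id}-S_{J(\lambda)})u|>\lambda/2\,\}$, and then estimate the latter set by Chebyshev together with Bernstein's inequality in an auxiliary $L^{q}$ with $1\le q<N/(N-(m-k))$, using $\|\nabla^{k}(\mathrm{Id}-S_{J})u\|_{L^{q}}\lesssim 2^{(N(1-1/q)-(m-k))J}\|\nabla^{m}u\|_{L^{1}}$. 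Letting $q$ vary gives membership of $\nabla^{k}u$ in a one-parameter family of weak Lebesgue spaces whose exponents accumulate at $r$, and combining this with the (iterated) endpoint Sobolev embedding $\|\nabla^{k}u\|_{L^{N/(N-(m-k))}}\lesssim \|\nabla^{m}u\|_{L^{1}}$ should yield weak-type bounds bracketing $r$; real interpolation between Lorentz spaces, which keeps $L^{r,r}=L^{r}$ as an interior space, then upgrades the conclusion to the strong $L^{r}$ estimate, the homogeneity of the inequality pinning down the exponents $\frac{m-k}{m+s}$ and $\frac{k+s}{m+s}$ exactly as in \eqref{SB}. The genuinely delicate points are producing the weak-type bound on the side of $r$ \emph{not} reached by the Bernstein family and treating separately the borderline configurations $m-k\ge N$ and $m+s=N$; this is where the real work in the $p=1$ case lies.
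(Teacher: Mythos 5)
This theorem is not proved in the present paper: it is Theorem A, quoted verbatim from \cite{DLL22}, and the paper only uses it (via the pointwise estimate of \cite[Lemma 3.1]{DLL22}, which is exactly the maximal-function inequality you derive). So I am comparing your proposal against the standard argument that \cite{DLL22} is built on. Your treatment of the case $p>1$ is that argument: the Littlewood--Paley split at scale $2^J$, Bernstein for the low block, the observation that each high block $\nabla^k\Delta_j u$ is $2^{(k-m)j}$ times an $L^1$-normalized mollifier of $\nabla^m u$ and hence pointwise dominated by $2^{(k-m)j}M(\nabla^m u)$, geometric summation using $k+s>0$ and $m>k$, optimization in $J$, and the Hardy--Littlewood maximal theorem. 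The exponent bookkeeping $r\cdot\tfrac{k+s}{m+s}=p$, $r\cdot\tfrac{m-k}{m+s}=r-p$ is correct, and this gives \eqref{SB} for all $1<p<\infty$ exactly as in \cite{DLL22}.

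The gap is the endpoint $p=1$, and your sketch does not close it. With $J(\lambda)$ chosen so that $2^{(k+s)J(\lambda)}\sim\lambda/\|u\|_{\dot B^{-s}}$, the Chebyshev--Bernstein bound in $L^q$, $1\le q<N/(N-(m-k))$, yields $|\{|\nabla^k u|>\lambda\}|\lesssim C(u)\,\lambda^{-\rho(q)}$ with
\[
\rho(q)=\frac{q(m+s-N)+N}{k+s},\qquad\text{so}\qquad \rho(q)-r=\frac{(m+s-N)(q-1)}{k+s},
\]
and one also computes $\frac{N}{N-(m-k)}-r=\frac{(m-k)(m+s-N)}{(N-m+k)(k+s)}$. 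Thus for $q>1$ the exponents $\rho(q)$ and the Sobolev exponent $N/(N-(m-k))$ all lie on the \emph{same} side of $r$, the side determined by the sign of $m+s-N$, while the single value landing exactly at $r$ (namely $q=1$) is only a weak-type bound. There is no bracketing of $r$, so the real-interpolation step you invoke is not available, and when $m+s=N$ the whole family degenerates to the weak estimate at $r$. This is not merely a ``delicate'' technicality to be patched: it is the precise obstruction that makes $p=1$ a genuinely different problem. For $k=0$ it is handled in the literature by a truncation argument on the level sets of $u$ in the spirit of Ledoux, which does not carry over to $k\ge 1$ where $\nabla^k u$ is not the gradient of a truncatable scalar, so the tensor case requires an additional idea you have not supplied. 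As written, your proof establishes \eqref{SB} only for $1<p<\infty$.
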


Here, for any $s>0$, the homogeneous Besov space $\dot{B}_{\infty,\infty}%
^{-s}$ is the space of tempered distributions $u$ in $\mathcal{S}^{\prime
}\left(  \mathbb{R}^{N}\right)  $ verifying
\[
\left\Vert u\right\Vert _{\dot{B}^{-s}}:=\sup_{t>0}\left\{  t^{s/2}||K_{t}\ast
u||_{L^{\infty}}\right\}  <\infty,
\]
and $K_{t}(x)=\frac{e^{-\frac{\left\vert x\right\vert ^{2}}{4t}}}{(4\pi
t)^{N/2}}$ is the standard heat kernel.

Obviously, (\ref{SB}) is an improvement of (\ref{SM}) in the following sense.
First, we recall that a measurable function $u:\mathbb{R}^{N}\rightarrow
\mathbb{R}$ belongs to the Morrey space $M^{p,\alpha}(\mathbb{R}^{N})$ with
$p\in\lbrack1,\infty)$, and $\alpha\in\lbrack0,N]$, if and only if
\[
\left\Vert u\right\Vert _{M^{p,\alpha}}:=\sup_{R>0,x\in\mathbb{R}^{N}}\left(
R^{\alpha-N}\int_{B(x,R)}|u(y)|^{p}dy\right)  ^{1/p}<\infty.
\]

Then, we have the following embeddings (see, for instance \cite{PP14}):

\begin{proposition}
\label{lem 2.2} For any $\alpha\in(0, N)$ and $p>1$, we have

\begin{itemize}
\item[(i)] $M^{1, \alpha}(\mathbb{R}^{N})\hookrightarrow\dot{B}^{-\alpha
}(\mathbb{R}^{N});$

\item[(ii)] $M^{p,\alpha}(\mathbb{R}^{N})\hookrightarrow M^{1,\alpha
/p}(\mathbb{R}^{N}).$
\end{itemize}
\end{proposition}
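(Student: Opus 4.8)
The plan is to prove part (i) directly from the definitions using heat-kernel bounds, and part (ii) as an immediate consequence of Hölder's inequality.

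For part (i), let me think about what needs to be shown. We have u ∈ M^{1,α}, meaning sup_{R>0, x} R^{α-N} ∫_{B(x,R)} |u(y)| dy =: ‖u‖_{M^{1,α}} < ∞. We want to bound t^{α/2} ‖K_t * u‖_{L^∞} uniformly in t > 0. Fix x ∈ ℝ^N and t > 0, and estimate |(K_t * u)(x)| = |∫ K_t(x-y) u(y) dy| ≤ ∫ K_t(x-y) |u(y)| dy. The idea is to decompose ℝ^N into the ball B(x, √t) and the dyadic annuli A_j = B(x, 2^j √t) \ B(x, 2^{j-1}√t) for j ≥ 1. On B(x,√t) we have K_t(x-y) ≲ t^{-N/2}, so that piece is bounded by t^{-N/2} ∫_{B(x,√t)} |u| ≲ t^{-N/2} · (√t)^{N-α} ‖u‖_{M^{1,α}} = t^{-α/2} ‖u‖_{M^{1,α}}. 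On A_j we have |x - y| ≈ 2^j √t, so K_t(x-y) ≲ t^{-N/2} e^{-c 4^j}, and ∫_{A_j} |u| ≤ ∫_{B(x, 2^j √t)} |u| ≲ (2^j √t)^{N-α} ‖u‖_{M^{1,α}}. Multiplying, the j-th term contributes ≲ t^{-N/2} e^{-c4^j} (2^j)^{N-α} t^{(N-α)/2} ‖u‖_{M^{1,α}} = t^{-α/2} 2^{j(N-α)} e^{-c 4^j} ‖u‖_{M^{1,α}}. Since α < N, the factor 2^{j(N-α)} e^{-c4^j} is summable in j, so altogether |(K_t * u)(x)| ≲ t^{-α/2} ‖u‖_{M^{1,α}}, uniformly in x and t. Rearranging gives t^{α/2} ‖K_t * u‖_{L^∞} ≲ ‖u‖_{M^{1,α}}, which is exactly ‖u‖_{Ḃ^{-α}} ≲ ‖u‖_{M^{1,α}}.

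For part (ii), fix R > 0 and x ∈ ℝ^N. By Hölder's inequality with exponents p and p' = p/(p-1),
\[
\int_{B(x,R)} |u(y)| \, dy \le \left( \int_{B(x,R)} |u(y)|^p \, dy \right)^{1/p} |B(x,R)|^{1/p'} \lesssim \left( \int_{B(x,R)} |u(y)|^p \, dy \right)^{1/p} R^{N/p'}.
\]
Therefore
\[
R^{\alpha/p - N} \int_{B(x,R)} |u(y)| \, dy \lesssim R^{\alpha/p - N + N/p'} \left( \int_{B(x,R)} |u(y)|^p \, dy \right)^{1/p} = R^{(\alpha - N)/p} \left( \int_{B(x,R)} |u(y)|^p \, dy \right)^{1/p} \le \|u\|_{M^{p,\alpha}},
\]
using $N/p' - N = -N/p$. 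Taking the supremum over $R > 0$ and $x \in \mathbb{R}^N$ yields $\|u\|_{M^{1,\alpha/p}} \lesssim \|u\|_{M^{p,\alpha}}$, as desired.

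The only mildly delicate point is part (i): one must be careful that the dyadic sum over the annuli converges, which is precisely where the hypothesis $\alpha < N$ is used (the Gaussian decay $e^{-c4^j}$ easily beats the polynomial growth $2^{j(N-\alpha)}$, so convergence is in fact robust). Part (ii) is routine. I expect no genuine obstacle; the main care is bookkeeping the powers of $t$ and $R$ correctly.
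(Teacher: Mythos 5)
Your proof is correct: part (ii) is a direct Hölder estimate with the exponents carefully tracked, and part (i) follows from the dyadic annular decomposition around the center of the heat kernel, where the Gaussian decay absorbs the polynomial growth $2^{j(N-\alpha)}$. The paper does not prove this proposition — it simply cites \cite{PP14} — and your argument is the standard one (and essentially the one in that reference), so there is nothing to flag.
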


\medskip To introduce the main goals of our paper, we now recall the classical
Hardy inequality: For $N\geq3$%
\[
h[u]:=\int_{\mathbb{R}^{N}}|\nabla u|^{2}dx-\dfrac{(N-2)^{2}}{4}\sup
_{y\in\mathbb{R}^{N}}\int_{\mathbb{R}^{N}}\dfrac{|u(x)|^{2}}{|x-y|^{2}}%
dx\geq0,\text{ }\forall u\in D^{1,2}(\mathbb{R}^{N}).
\]
The constant $\dfrac{(N-2)^{2}}{4}$ is optimal, but cannot be achieved by
nontrivial functions. It is also worth noting that the Hardy type
inequalities, together with the Sobolev type inequalities, are among the most
used inequalities in analysis and partial differential equations. There is
vast literature on the Hardy type inequalities and their applications.
Therefore, the interested reader is referred to the monographs \cite{BEL, GM1,
KP, OK}, to name just a few, that are by now standard references on Hardy type inequalities.

Since the operator $-\Delta-\dfrac{(N-2)^{2}}{4}\frac{1}{\left\vert
x\right\vert ^{2}}$ is nonnegative and is critical, it is also very
interesting and challenging to study the Sobolev interpolation inequalities
with the quadratic form $h[u]$. In this direction, Brezis and V\'{a}zquez in
\cite{BV97} and V\'{a}zquez and Zuazua in \cite{VZ00} proved that
\[
h[u]^{\theta}\left\Vert u\right\Vert _{2}^{2\left(  1-\theta\right)  }%
\gtrsim\left\Vert u\right\Vert _{q}^{2}%
\]
for $N\geq3$, $2<q<$ $\frac{2N}{N-2}$ and $\theta=N\left(  \frac{1}{2}%
-\frac{1}{q}\right)  $. We note that the condition $q<\frac{2N}{N-2}$ in the
above interpolation inequalities is necessary and cannot be improved to the
critical case $q=\frac{2N}{N-2}$. This result has also been extended by Frank
in \cite{Fra09} that%
\[
h[u]^{\theta}\left\Vert u\right\Vert _{2}^{2\left(  1-\theta\right)  }%
\gtrsim\left\Vert \left(  -\Delta\right)  ^{\frac{q}{2}}u\right\Vert _{2}%
^{2}.
\]
See also \cite{MNN21} for related results.

In an effort to investigate critical Sobolev interpolation inequalities with
the quadratic form $h[u]$, Dietze and Nam proved in a recent paper \cite{DN23}
the following Hardy-Sobolev interpolation inequalities:

\begin{theorema}
\label{Thm 1.1}(1) If $N\geq3$ and $\theta=\frac{1}{N}$, then
\begin{equation}
\left(  \int_{\mathbb{R}^{N}}|\nabla u|^{2}dx-\dfrac{(N-2)^{2}}{4}%
\int_{\mathbb{R}^{N}}\dfrac{|u(x)|^{2}}{|x|^{2}}dx\right)  ^{\theta}\left(
\int_{\mathbb{R}^{N}}\dfrac{|u(x)|^{2}}{|x|^{2}}dx\right)  ^{1-\theta}\geq
C\left\Vert u\right\Vert _{L^{\frac{2N}{N-2}}}^{2} \label{HSr}%
\end{equation}
holds with a constant $C=C(N,\theta)>0$ independent of $u\in D_{\text{rad}%
}^{1,2}(\mathbb{R}^{N})$. Moreover, \eqref{HSr} does not hold if $\theta
\neq1/N$.

(2) If $N=3$ and $\theta=\frac{1}{3}$, then the inequality
\begin{equation}
\left(  \int_{\mathbb{R}^{N}}|\nabla u|^{2}dx-\dfrac{(N-2)^{2}}{4}\sup
_{y\in\mathbb{R}^{N}}\int_{\mathbb{R}^{N}}\dfrac{|u(x)|^{2}}{|x-y|^{2}%
}dx\right)  ^{\theta}\left(  \sup_{y\in\mathbb{R}^{N}}\int_{\mathbb{R}^{N}%
}\dfrac{|u(x)|^{2}}{|x-y|^{2}}dx\right)  ^{1-\theta}\geq C\left\Vert
u\right\Vert _{L^{\frac{2N}{N-2}}}^{2} \label{HS}%
\end{equation}
holds with a constant $C=C(N,\theta)>0$ independent of $u\in D^{1,2}%
(\mathbb{R}^{N})$. Furthermore, \eqref{HS} does not hold if $N\geq4$ or
$\theta\neq1/3$.
\end{theorema}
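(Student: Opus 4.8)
The plan is to reduce the radial inequality in (1) to a one‑dimensional Gagliardo--Nirenberg inequality by the ground state substitution, and then to deduce the non‑radial inequality in (2) from (1) by Schwarz symmetrization together with a short calculus dichotomy. For radial $u$ I would carry out the ground state representation $u(x)=|x|^{-\frac{N-2}{2}}v(x)$ and change variables to $t=\log|x|$: writing $u(e^{t})=e^{-\frac{N-2}{2}t}f(t)$ and $\omega_{N-1}=|\mathbb{S}^{N-1}|$, an integration by parts disposing of the cross term $\int_{\mathbb{R}}ff'\,dt=0$ gives $\int_{\mathbb{R}^{N}}|\nabla u|^{2}dx-\frac{(N-2)^{2}}{4}\int_{\mathbb{R}^{N}}\frac{|u|^{2}}{|x|^{2}}dx=\omega_{N-1}\int_{\mathbb{R}}|f'|^{2}dt$, $\int_{\mathbb{R}^{N}}\frac{|u|^{2}}{|x|^{2}}dx=\omega_{N-1}\int_{\mathbb{R}}|f|^{2}dt$ and $\int_{\mathbb{R}^{N}}|u|^{\frac{2N}{N-2}}dx=\omega_{N-1}\int_{\mathbb{R}}|f|^{\frac{2N}{N-2}}dt$. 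After cancelling $\omega_{N-1}$, \eqref{HSr} becomes exactly $\|f'\|_{L^{2}(\mathbb{R})}^{2\theta}\|f\|_{L^{2}(\mathbb{R})}^{2(1-\theta)}\ge c\,\|f\|_{L^{2N/(N-2)}(\mathbb{R})}^{2}$; the scaling $f\mapsto f(\lambda\,\cdot)$ shows the only admissible exponent is $\theta=\tfrac1N$, and for that value it is the classical one‑dimensional Gagliardo--Nirenberg inequality (fundamental theorem of calculus and H\"{o}lder). Running the same scaling with a fixed $f$ as $\lambda\to0$ or $\lambda\to\infty$ when $\theta\ne1/N$ makes the left side of \eqref{HSr} arbitrarily small relative to the right, which proves the last assertion of (1); this is the nonlinear ground state representation of \cite{DN23} in the radial regime.

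For the optimality in (2) I would argue as follows. Restricted to radially decreasing $u$, the Riesz rearrangement inequality forces $\sup_{y}\int|u|^{2}/|x-y|^{2}=\int|u|^{2}/|x|^{2}$, so the spreading test functions above, which may be taken radially decreasing, still defeat \eqref{HS} unless $\theta\le1/N$. Thin radial shells $u=|x|^{-\frac{N-2}{2}}f\big(w^{-1}\log|x|\big)$ have $\int|\nabla u|^{2}\asymp 1/w$, $\sup_{y}\int|u|^{2}/|x-y|^{2}\asymp w$ (with a harmless extra factor $\log(1/w)$ when $N=3$, coming from the borderline integrability of the tangential directions of the shell) and $\|u\|_{2N/(N-2)}^{2}\asymp w^{(N-2)/N}$, so letting $w\to0$ in \eqref{HS} forces $\theta\ge1/N$; hence $\theta=1/N$. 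Finally, a superposition $u=\sum_{i=1}^{M}\phi(\cdot-x_{i})$ of a fixed bump along widely separated points has $\int|\nabla u|^{2}\asymp M$, $\sup_{y}\int|u|^{2}/|x-y|^{2}\asymp 1$ (the supremum only sees one bump) and $\|u\|_{2N/(N-2)}^{2}\asymp M^{(N-2)/N}$, forcing $\theta\ge1-2/N$ as $M\to\infty$. The two requirements $\theta=1/N$ and $\theta\ge1-2/N$ are compatible only for $N\le3$, so \eqref{HS} fails for all $\theta$ when $N\ge4$, and only $\theta=1/3$ can survive when $N=3$.

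It then remains to prove \eqref{HS} for $N=3$, $\theta=1/3$. I would set $G=\int_{\mathbb{R}^{3}}|\nabla u|^{2}$ and $B=\sup_{y}\int_{\mathbb{R}^{3}}|u|^{2}/|x-y|^{2}$; the sharp Hardy inequality centered at any point gives $B\le 4G$, the left side of \eqref{HS} is $\Phi(B)$ with $\Phi(t)=(G-\tfrac14t)^{1/3}t^{2/3}$, and a one‑line computation shows $\Phi$ is increasing on $[0,\tfrac83G]$ and decreasing on $[\tfrac83G,4G]$. If $B<\tfrac83G$ then $G-\tfrac14B>\tfrac13G$, and since $\frac1{R^{2}}\int_{B_{R}(y)}|u|^{2}\le\int|u|^{2}/|x-y|^{2}$, the Morrey--Gagliardo--Nirenberg inequality that follows from \eqref{SM} gives $G^{1/3}B^{2/3}\ge C_{1}\|u\|_{6}^{2}$, whence $\Phi(B)\ge 3^{-1/3}C_{1}\|u\|_{6}^{2}$. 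If $B\ge\tfrac83G$, I would translate so that $\int|u|^{2}/|x|^{2}=B$ and put $w=u^{*}$: then $\|w\|_{6}=\|u\|_{6}$, P\'{o}lya--Szeg\H{o} gives $\int|\nabla w|^{2}\le G$, the Riesz inequality gives $\widetilde B:=\int|w|^{2}/|x|^{2}\ge B$, and Hardy gives $\widetilde B\le 4\int|\nabla w|^{2}\le 4G$. By \eqref{HSr} applied to $w$ together with $\int|\nabla w|^{2}\le G$, $\Phi(\widetilde B)=(G-\tfrac14\widetilde B)^{1/3}\widetilde B^{2/3}\ge\big(\int|\nabla w|^{2}-\tfrac14\widetilde B\big)^{1/3}\widetilde B^{2/3}\ge C_{0}\|u\|_{6}^{2}$; since $\tfrac83G\le B\le\widetilde B\le 4G$ and $\Phi$ is decreasing there, $\Phi(B)\ge\Phi(\widetilde B)\ge C_{0}\|u\|_{6}^{2}$. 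This yields \eqref{HS} with $C=\min\{C_{0},3^{-1/3}C_{1}\}$.

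The heart of the matter — and the main obstacle — is this last step: Schwarz symmetrization cannot be applied to \eqref{HS} verbatim, because it decreases the Hardy deficit $G-\tfrac14B$ while increasing the Hardy term $B$, moving the two factors of the product in opposite directions. The device that bypasses this is the unimodality of $\Phi$, whose maximum sits at $t=4(1-\theta)G$: in the near‑extremal regime where $B$ is close to $4G$ the increase of $B$ under symmetrization actually helps, since one lands on the decreasing branch of $\Phi$, whereas in the complementary regime the sharp Hardy correction is a fixed fraction of the Dirichlet energy and the uncorrected Morrey--Gagliardo--Nirenberg inequality already suffices. I expect both branches of this dichotomy to work only for $N=3$: the maximum $4(1-\theta)G$ of $\Phi$ falls inside the Hardy‑admissible interval $[0,4G/(N-2)^{2}]$ precisely when $(1-\theta)(N-2)^{2}\le1$, and the Morrey--Gagliardo--Nirenberg inequality of \eqref{SM} is available with exponent $\theta=1/N$ precisely when $1/N\ge1-2/N$; both conditions force $N\le3$, which is exactly the range in which \eqref{HS} can hold.
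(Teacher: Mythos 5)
Your proof is correct and follows, in all essential steps, the original argument of Dietze and Nam in \cite{DN23}, which is the source this theorem is cited from (the paper does not reprove it) and whose structure the paper reuses for the higher-order analogues in Theorems \ref{T3} and \ref{T1}: the ground state substitution reducing the radial case to a one-dimensional Gagliardo--Nirenberg/Caffarelli--Kohn--Nirenberg inequality, and a dichotomy at the interior maximum of $\Phi(t)=(G-\tfrac14t)^{\theta}t^{1-\theta}$, handled via the Morrey--Gagliardo--Nirenberg estimate \eqref{SM} on the increasing branch and via P\'{o}lya--Szeg\H{o} plus Riesz rearrangement on the decreasing branch. The necessity side (thin shells, spread-out scalings, widely separated bumps) is also the standard \cite{DN23} scheme, and your closing discussion of why both branches of the dichotomy simultaneously require $N\le 3$ is an accurate account of where the dimensional restriction comes from.

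Two minor technical points worth smoothing: the step ``translate so that $\int |u|^{2}/|x|^{2}=B$'' should be an approximation in general (the supremum over $y$ is attained for compactly supported $u$, and one passes to $D^{1,2}$ by density); and for $N=3$ the logarithmic enhancement of $\sup_{y}\int|u|^{2}/|x-y|^{2}$ on thin shells only makes the left-hand side of \eqref{HS} larger, so, as you note, it does not disturb the lower bound $\theta\ge 1/N$.
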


Here $D_{\text{rad}}^{k,2}(\mathbb{R}^{N}):=\left\{  u\in D^{k,2}%
(\mathbb{R}^{N}):u\text{ is radially symmetric}\right\}  $.

The significance of the above results of Dietze and Nam is that the first
terms of \eqref{HSr} and \eqref{HS} are the quadratic form of the Hardy
inequality $h[u]$, and that we can improve the RHS of the Sobolev
interpolations to the critical Sobolev norm $\left\Vert u\right\Vert
_{L^{\frac{2N}{N-2}}}$. However, we note that \eqref{HS} holds for the special
case $N=\frac{1}{\theta}=3$ only. Also, \eqref{HSr} only holds for radial
functions and with $\theta=\frac{1}{N}$.

\medskip

The principal goals of our paper are to extend the results of Dietze and Nam
to the higher order cases. To serve our purposes, we now recall the higher
order Hardy inequalities, namely, the Hardy-Rellich inequalities:

\begin{theorema}
\label{Thm 1.3}Let $0<k<\frac{N}{2}$ be an integer. We have for all $u\in
D^{k,2}(\mathbb{R}^{N})$,
\begin{align*}
\int_{\mathbb{R}^{N}}|\Delta^{m}u|^{2}dx  &  \geq\dfrac{N^{2}}{4}%
\int_{\mathbb{R}^{N}}\dfrac{|\nabla\Delta^{m-1}u|^{2}}{|x|^{2}}dx\geq
\frac{4C_{1}(N,m)}{\left(  N-4m\right)  ^{2}}\int_{\mathbb{R}^{N}}%
\dfrac{|\nabla u|^{2}}{|x|^{4m-2}}dx\\
&  \geq C_{1}(N,m)\int_{\mathbb{R}^{N}}\dfrac{|u|^{2}}{|x|^{4m}}dx
\end{align*}
if $k=2m$, and
\begin{align*}
\int_{\mathbb{R}^{N}}|\nabla\Delta^{m}u|^{2}dx  &  \geq\dfrac{\left(
N-2\right)  ^{2}}{4}\int_{\mathbb{R}^{N}}\frac{|\Delta^{m}u|^{2}}{|x|^{2}%
}dx\geq\frac{4C_{2}(N,m)}{\left(  N-4m\right)  ^{2}}\int_{\mathbb{R}^{N}%
}\dfrac{|\nabla u|^{2}}{|x|^{4m-2}}dx\\
&  \geq C_{2}(N,m)\int_{\mathbb{R}^{N}}\dfrac{|u|^{2}}{|x|^{4m+2}}dx
\end{align*}
if $k=2m+1$. Here the constants%
\[
C_{1}(N,m)=\left(  \prod_{i=0}^{m-1}\dfrac{(N+4i)(N-4-4i)}{4}\right)  ^{2}%
\]
and
\[
C_{2}(N,m)=\left(  \dfrac{N-2}{2}\right)  ^{2}\left(  \prod_{i=0}^{m-1}%
\dfrac{(N+2+4i)(N-6-4i)}{4}\right)  ^{2}%
\]
are sharp.
\end{theorema}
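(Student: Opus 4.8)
The plan is to prove both chains one link at a time, to derive each link from the single one‑dimensional Hardy inequality after passing to spherical harmonics, and to check at the end that the product of the constants along each chain telescopes to $C_{1}(N,m)$, respectively $C_{2}(N,m)$. One may assume $u\in C_{0}^{\infty}(\mathbb{R}^{N})$ and pass to $D^{k,2}(\mathbb{R}^{N})$ by density. Write $u(x)=\sum_{\ell\geq0}\sum_{\mu}f_{\ell,\mu}(|x|)\,Y_{\ell,\mu}(x/|x|)$ with $\{Y_{\ell,\mu}\}_{\mu}$ an $L^{2}(\mathbb{S}^{N-1})$‑orthonormal basis of degree‑$\ell$ spherical harmonics, $-\Delta_{\mathbb{S}^{N-1}}Y_{\ell,\mu}=\ell(\ell+N-2)Y_{\ell,\mu}$; then $\Delta(g\,Y_{\ell,\mu})=(\mathcal{L}_{\ell}g)Y_{\ell,\mu}$ with $\mathcal{L}_{\ell}g=g''+\frac{N-1}{r}g'-\frac{\ell(\ell+N-2)}{r^{2}}g$, and, by orthonormality, every integral in the statement splits into a sum over $(\ell,\mu)$ of one‑dimensional integrals against $r^{N-1}\,dr$. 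The key observation is the substitution $g=r^{\ell}h$, under which $\mathcal{L}_{\ell}(r^{\ell}h)=r^{\ell}\,\mathcal{L}_{0}^{(N+2\ell)}h$, where $\mathcal{L}_{0}^{(M)}h=h''+\frac{M-1}{r}h'$ is the radial Laplacian in dimension $M$: this identifies the $\ell$‑th mode of each of our inequalities in dimension $N$ with the purely radial inequality in dimension $N+2\ell$ (modulo lower‑order angular contributions, controlled using $N\geq5$, which is automatic here since $2m<N/2$). Since $C_{1}(\,\cdot\,,m)$ and $C_{2}(\,\cdot\,,m)$ are visibly increasing in the dimension over the admissible range, it suffices to prove the radial chains in dimension $N$; the general case then follows mode by mode with the same or larger constants.

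Everything thus reduces to one‑variable inequalities, all of which I would read off from the one‑dimensional Hardy inequality $\int_{0}^{\infty}r^{\beta}|g'|^{2}\,dr\geq\big(\tfrac{\beta-1}{2}\big)^{2}\int_{0}^{\infty}r^{\beta-2}|g|^{2}\,dr$ (for $\beta\neq1$ and $g$ with the appropriate decay, which the profiles at hand have). Expanding $(\mathcal{L}_{0}g)^{2}$ and integrating the cross term by parts gives $\int_{0}^{\infty}r^{N-1-2a}(\mathcal{L}_{0}g)^{2}\,dr=\int_{0}^{\infty}r^{N-1-2a}(g'')^{2}\,dr+(N-1)(2a+1)\int_{0}^{\infty}r^{N-3-2a}(g')^{2}\,dr$, and one application of the one‑dimensional Hardy inequality to $g'$ collapses the right‑hand side to $\tfrac{(N+2a)^{2}}{4}\int_{0}^{\infty}r^{N-3-2a}(g')^{2}\,dr$; in $\mathbb{R}^{N}$ (radial case) this is the sharp weighted Rellich‑to‑gradient inequality $\int|x|^{-2a}|\Delta v|^{2}\geq\tfrac{(N+2a)^{2}}{4}\int|x|^{-2a-2}|\nabla v|^{2}$, whose $a=0$ case $\int|\Delta v|^{2}\geq\tfrac{N^{2}}{4}\int\frac{|\nabla v|^{2}}{|x|^{2}}$ is the leading link for $k=2m$. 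The same one‑dimensional Hardy inequality also yields the weighted Hardy inequality $\int|x|^{-2a}|\nabla v|^{2}\geq\big(\tfrac{N-2-2a}{2}\big)^{2}\int|x|^{-2a-2}|v|^{2}$, whose $a=0$ case is the classical Hardy inequality recalled in the Introduction.

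In this radial setting the chains assemble mechanically. For $k=2m$: the first link applies the sharp Rellich‑to‑gradient inequality to $v=\Delta^{m-1}u$ (constant $\tfrac{N^{2}}{4}$); the middle link peels the remaining $m-1$ Laplacians off $\nabla\Delta^{m-1}u$ by performing $m-1$ cycles, each consisting of a weighted Hardy step followed by a Rellich‑to‑gradient step — each cycle removing one $\Delta$ and raising the weight exponent by $4$, so that after the cycles one stands at $\int|x|^{-(4m-2)}|\nabla u|^{2}$; the last link is one further weighted Hardy step, reaching $\int|x|^{-4m}|u|^{2}$. For $k=2m+1$ the structure is the same, with the leading link now the classical Hardy inequality applied to $v=\Delta^{m}u$ and one extra Rellich‑to‑gradient step inserted before the final weighted Hardy step. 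A direct bookkeeping computation — using $\Delta(r^{-\sigma})=\sigma(\sigma+2-N)r^{-\sigma-2}$ to track exponents and constants — shows that the constants of the $m-1$ cycles multiply to $\big(\prod_{i=1}^{m-1}\tfrac{(N+4i)(N-4i)}{4}\big)^{2}$, so that $\tfrac{N^{2}}{4}\cdot\big(\prod_{i=1}^{m-1}\tfrac{(N+4i)(N-4i)}{4}\big)^{2}\cdot\tfrac{(N-4m)^{2}}{4}=C_{1}(N,m)$, and the analogous identity holds for $C_{2}(N,m)$. Sharpness of the constants follows by testing with the radial almost‑optimizers $u_{\varepsilon}(x)=|x|^{-\tau}\eta_{\varepsilon}(|x|)$, where $\tau$ is the borderline exponent ($\tau=\tfrac{N-4m}{2}$ when $k=2m$) and $\eta_{\varepsilon}$ is a logarithmic cutoff: as $\varepsilon\to0$ the leading contributions to both sides are comparable multiples of $|\log\varepsilon|$ whose quotient is exactly the asserted constant.

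The part that demands care is the constant bookkeeping through the $m-1$ cycles: identifying the correct weight exponent entering each weighted Hardy / Rellich‑to‑gradient step and confirming that the resulting telescoping product is exactly the one built into the closed forms of $C_{1}(N,m)$ and $C_{2}(N,m)$ — note in particular that the naive weighted Rellich‑to‑gradient constant $\tfrac{(N+2a)^{2}}{4}$ is the correct sharp one only in the radial setting (for general functions the $\ell=1$ spherical‑harmonic mode forces a smaller constant once $a$ grows), which is exactly why the reduction of the first paragraph, rather than a direct componentwise argument, is needed. A secondary point is to keep track, in that reduction, of the behavior of each mode's profile at $0$ and $\infty$ so that the integrations by parts are free of boundary terms and the one‑dimensional Hardy constants are sharp; the substitution $g=r^{\ell}h$ handles this automatically by turning every mode into a genuine radial problem in an integer dimension $N+2\ell>2k$, where all the one‑step inequalities hold with the stated constants. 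With this in place, no tool beyond the one‑dimensional Hardy inequality is actually used.
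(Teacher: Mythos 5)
The paper does not prove Theorem~\ref{Thm 1.3}; it is cited from the literature (the sentence right after the statement points to \cite{DH98, GLMW18, Her77, Yav99}), so there is no in-paper proof against which to compare. Evaluating your attempt on its own merits: the purely radial half is correct and cleanly organized. In particular, your expansion of $(\mathcal{L}_0 g)^2$, the resulting constant $\tfrac{(N+2a)^2}{4}$ for the radial weighted Rellich-to-gradient step, the weighted Hardy constant $\tfrac{(N-2-2a)^2}{4}$, the cycle structure (Hardy then Rellich-to-gradient, raising the weight by $4$), and the telescoping identity
\[
\frac{N^2}{4}\cdot\prod_{i=1}^{m-1}\left(\frac{(N-4i)(N+4i)}{4}\right)^2\cdot\frac{(N-4m)^2}{4}
=\left(\prod_{i=0}^{m-1}\frac{(N+4i)(N-4-4i)}{4}\right)^2=C_1(N,m)
\]
all check out, as does the analogous bookkeeping for $C_2(N,m)$. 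The sharpness argument via radial logarithmic cutoffs also works once the other pieces are in place.

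The genuine gap is in the passage from radial to general $u$. You claim that after expanding in spherical harmonics and substituting $g=r^\ell h$, mode $\ell$ of each inequality becomes the radial inequality in dimension $M=N+2\ell$ ``modulo lower-order angular contributions, controlled using $N\geq5$,'' and then conclude by monotonicity of $C_i(\cdot,m)$ in the dimension. This does not go through as stated because the reduction is \emph{not} the radial problem in dimension $M$ for the quantities in the chain that contain a gradient. For example, for the first link, mode $\ell$ turns into
\[
\int_0^\infty \bigl|\mathcal{L}_0^{(M),m}h\bigr|^2 r^{M-1}\,dr
\;\geq\;\frac{N^2}{4}\left[\int_0^\infty |H'|^2 r^{M-3}\,dr + 2\ell\int_0^\infty |H|^2 r^{M-5}\,dr\right],
\quad H=\mathcal{L}_0^{(M),m-1}h,
\]
and the extra positive term $2\ell\int |H|^2 r^{M-5}\,dr$ sits on the \emph{right-hand} side, making the mode-$\ell$ inequality strictly harder than the radial inequality in dimension $M$. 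Monotonicity of the radial constant in the dimension is therefore not enough; you must show that the surplus $\tfrac{M^2}{4}-\tfrac{N^2}{4}=\ell(N+\ell)$ together with a further one-dimensional Hardy step swallows the angular term, which amounts to verifying $(N+\ell)(N+2\ell-4)^2\geq 2N^2$ for all $\ell\geq1$; the binding case is $\ell=1$, giving $N^3-5N^2+4\geq0$, i.e.\ $N\geq5$. This verification is precisely the ``control'' you invoke but never carry out, and it is the crux of why the chain holds with the stated constants for general $u$ in the stated dimension range. (Note also that the third link behaves oppositely — there the angular term lands on the favorable side and the mode-$\ell$ inequality is actually easier — so the situation is not uniform across the links; each one requires its own check.) Until this is filled in, the non-radial part of the proof is incomplete, and with it the assertion that $C_1(N,m)$, $C_2(N,m)$ are the sharp constants rather than merely the radial ones.
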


These Hardy-Rellich type inequalities can be found in \cite{DH98, GLMW18,
Her77, Yav99}, for instance.

\medskip

Motivated by the results of Dietze and Nam in \cite{DN23}, the first main goal
of this paper is to establish the following critical Rellich-Sobolev
interpolation inequality with the critical quadratic forms of the Rellich
inequalities for radial functions:

\begin{theorem}
\label{T3}Let $m\in\mathbb{N}$, $N>4m$, $\frac{1}{N}\leq\theta\leq\frac{2m}%
{N}$ and $C_{1}(N,m)$ be the optimal constant defined in Theorem
\ref{Thm 1.3}, we have
\begin{equation}
\left(  \int_{\mathbb{R}^{N}}|\Delta^{m}u|^{2}dx-C_{1}(N,m)\int_{\mathbb{R}%
^{N}}\dfrac{|u|^{2}}{|x|^{4m}}dx\right)  ^{\theta}\left(  \int_{\mathbb{R}%
^{N}}\dfrac{|u|^{2}}{|x|^{4m}}dx\right)  ^{1-\theta}\geq C\left\Vert
u\right\Vert _{L^{\frac{2N}{N-4m}}}^{2} \label{2m-1i}%
\end{equation}
holds with a constant $C=C(N,m,\theta)>0$ independent of $u\in D_{\text{rad}%
}^{2m,2}(\mathbb{R}^{N})$.

Similarly, with $N>4m+2$, $\frac{1}{N}\leq\theta\leq\frac{2m+1}{N}$ and
$C_{2}(N,m)$ be the sharp constant defined in Theorem \ref{Thm 1.3}, we have
\begin{equation}
\left(  \int_{\mathbb{R}^{N}}|\nabla\Delta^{m}u|^{2}dx-C_{2}(N,m)\int
_{\mathbb{R}^{N}}\dfrac{|u|^{2}}{|x|^{4m+2}}dx\right)  ^{\theta}\left(
\int_{\mathbb{R}^{N}}\dfrac{|u|^{2}}{|x|^{4m+2}}dx\right)  ^{1-\theta}\geq
C\left\Vert u\right\Vert _{L^{\frac{2N}{N-4m-2}}}^{2} \label{odd-1i}%
\end{equation}
holds with a constant $C=C(N,m,\theta)>0$ independent of $u\in D_{\text{rad}%
}^{2m+1,2}(\mathbb{R}^{N})$.
\end{theorem}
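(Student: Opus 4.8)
The plan is to reduce both cases to a one–dimensional weighted inequality via the "ground state substitution" (Rellich factorization) and then interpolate. Fix the case $k=2m$; the case $k=2m+1$ is entirely parallel. First I would recall that the sharp Hardy–Rellich inequality of Theorem 1.3 comes, after passing to radial coordinates, from factoring the operator: writing $u(x)=|x|^{-\frac{N-4m}{2}}v(x)$ (the power of the virtual ground state $|x|^{-(N-4m)/2}$ of $\Delta^{2m}-C_1(N,m)|x|^{-4m}$), one obtains an identity of the form
\[
\int_{\mathbb{R}^N}|\Delta^m u|^2\,dx - C_1(N,m)\int_{\mathbb{R}^N}\frac{|u|^2}{|x|^{4m}}\,dx
= \int_{\mathbb{R}^N} |x|^{-(N-4m)}\, Q\!\left[v\right]\,dx,
\]
where $Q[v]$ is a nonnegative quadratic form in the derivatives of $v$; in the change of variable $t=\log|x|$ this becomes a constant–coefficient quadratic form on the half–line (for radial $v$), and the remaining weighted $L^2$ term $\int |u|^2|x|^{-4m}$ becomes $\int_{\mathbb{R}} |v|^2\,dt$ (up to the surface constant $\omega_{N-1}$). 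The critical Sobolev norm transforms as $\|u\|_{L^{2N/(N-4m)}}^2 = \omega_{N-1}^{2/N}\,\| |x|^{-(N-4m)/2} v\|_{L^{2N/(N-4m)}}^2$, which after the substitution $t=\log|x|$ is exactly a fixed multiple of $\|v\|_{L^{2N/(N-4m)}(\mathbb{R},dt)}^2$ with the Lebesgue measure — the weight is absorbed precisely because $\tfrac{N-4m}{2}\cdot\tfrac{2N}{N-4m}=N$.

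Having rewritten everything in the variable $t\in\mathbb{R}$, the inequality \eqref{2m-1i} becomes: for a one–variable function $v$ on $\mathbb{R}$,
\[
\left(\int_{\mathbb{R}} Q[v]\,dt\right)^{\theta}\left(\int_{\mathbb{R}}|v|^2\,dt\right)^{1-\theta}\gtrsim \|v\|_{L^{2N/(N-4m)}(\mathbb{R})}^2 .
\]
Since $Q[v]\,dt$ dominates $\int |v'|^2\,dt$ (this is where I use the chain of inequalities in Theorem 1.3: the Hardy–Rellich deficit controls the first–order term $\int |\nabla u|^2 |x|^{-(4m-2)}$, which in the $t$–variable is $\gtrsim \int|v'|^2\,dt$ modulo lower order terms), it suffices to prove the scalar Gagliardo–Nirenberg inequality on $\mathbb{R}$:
\[
\left(\int_{\mathbb{R}}|v'|^2\,dt\right)^{\theta}\left(\int_{\mathbb{R}}|v|^2\,dt\right)^{1-\theta}\gtrsim \|v\|_{L^{p}(\mathbb{R})}^2,
\qquad p=\frac{2N}{N-4m}.
\]
By the one–dimensional Gagliardo–Nirenberg inequality $\|v\|_{L^p(\mathbb{R})}\lesssim \|v'\|_{L^2}^{a}\|v\|_{L^2}^{1-a}$ with $a=\tfrac12-\tfrac1p$, this holds precisely when $\theta=a$, i.e. $\theta=\tfrac12-\tfrac{N-4m}{4N}=\tfrac{N+4m}{4N}$; more generally, once we have it for one exponent $\theta$, interpolating the two factors with Hölder and using that $\int Q[v]\gtrsim \int|v|^2$ as well (again from Theorem 1.3, the deficit also dominates a copy of $\int|v|^2$ after localizing, or by scaling) lets us trade powers and cover the whole advertised range $\tfrac1N\le\theta\le\tfrac{2m}{N}$. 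I would also record the endpoint scaling check $u_\lambda(x)=u(\lambda x)$: both sides of \eqref{2m-1i} must scale the same way, which forces exactly the displayed relation between $\theta$ and the exponent $2N/(N-4m)$ and explains why no single $\theta$ outside the interpolation band works for all $u$.

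The main obstacle is establishing the factorization identity cleanly, i.e. showing that the Hardy–Rellich deficit $\int|\Delta^m u|^2 - C_1(N,m)\int |u|^2|x|^{-4m}$ is, for radial $u$, equal to (or bounded below by a fixed multiple of) a one–dimensional Dirichlet–type energy $\int_{\mathbb{R}}|v'|^2\,dt$ after the ground–state substitution and the $t=\log|x|$ change of variables. For $m=1$ this is the classical iterated-Hardy computation, but for general $m$ one must iterate the Rellich factorization $m$ times, tracking the constants $C_1(N,m)$ (the product formula in Theorem 1.3 is exactly the record of this iteration) and verifying that at each stage the leftover quadratic form stays nonnegative and, crucially, retains control of the top first–order term so that the telescoping ends with a genuine $\int|v'|^2$ rather than degenerating. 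The higher–order Talenti comparison principle and the Fourier-analytic tools mentioned in the abstract are what make this rigorous in the non-radial parts of the paper, but for the radial statement here the substitution-plus-scalar-GN route is the cleanest, and the delicate point is purely the bookkeeping of the $m$-fold factorization and the sharpness of $C_1(N,m)$ guaranteeing that the deficit does not collapse.
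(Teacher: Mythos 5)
Your proposal, after fixing the arithmetic slip, essentially reproduces only the \emph{upper} endpoint $\theta=\tfrac{2m}{N}$, and the interpolation step you sketch cannot reach the full range $\tfrac1N\le\theta\le\tfrac{2m}{N}$. Concretely, with $p=\tfrac{2N}{N-4m}$ you have $\tfrac1p=\tfrac{N-4m}{2N}$ (not $\tfrac{N-4m}{4N}$), so the one--dimensional Gagliardo--Nirenberg exponent is $a=\tfrac12-\tfrac1p=\tfrac{2m}{N}$, i.e.\ exactly the upper endpoint of the allowed range, not $\tfrac{N+4m}{4N}$. Your route --- descend the Rellich deficit down to the first--order weighted Hardy deficit, substitute $f=r^{(N-4m)/2}u$, change variables to $t=\log r$, and apply a scalar GN/Nagy inequality --- is precisely what the paper does in its Case 2, and this inevitably pins $\theta$ to $\tfrac{2m}{N}$.

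The missing half is the endpoint $\theta=\tfrac1N$, which in the paper requires a genuinely different reduction: one does not push all the way down to $\int |u|^2|x|^{-4m}\,dx$, but stops at the intermediate term $\tfrac{N^2}{4}\int\frac{|\nabla\Delta^{m-1}u|^2}{|x|^2}\,dx$ in the Rellich chain, using the monotonicity of $x\mapsto(A-x)^\theta x^{1-\theta}$ on $[(1-\theta)A,A]$ to enlarge the middle factor. Setting $v=\Delta^{m-1}u$, this leads to a \emph{second--order} one--dimensional inequality for $w=v'/r$ (rewritten via $f=wr^{N/2}$), which is the inequality from Dietze--Nam and has $\theta=\tfrac1N$. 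A separate subcase (when $(1-\theta)A\ge B$) is handled through Theorem \ref{T3.2} and a one--dimensional Caffarelli--Kohn--Nirenberg inequality. Nothing in your outline touches this endpoint. Also, your claim that Hölder plus ``$\int Q[v]\gtrsim\int|v|^2$'' lets you ``trade powers'' to cover the whole band goes the wrong way: if you know the inequality at $\theta_0=\tfrac{2m}{N}$ and want $\theta<\theta_0$, you would need $\int|v|^2\gtrsim\int Q[v]$, which is false by scaling. The paper instead proves both endpoints independently and interpolates between them.

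Finally, a word of caution on the ``ground--state factorization'' you invoke: writing the full Rellich deficit $\int|\Delta^m u|^2-C_1(N,m)\int|u|^2|x|^{-4m}$ as $\int|x|^{-(N-4m)}Q[v]$ after $u=|x|^{-(N-4m)/2}v$ is precisely the nonlinear ground state representation for the Rellich operator, which the paper explicitly notes does not exist in the literature for higher order. The paper works around this by using the full chain of Hardy--Rellich inequalities in Theorem \ref{Thm 1.3} to reduce to a genuine first--order deficit before substituting, together with the Fourier--theoretic Theorems \ref{T3.1} and \ref{T3.2} for the complementary subcase. Your outline treats this factorization as available and thereby skips the step that the entire higher--order setup is built to replace.
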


The inequality (\ref{2m-1i}) implies that
\[
\left(  \int_{\mathbb{R}^{N}}|\Delta^{m}u|^{2}dx\right)  ^{\theta}\left(
\int_{\mathbb{R}^{N}}\dfrac{|u|^{2}}{|x|^{4m}}dx\right)  ^{1-\theta}%
\gtrsim\left\Vert u\right\Vert _{L^{\frac{2N}{N-4m}}}^{2}.
\]
Since $u$ is radial, $\int_{\mathbb{R}^{N}}|\Delta^{m}u|^{2}dx\lesssim\int
_{0}^{\infty}\left\vert u^{\left(  2m\right)  }\right\vert ^{2}r^{N-1}dr$.
This implies that
\[
\left(  \int_{0}^{\infty}\left\vert u^{\left(  2m\right)  }\right\vert
^{2}r^{N-1}dr\right)  ^{\theta}\left(  \int_{0}^{\infty}|u|^{2}r^{N-1-4m}%
dr\right)  ^{1-\theta}\gtrsim\left(  \int_{0}^{\infty}|u|^{\frac{2N}{N-4m}%
}r^{N-1}dr\right)  ^{\frac{N-4m}{N}}.
\]
This is a special case of the following higher order Caffarelli-Kohn-Nirenberg
inequality (see \cite{Lin86}, for instance):

\begin{propositiona}
[Higher order Caffarelli-Kohn-Nirenberg inequality]\label{HCKN}There exists a
positive constant $C$ such that the following inequality holds true for all
$u\in C_{0}^{\infty}\left(  \mathbb{R}^{n}\right)  :$%
\[
\left\Vert \left\vert x\right\vert ^{\gamma}D^{j}u\right\Vert _{L^{s}}\leq
C\left\Vert \left\vert x\right\vert ^{\alpha}D^{k}u\right\Vert _{L^{p}}%
^{a}\left\Vert \left\vert x\right\vert ^{\beta}u\right\Vert _{L^{q}}^{1-a}%
\]
if and only if the following conditions hold:%
\begin{align}
\frac{1}{s}+\frac{\gamma-j}{n}  &  =a\left(  \frac{1}{p}+\frac{\alpha-k}%
{n}\right)  +\left(  1-a\right)  \left(  \frac{1}{q}+\frac{\beta}{n}\right)
\label{DB}\\
\gamma &  \leq a\alpha+\left(  1-a\right)  \beta\label{EC}\\
a\left(  \alpha-k\right)  +\left(  1-a\right)  \beta+j  &  \leq\gamma\text{ if
}s>0\text{ and }\frac{1}{q}+\frac{\beta}{n}=\frac{1}{p}+\frac{\alpha-k}{n}.
\end{align}

\end{propositiona}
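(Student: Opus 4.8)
This is the higher-order Caffarelli-Kohn-Nirenberg inequality, due to Lin \cite{Lin86}, and the quickest route is simply to quote that reference; here is how I would argue otherwise. The necessity of \eqref{DB} is pure homogeneity: replacing $u$ by $u(\lambda x)$ and letting $\lambda\to0$ and $\lambda\to\infty$ shows that both sides must scale identically, which is exactly \eqref{DB}. The remaining two conditions are the genuinely ``non-scaling'' constraints, and I would extract them by testing against families that feel the weight -- bumps $\phi(x-Re_{1})$ pushed to infinity and, dually, toward the origin, together with truncated power profiles $\chi(x)|x|^{-\tau}$ with the cutoff $\chi$ removed near $\infty$ or near $0$ -- and reading off for which ranges of exponents the estimate survives the limits; this yields the upper bound \eqref{EC} on $\gamma$ and the borderline lower bound (the third condition).

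For the sufficiency, by density I may take $u\in C_{0}^{\infty}(\mathbb{R}^{n}\setminus\{0\})$. The plan is to remove the weights by passing to log-polar coordinates $(t,\omega)\in\mathbb{R}\times S^{n-1}$, $t=-\log|x|$, $\omega=x/|x|$: setting $v(t,\omega):=u(e^{-t}\omega)$, each weighted norm $\||x|^{\rho}D^{i}u\|_{L^{r}}$ becomes an unweighted $L^{r}$ norm on the cylinder of a fixed order-$i$ differential operator applied to $v$, times an explicit power of $e^{t}$ determined by $\rho,r,i$. I would then cut the non-compact direction into unit slabs $Q_{\ell}=[\ell,\ell+1]\times S^{n-1}$, on each of which those powers of $e^{t}$ are comparable to constants, and invoke the classical unweighted higher-order Gagliardo-Nirenberg interpolation inequality $\|D^{j}v\|_{L^{s}(Q_{\ell})}\lesssim\|D^{k}v\|_{L^{p}(Q_{\ell}^{\ast})}^{a}\|v\|_{L^{q}(Q_{\ell}^{\ast})}^{1-a}+(\text{lower order})$ on a slightly enlarged slab $Q_{\ell}^{\ast}$. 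Condition \eqref{DB} is precisely what forces the powers of $e^{\ell}$ produced by the three weights to cancel, so that this slabwise estimate holds with a constant uniform in $\ell$. Summing over $\ell$ and applying a discrete H\"older inequality then turns $\sum_{\ell}(\text{product})$ into a product of sums which, by the finite overlap of the $Q_{\ell}^{\ast}$, is controlled by the two global norms $\||x|^{\alpha}D^{k}u\|_{L^{p}}$ and $\||x|^{\beta}u\|_{L^{q}}$; the admissibility of the exponents in this discrete H\"older step and the convergence of the series -- including that of the lower-order Gagliardo-Nirenberg remainders -- are exactly what \eqref{EC} and the third condition ensure.

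The hard part will be the proliferation of intermediate-order terms: both the localization to the slabs and the rewriting of $|x|^{\alpha}D^{k}u$ in the new coordinates generate derivatives of $v$ of every order $0<i<k$, each carrying its own power of $e^{t}$, so the clean two-endpoint interpolation above is available only after all of these have been absorbed into $D^{k}v$ and $v$. I expect that to require a nested induction on the order of differentiation -- re-interpolating at each step -- while keeping every constant uniform in the slab index; this is the delicate point of Lin's argument, and it is there that the hypotheses $j<k$, \eqref{DB}, \eqref{EC}, and the borderline third condition must all be used together.
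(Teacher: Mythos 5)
The paper does not actually prove Proposition \ref{HCKN}; it is stated as a known result with the citation ``(see \cite{Lin86}, for instance)'' immediately preceding, and that citation is the entirety of the paper's justification. Your primary move --- quoting Lin --- is therefore exactly what the paper does, and your sketch is an optional extra. As an extra, though, it has one concrete gap that is worth naming, beyond the admitted incompleteness of the ``nested induction on intermediate derivatives.''

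The problem is the discrete H\"older step at the end. After localizing to slabs $Q_{\ell}$ and using \eqref{DB} to cancel the powers of $e^{\ell}$, you are summing a quantity of the form $\sum_{\ell}A_{\ell}^{sa/p}B_{\ell}^{s(1-a)/q}$ where $A_{\ell}=\|D^{k}v\|_{L^{p}(Q_{\ell}^{\ast})}^{p}$ and $B_{\ell}=\|v\|_{L^{q}(Q_{\ell}^{\ast})}^{q}$, and you need to dominate this by $\bigl(\sum_{\ell}A_{\ell}\bigr)^{sa/p}\bigl(\sum_{\ell}B_{\ell}\bigr)^{s(1-a)/q}$. That inequality for nonnegative sequences holds if and only if $\frac{sa}{p}+\frac{s(1-a)}{q}\geq1$; rewritten using \eqref{DB}, this is exactly $a(\alpha-k)+(1-a)\beta+j\leq\gamma$, i.e.\ the \emph{third} listed condition. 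But the proposition assumes that condition only in the degenerate case $\frac{1}{q}+\frac{\beta}{n}=\frac{1}{p}+\frac{\alpha-k}{n}$. In the generic case the inequality is supposed to hold even when $\frac{sa}{p}+\frac{s(1-a)}{q}<1$, and there your slabwise-sum argument simply does not close: you would be ``proving'' necessity of a condition that is not in fact necessary. What makes the generic case work in the actual proofs (CKN, and Lin's higher-order version) is that when the two endpoint quantities $\frac{1}{q}+\frac{\beta}{n}$ and $\frac{1}{p}+\frac{\alpha-k}{n}$ differ, the two $e^{\ell}$-weights on the right do not scale the same way, and one exploits that mismatch --- roughly, a geometric decay in one direction of $\ell$ --- rather than a pure product-of-sums H\"older. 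Your sketch does not distinguish these two regimes, so as written it only covers the borderline case. Since the paper offers nothing beyond the citation, this is not a discrepancy with the paper, but it is a genuine gap if your sketch were to be read as a proof.
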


In particular, the condition (\ref{EC}) is equivalent to $\theta\geq\frac
{1}{N}$. Therefore, the condition $\theta\geq\frac{1}{N}$ is necessary in
(\ref{2m-1i}). Similarly, $\theta\geq\frac{1}{N}$ is also necessary in
(\ref{odd-1i}).

It is worth mentioning that the proof of (\ref{HSr}) in Theorem \ref{Thm 1.1}
in \cite{DN23} made use of the nonlinear ground state representations for the
classical Hardy inequalities (see \cite{FLS08, FS08}, for instance), and then
by making change of variables in a clever way, Dietze and Nam were able to
reduce the question to a $1$-dimensional Caffarelli-Kohn-Nirenberg inequality
that has been investigated in, for instance, \cite{N41}. However, a nonlinear
ground state representation for higher order Hardy type inequalities, namely,
Rellich type inequalities, is still absent in the literature. In order to
overcome the obstacles and to establish our Theorem \ref{T3}, we will first
use Fourier analysis and prove the following two results, that seem new in the
literature and are of independent interest, and then use them to deduce our
main result Theorem \ref{T3}.

Let $\mathcal{F}_{k}\left(  \phi\right)  \left(  \xi\right)  =\int
_{\mathbb{R}^{k}}\phi\left(  x\right)  e^{-2\pi ix\cdot\xi}dx$ be the Fourier
transform of $\phi$ on $\mathbb{R}^{k}$. Then, we have

\begin{theorem}
\label{T3.1}Let $u\left(  \cdot\right)  =U\left(  \left\vert \cdot\right\vert
\right)  $ and $v\left(  \cdot\right)  =V\left(  \left\vert \cdot\right\vert
\right)  $ be such that $\mathcal{F}_{N}\left(  u\right)  $ and $\mathcal{F}%
_{N+2}\left(  v\right)  $ are well-defined, $\lim_{r\rightarrow0^{+}}U\left(
r\right)  r^{N}=0$, $\lim_{r\rightarrow\infty}U\left(  r\right)  r^{\frac
{N-1}{2}}=0$, and $rV\left(  r\right)  =U^{\prime}\left(  r\right)  $ for
$r>0$. Then $\mathcal{F}_{N+2}\left(  v\right)  =-2\pi\mathcal{F}_{N}\left(
u\right)  $. As a consequence, for $\alpha\geq0$, if defined, then
\[
\int_{\mathbb{R}^{N}}|\Delta^{\alpha+\frac{1}{2}}u|^{2}dx=\frac{|\mathbb{S}%
^{N-1}|}{|\mathbb{S}^{N+1}|}\int_{\mathbb{R}^{N+2}}|\Delta^{\alpha}v|^{2}dx.
\]

\end{theorem}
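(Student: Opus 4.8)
The plan is to compute the radial Fourier transform explicitly and exploit the classical identity relating Bessel functions in consecutive dimensions. First I would recall that for a radial function $w(\cdot) = W(|\cdot|)$ on $\mathbb{R}^d$, the Fourier transform is again radial and given by the Hankel-type formula
\[
\mathcal{F}_d(w)(\xi) = 2\pi |\xi|^{-\frac{d-2}{2}} \int_0^\infty W(r) J_{\frac{d-2}{2}}(2\pi r |\xi|)\, r^{\frac{d}{2}}\, dr,
\]
where $J_\nu$ denotes the Bessel function of the first kind. Applying this with $d = N+2$ to $v$ (using $V(r) = U'(r)/r$) and with $d = N$ to $u$, the claimed identity $\mathcal{F}_{N+2}(v) = -2\pi \mathcal{F}_N(u)$ reduces to a one-dimensional identity between the two Hankel integrals. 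The key analytic input is the derivative/recurrence relation for Bessel functions, $\frac{d}{dt}\big(t^{-\nu} J_\nu(t)\big) = -t^{-\nu} J_{\nu+1}(t)$, which converts the $J_{N/2}$ integrand (dimension $N+2$) into something involving $J_{(N-2)/2}$ (dimension $N$) after an integration by parts in $r$.

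Concretely, I would write out $\mathcal{F}_{N+2}(v)(\xi)$ with $\rho = |\xi|$, substitute $V(r) = U'(r)/r$, and integrate by parts in $r$: the factor $r^{\frac{N+2}{2}}\cdot r^{-1} J_{N/2}(2\pi r\rho) = r^{N/2} J_{N/2}(2\pi r\rho)$ multiplied by $U'(r)$ gets integrated by parts, moving the derivative onto $U$ and onto the Bessel factor. Using $\frac{d}{dr}\big(r^{N/2} J_{N/2}(2\pi r\rho)\big) = (2\pi\rho)\, r^{N/2} J_{(N-2)/2}(2\pi r\rho)$ (the ascending recurrence $\frac{d}{dt}(t^\nu J_\nu(t)) = t^\nu J_{\nu-1}(t)$ with $\nu = N/2$), the bulk term becomes precisely $-2\pi$ times the Hankel integral defining $\mathcal{F}_N(u)(\xi)$, after matching the powers of $\rho$ in the two normalization prefactors $\rho^{-N/2}$ and $\rho^{-(N-2)/2}$. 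This is where the decay hypotheses enter: $\lim_{r\to 0^+} U(r) r^N = 0$ kills the boundary term at the origin (using the small-argument asymptotics $J_\nu(t) \sim t^\nu$), and $\lim_{r\to\infty} U(r) r^{(N-1)/2} = 0$ together with the large-argument bound $J_\nu(t) = O(t^{-1/2})$ kills the boundary term at infinity. The main obstacle is bookkeeping: correctly tracking the Bessel orders, the powers of $2\pi$, and justifying the integration by parts and the interchange of integration with the Hankel integral rigorously, rather than any deep new idea.

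For the consequence, I would use the Fourier-side description of the fractional polyharmonic energies. For a radial function $u$ on $\mathbb{R}^N$ with $\widehat{u} = \mathcal{F}_N(u)$, one has by Plancherel
\[
\int_{\mathbb{R}^N} |\Delta^{\alpha+\frac{1}{2}} u|^2\, dx = (2\pi)^{2(2\alpha+1)}\int_{\mathbb{R}^N} |\xi|^{2(2\alpha+1)} |\widehat{u}(\xi)|^2\, d\xi = c_N \int_0^\infty \rho^{2(2\alpha+1)} |\Phi(\rho)|^2 \rho^{N-1}\, d\rho,
\]
where $\Phi(\rho)$ is the common radial profile of $\widehat{u}$ and $c_N = |\mathbb{S}^{N-1}|$; similarly $\int_{\mathbb{R}^{N+2}} |\Delta^\alpha v|^2\, dx = |\mathbb{S}^{N+1}| (2\pi)^{4\alpha} \int_0^\infty \rho^{4\alpha} |\Psi(\rho)|^2 \rho^{N+1}\, d\rho$ where $\Psi$ is the radial profile of $\mathcal{F}_{N+2}(v)$. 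By the first part, $\Psi(\rho) = -2\pi \Phi(\rho)$, so $|\Psi(\rho)|^2 = 4\pi^2 |\Phi(\rho)|^2$. Substituting and noting that $\rho^{4\alpha}\cdot \rho^{N+1}\cdot 4\pi^2 = \rho^{2(2\alpha+1)}\cdot \rho^{N-1}\cdot (2\pi)^2$ exactly matches the $\mathbb{R}^N$ integrand (the extra $\rho^2$ from the higher dimension compensates the missing half-Laplacian, and the $(2\pi)^2$ factors line up), the two integrals agree up to the surface-measure ratio $|\mathbb{S}^{N-1}|/|\mathbb{S}^{N+1}|$, which is exactly the stated constant. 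The phrase ``if defined'' is just the reminder that both sides require the relevant integrals to converge, which holds under the hypotheses that make $\mathcal{F}_N(u)$, $\mathcal{F}_{N+2}(v)$ well-defined.
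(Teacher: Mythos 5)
Your proposal is correct and follows essentially the same route as the paper: radial Fourier transforms via the Hankel integral with $J_{(d-2)/2}$, integration by parts using the ascending Bessel recurrence $\frac{d}{dt}\bigl(t^{\nu}J_{\nu}(t)\bigr)=t^{\nu}J_{\nu-1}(t)$ with $\nu=N/2$, elimination of the boundary terms from the two decay hypotheses combined with the small- and large-argument behavior of $J_{N/2}$, and then Plancherel in polar coordinates for the energy identity. (One small expository slip: you first quote the descending recurrence $\frac{d}{dt}(t^{-\nu}J_\nu(t))=-t^{-\nu}J_{\nu+1}(t)$ as the key input, but the computation you actually carry out — correctly — uses the ascending one, exactly as in the paper.)
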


\begin{theorem}
\label{T3.2}Let $U$ be a smooth function on $\left(  0,\infty\right)  $ and
$u\left(  \cdot\right)  =U\left(  \left\vert \cdot\right\vert \right)  $ be a
radial function. If $N\geq2m-1$, $m\in\mathbb{N}$, and $\lim_{r\rightarrow
0^{+},\infty}\left(  \frac{U^{\prime}\left(  r\right)  }{r}\right)  ^{\left(
2m-2\right)  }r^{\frac{N}{2}}=0$, then
\[
\int_{\mathbb{R}^{N}}|\Delta^{m}u|^{2}dx\geq|\mathbb{S}^{N-1}|\int_{0}%
^{\infty}\left\vert U^{\left(  2m\right)  }\left(  r\right)  \right\vert
^{2}r^{N-1}dr.
\]
If $N\geq2m$, $m\in\mathbb{N}$, and $\lim_{r\rightarrow0^{+},\infty}\left(
\frac{U^{\prime}\left(  r\right)  }{r}\right)  ^{\left(  2m-1\right)
}r^{\frac{N}{2}}=0$, then
\[
\int_{\mathbb{R}^{N}}|\nabla\Delta^{m}u|^{2}dx\geq|\mathbb{S}^{N-1}|\int
_{0}^{\infty}\left\vert U^{\left(  2m+1\right)  }\left(  r\right)  \right\vert
^{2}r^{N-1}dr.
\]

\end{theorem}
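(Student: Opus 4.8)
The plan is to reduce the multi-dimensional integral of $|\Delta^m u|^2$ (resp. $|\nabla\Delta^m u|^2$) to a one-dimensional weighted integral by iterating Theorem \ref{T3.1}, and then handling the final low-dimensional step by a direct ODE/Fourier computation. The key algebraic fact is that for a radial function $u(\cdot)=U(|\cdot|)$ one has $\Delta u(\cdot)=W(|\cdot|)$ where $W(r)=U''(r)+\frac{N-1}{r}U'(r)$, and if $v(\cdot)=V(|\cdot|)$ with $rV(r)=U'(r)$, then $v$ is a radial function on $\mathbb{R}^{N+2}$ whose radial profile satisfies $V'(r)+\frac{N+1}{r}V(r)=\frac{1}{r}\big(U''(r)+\frac{N-1}{r}U'(r)\big)=\frac{1}{r}W(r)$; in other words, the "$\mathbb{R}^N$-Laplacian of $U$" becomes, after the substitution $U\mapsto U'/r$, exactly $1/r$ times the "$\mathbb{R}^{N+2}$-Laplacian of $V$". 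This is precisely the content of Theorem \ref{T3.1}, which gives the identity $\int_{\mathbb{R}^N}|\Delta^{\alpha+\frac12}u|^2\,dx=\frac{|\mathbb{S}^{N-1}|}{|\mathbb{S}^{N+1}|}\int_{\mathbb{R}^{N+2}}|\Delta^\alpha v|^2\,dx$ for half-integer and integer orders alike.

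First I would treat the case $\int_{\mathbb{R}^N}|\Delta^m u|^2\,dx$. Applying Theorem \ref{T3.1} with $\alpha+\frac12=m$, i.e.\ $\alpha=m-\frac12$, is not quite what we want since we need integer-order operators at each stage; instead I would apply the even version of the dimension-shift twice per unit drop in $m$, or equivalently iterate the identity $\int_{\mathbb{R}^N}|\Delta^m u|^2\,dx = \frac{|\mathbb{S}^{N-1}|}{|\mathbb{S}^{N+3}|}\int_{\mathbb{R}^{N+4}}|\Delta^{m-1}\tilde v|^2\,dx$ obtained by composing Theorem \ref{T3.1} with itself. After $m-1$ such steps one lands in dimension $N+4(m-1)$ (when $k=2m$) with the operator $|\Delta \cdot|^2$, and after one more half-step in dimension $N+4m-2$ with $|\nabla(\cdot)|^2$ — or, more cleanly, after the full $m$-fold iteration one reaches the base inequality $\int_{\mathbb{R}^{M}}|\Delta w|^2\,dx\geq |\mathbb{S}^{M-1}|\int_0^\infty |W''(r)|^2 r^{M-1}\,dr$ for a radial $w(\cdot)=W(|\cdot|)$ on $\mathbb{R}^M$, whose radial profile $W$ is obtained from $U$ by the explicit chain of substitutions $W = \big((\cdots(U'/r)'/r\cdots)'/r\big)$. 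The base case inequality follows from writing $\Delta w = W''(r)+\frac{M-1}{r}W'(r)$, expanding the square $\int_0^\infty\!\big(W''+\frac{M-1}{r}W'\big)^2 r^{M-1}\,dr = \int_0^\infty |W''|^2 r^{M-1}\,dr + 2(M-1)\int_0^\infty W''W' r^{M-2}\,dr + (M-1)^2\int_0^\infty |W'|^2 r^{M-3}\,dr$, and integrating the cross term by parts: $2\int_0^\infty W''W' r^{M-2}\,dr = \int_0^\infty (|W'|^2)' r^{M-2}\,dr = -(M-2)\int_0^\infty |W'|^2 r^{M-3}\,dr$ plus a boundary term that vanishes under the stated decay hypotheses; combining gives the cross plus last term equal to $(M-1)\big((M-1)-(M-2)\big)\int_0^\infty |W'|^2 r^{M-3}\,dr = (M-1)\int_0^\infty |W'|^2 r^{M-3}\,dr \geq 0$, which yields the claim with the condition $M\geq 2$ (translating back to $N\geq 2m-1$). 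The odd case $\int_{\mathbb{R}^N}|\nabla\Delta^m u|^2\,dx$ is entirely parallel: iterate Theorem \ref{T3.1} (now ending on the half-integer side) to reduce to the one-dimensional bound $\int_0^\infty |W''+\frac{M-1}{r}W'|^2 r^{M-1}\,dr \geq \int_0^\infty |W''|^2 r^{M-1}\,dr$ in the appropriate shifted dimension, the same integration-by-parts argument applying verbatim.

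The bookkeeping connecting the nested substitutions back to plain derivatives of $U$ is the only delicate point. One must verify that the composite map sending $U$ through $k-2$ iterations of $U\mapsto U'/r$ and one final $U\mapsto U''$ (for the even case) reproduces $U^{(2m)}(r)$ exactly — this is the reason the right-hand side features the ordinary $2m$-th derivative $U^{(2m)}$ rather than some weighted combination — and that the surface-measure constants $|\mathbb{S}^{M-1}|$ telescope correctly through all the intermediate dimensions so that the final constant is exactly $|\mathbb{S}^{N-1}|$ (the ratio $\frac{|\mathbb{S}^{N-1}|}{|\mathbb{S}^{N+1}|}$ appearing in Theorem \ref{T3.1} cancels against the change of the $r^{N-1}$ measure factor). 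I expect the main obstacle to be precisely this: checking that the repeated application of the substitution $r V(r)=U'(r)$ correctly untangles into $U^{(2m)}$, i.e.\ proving by induction that if $V_j$ denotes the $j$-th iterate then $V_j$'s relevant "raw" derivative equals $U^{(2m)}$ at the last stage, and simultaneously tracking the decay hypotheses through each step so that all boundary terms in Theorem \ref{T3.1} and in the final integration by parts genuinely vanish. Once that induction is set up cleanly, both displayed inequalities follow by assembling the chain of identities from Theorem \ref{T3.1} with the single nonnegative one-dimensional estimate above.
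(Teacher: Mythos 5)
Your overall strategy---iterating the dimension-shift identity of Theorem \ref{T3.1} and reducing to a one-dimensional base case inequality---is essentially the strategy of the paper, which organizes exactly this reduction as an induction in $m$. Your base-case computation (expanding $|W''+\tfrac{M-1}{r}W'|^2 r^{M-1}$, integrating the cross term by parts to pick up $(M-1)\int_0^\infty |W'|^2 r^{M-3}\,dr\ge 0$) is also the paper's.

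However, the step you flag as the ``only delicate point'' is not merely bookkeeping: the claim you state there is false, and the correct statement is an inequality, not an identity. Iterating $U\mapsto U'/r$ and taking a final second derivative does \emph{not} reproduce $U^{(2m)}$ exactly. Already for $m=2$ a direct computation gives
\[
V_2''(r)=U^{(4)}(r)r^{-2}-5U'''(r)r^{-3}+12U''(r)r^{-4}-12U'(r)r^{-5},
\]
so even after multiplying by the compensating power of $r$ one is left with lower-order correction terms. What makes the argument close is the Leibniz relation coming from $U'=rV$, namely
\[
U^{(2k+1)}(r)=rV^{(2k)}(r)+2k\,V^{(2k-1)}(r),
\]
together with the integration-by-parts identity
\[
\int_0^\infty \bigl|U^{(2k+1)}\bigr|^2 r^{N-1}\,dr=\int_0^\infty \bigl|V^{(2k)}\bigr|^2 r^{N+1}\,dr+2k(2k-N)\int_0^\infty \bigl|V^{(2k-1)}\bigr|^2 r^{N-1}\,dr.
\]
Under the stated dimension restriction $N\ge 2k$ the coefficient $2k(2k-N)$ is nonpositive, and this one-sided comparison (not an equality) is precisely what lets the chain of Theorem \ref{T3.1} identities be converted into the asserted lower bound; the parity-matched step for $U^{(2k+2)}=rV^{(2k+1)}+(2k+1)V^{(2k)}$ works identically and uses $N\ge 2k+1$. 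Without this observation your proposal cannot be completed: the surface-area constants and $r$-weights do telescope, but the integrand on the right is not the one you want, and the difference must be shown to have a favorable sign, which is exactly where the dimension hypotheses $N\ge 2m-1$ and $N\ge 2m$ enter.
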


\medskip

If one wishes to extend the results in Theorem \ref{T3} to the setting of
non-radial functions, one will need to put more constraints on the dimensions,
as showed in \cite{DN23}. Indeed, the second purpose of this article is to
study the following critical Rellich-Sobolev interpolation inequality with the
critical quadratic forms of the Rellich inequalities for general functions:

\begin{theorem}
\label{T1} With $N=1/\theta=4m+1$ $(m\in\mathbb{Z}_{+})$ and $C_{1}(N,m)$
defined in Theorem \ref{Thm 1.3}, we have
\begin{align}
\left(  \int_{\mathbb{R}^{N}}|\Delta^{m}u|^{2}dx-C_{1}(N,m)\sup_{y\in
\mathbb{R}^{N}}\int_{\mathbb{R}^{N}}\dfrac{|u(x)|^{2}}{|x-y|^{4m}}dx\right)
^{\theta}  &  \left(  \sup_{y\in\mathbb{R}^{N}}\int_{\mathbb{R}^{N}}%
\dfrac{|u(x)|^{2}}{|x-y|^{4m}}dx\right)  ^{1-\theta}\nonumber\\
&  \geq C\left\Vert u\right\Vert _{L^{\frac{2N}{N-4m}}}^{2} \label{2m-0i}%
\end{align}
holds with a constant $C=C(N,m,\theta)>0$ independent of $u\in D^{2m,2}%
(\mathbb{R}^{N})$.

Also, with $N=1/\theta=4m+3$ $(m\in\mathbb{N})$ and $C_{2}(N,m)$ defined in
Theorem \ref{Thm 1.3}, we have
\begin{align}
\left(  \int_{\mathbb{R}^{N}}|\nabla\Delta^{m}u|^{2}dx-C_{2}(N,m)\sup
_{y\in\mathbb{R}^{N}}\int_{\mathbb{R}^{N}}\dfrac{|u(x)|^{2}}{|x-y|^{4m+2}%
}dx\right)  ^{\theta}  &  \left(  \sup_{y\in\mathbb{R}^{N}}\int_{\mathbb{R}%
^{N}}\dfrac{|u(x)|^{2}}{|x-y|^{4m+2}}dx\right)  ^{1-\theta}\nonumber\\
&  \geq C\left\Vert u\right\Vert _{L^{\frac{2N}{N-4m-2}}}^{2} \label{odd-0i}%
\end{align}
holds with a constant $C=C(N,m,\theta)>0$ independent of $u\in D^{2m+1,2}%
(\mathbb{R}^{N})$.
\end{theorem}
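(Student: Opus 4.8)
The plan is to reduce the non-radial critical Rellich-Sobolev inequality (Theorem \ref{T1}) to its radial counterpart (Theorem \ref{T3}) via a symmetrization-type argument that is available precisely at the special dimensions $N=4m+1$ and $N=4m+3$. The key point in the work of Dietze and Nam for the first-order case $N=3$ is that the supremum $\sup_{y}\int |u(x)|^{2}/|x-y|^{2}\,dx$ is \emph{comparable} to the radial weight functional after centering at a well-chosen point, and that the Hardy-Rellich deficit is monotone under a suitable rearrangement when the homogeneity matches the dimension. So first I would fix $y^{\ast}\in\mathbb{R}^{N}$ realizing (or nearly realizing) the supremum in the second factor, translate so that $y^{\ast}=0$, and then invoke a higher-order Talenti-type comparison principle (the one advertised in the introduction) to replace $u$ by a radial function $u^{\#}$ with $\int_{\mathbb{R}^{N}}|\Delta^{m}u^{\#}|^{2}\,dx\le\int_{\mathbb{R}^{N}}|\Delta^{m}u|^{2}\,dx$ (resp. with $\nabla\Delta^{m}$), while $\int |u^{\#}|^{2}/|x|^{4m}\,dx = \int |u|^{2}/|x|^{4m}\,dx$ and $\|u^{\#}\|_{L^{2N/(N-4m)}}=\|u\|_{L^{2N/(N-4m)}}$. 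Since $\int |u|^{2}/|x|^{4m}\,dx\le \sup_{y}\int|u(x)|^{2}/|x-y|^{4m}\,dx$, the deficit in the first factor only decreases, so the left-hand side of \eqref{2m-0i} evaluated at $u$ dominates the left-hand side of \eqref{2m-1i} evaluated at $u^{\#}$, and Theorem \ref{T3} with $\theta=1/N$ finishes it.

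Concretely, the order of steps would be: (i) record the translation invariance of all three functionals under $u\mapsto u(\cdot - y^{\ast})$ except the Hardy-Rellich weight, and reduce to showing the inequality with the supremum replaced by the single weight centered at the optimal $y^{\ast}$; (ii) establish the comparison principle: for $u\in D^{2m,2}(\mathbb{R}^{N})$ set $f=(-\Delta)^{m}u$ (resp. $f = (-\Delta)^{m}u$ understood through $\nabla\Delta^{m}$ in the odd case), let $f^{\ast}$ be its symmetric decreasing rearrangement, solve $(-\Delta)^{m}u^{\#}=f^{\ast}$, and show pointwise $u^{\#}\ge u^{\ast}$ a.e.\ — here $u^{\ast}$ is the rearrangement of $u$ — using the iterated Riesz-potential representation $u=I_{2m}\ast f$ (valid because $0<2m<N$) together with the Riesz rearrangement inequality applied $m$ times; (iii) deduce $\int|u^{\#}|^{2}/|x|^{4m}\ge\int(u^{\ast})^{2}/|x|^{4m}=\int|u|^{2}/|x|^{4m}$ (Hardy-Littlewood, since $|x|^{-4m}$ is radially decreasing) and $\|u^{\#}\|_{L^{2^{\ast}_{m}}}\ge\|u^{\ast}\|_{L^{2^{\ast}_{m}}}=\|u\|_{L^{2^{\ast}_{m}}}$ while $\|\Delta^{m}u^{\#}\|_{2}=\|f^{\ast}\|_{2}=\|f\|_{2}=\|\Delta^{m}u\|_{2}$; (iv) combine: the first factor of \eqref{2m-0i} at $u$ is $\ge \|\Delta^{m}u^{\#}\|_{2}^{2}-C_{1}(N,m)\int|u^{\#}|^{2}/|x|^{4m}$, which together with the inequality $\int|u^{\#}|^{2}/|x|^{4m}\le\sup_{y}\int|u|^{2}/|x-y|^{4m}$ lets one bound \eqref{2m-0i}'s LHS below by \eqref{2m-1i}'s LHS for $u^{\#}$; (v) apply Theorem \ref{T3} at $\theta=1/N$ (which lies in $[1/N,2m/N]$ since $m\ge 1$) and at $\theta=1/N\in[1/N,(2m+1)/N]$ in the odd case. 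For the odd case $k=2m+1$ one works with $\nabla\Delta^{m}u$ and the representation $u = I_{2m+1}\ast g$ with $g$ a vector field; the same rearrangement machinery applies once one passes to $|g|$, using $0<2m+1<N$.

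The main obstacle is step (ii): a genuinely higher-order Talenti comparison principle is subtle because $(-\Delta)^{m}$ does not obey a maximum principle for $m\ge 2$, so $u^{\#}\ge u^{\ast}$ cannot be obtained from elliptic comparison directly. The route I expect to use is the explicit positivity of the Riesz kernel: writing $u = I_{2m}\ast f$ and factoring $I_{2m} = I_{2}\ast I_{2}\ast\cdots$ (an $m$-fold convolution, up to normalizing constants, valid when $2m<N$), one applies the Riesz rearrangement inequality successively — $\big(I_{2}\ast\cdots\ast I_{2}\ast f\big)^{\ast}\ge I_{2}\ast\cdots\ast I_{2}\ast f^{\ast}$ — to conclude $u^{\ast}\le I_{2m}\ast f^{\ast}=u^{\#}$ pointwise. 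This is exactly where the restriction $2m<N$ (hence $N=4m+1>4m>2m$, and $N=4m+3>2m+1$) is used and is automatic. A secondary technical point is ensuring the optimal $y^{\ast}$ exists (or handling a maximizing sequence and passing to the limit), and checking that the truncation/decay hypotheses in Theorem \ref{T3.2} and the Fourier reductions of Theorems \ref{T3.1}–\ref{T3.2} are compatible with $u^{\#}$; these are routine density arguments on $C_{0}^{\infty}$, which is dense in $D^{2m,2}$ and $D^{2m+1,2}$. Finally, one should remark that the dimensional restrictions $N=4m+1$ and $N=4m+3$ are \emph{forced}: the exponent matching $\theta = 1/N$ together with the scaling homogeneity of the three functionals (each homogeneous of a fixed degree under $u\mapsto \lambda^{(N-4m)/2}u(\lambda\cdot)$) pins down $N$ once the Rellich weight $|x|^{-4m}$ is prescribed, exactly as in the $N=3$ case of \cite{DN23}.
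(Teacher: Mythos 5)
Your outline captures only half of the paper's argument, and the half it does attempt rests on a tool that does not do what you claim. Two gaps are serious.

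First, you omit the two-case split on which the whole proof turns. Writing $A=\int|\Delta^{m}u|^{2}$ and $B=C_{1}(N,m)\sup_{y}\int|u|^{2}/|x-y|^{4m}$, the function $x\mapsto(A-x)^{\theta}x^{1-\theta}$ is decreasing only on $[(1-\theta)A,\,A]$. After symmetrization you replace $B$ by the \emph{larger} quantity $C_{1}(N,m)\int|v|^{2}/|x|^{4m}$ (not smaller — you have the Hardy-Littlewood inequality backwards in your step (iv)), so the monotonicity trick that converts this into a lower bound only applies if $B\ge(1-\theta)A$. When $(1-\theta)A\ge B$ your scheme simply breaks, and the paper handles that regime by a different argument entirely: there $A-B\ge\theta A$, so one bounds the left side below by $\|\Delta^{m}u\|_{2}^{2\theta}\|u\|_{M^{2,N-4m}}^{2(1-\theta)}$ and invokes the Gagliardo-Nirenberg inequality with Besov/Morrey norms (Theorem~\ref{Thm 1.2} with Proposition~\ref{lem 2.2}); this branch is where the constraint $\theta\ge(N-4m)/N$ comes from, and combined with $\theta=1/N$ from the radial case it forces $N=4m+1$ (resp.\ $4m+3$). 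Your appeal to ``scaling homogeneity'' cannot produce this constraint, since all three functionals scale identically for every admissible $N$.

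Second, the Riesz rearrangement inequality does not yield the pointwise comparison $u^{\ast}\le I_{2m}\ast f^{\ast}$. What Riesz rearrangement gives is an integral majorization, $\int_{0}^{s}(g\ast h)^{\sharp}\le\int_{0}^{s}(g^{\ast}\ast h^{\ast})^{\sharp}$ for all $s$; the pointwise inequality $(g\ast h)^{\ast}\le g^{\ast}\ast h^{\ast}$ is false in general even when $h$ is radially decreasing (take $N=1$, $g=\chi_{[a-1,a+1]}+\chi_{[-a-1,-a+1]}$, $h=\chi_{[-1,1]}$, and evaluate at $x=3$). The pointwise comparison you need is Talenti's principle for the Poisson equation, which rests on the maximum principle and the isoperimetric inequality, not on Riesz rearrangement. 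The paper proves exactly the iterated version you want (Proposition~\ref{prop 2.3}), by setting up Dirichlet problems for $(-\Delta)^{k}$ on $\Omega$ and $\Omega^{\ast}$ and iterating Talenti's comparison together with the maximum principle on the difference $w-v_{j-1}$. That is the right route; substituting Riesz rearrangement for it is a genuine error. (In the odd case $N=4m+3$ the paper additionally uses the first-order P\'olya--Szeg\"o inequality on $f=(-\Delta)^{m}u$, i.e.\ $\|\nabla f^{\ast}\|_{2}\le\|\nabla f\|_{2}$, which your proposal does not mention.)

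A smaller point: several equalities in your first paragraph ($\int|u^{\#}|^{2}/|x|^{4m}=\int|u|^{2}/|x|^{4m}$, $\|u^{\#}\|_{L^{2N/(N-4m)}}=\|u\|_{L^{2N/(N-4m)}}$) should be inequalities, and the direction matters precisely because of the monotonicity issue above.
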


In particular, by choosing $m=0$ in (\ref{odd-0i}), we recover the result
studied by Dietze and Nam in \cite{DN23}.

\begin{remark}
As in \cite{DN23}, the reason one needs to include the supremums in
(\ref{2m-0i}) and (\ref{odd-0i}) is to make the inequalities (\ref{2m-0i}) and
(\ref{odd-0i}) to be invariant under translations and dilations. Moreover,
they cannot be removed since without supremum, the term $\int_{\mathbb{R}^{N}%
}\dfrac{|u(x)|^{2}}{|x|^{k}}dx$ is not translation-invariant and can be made
arbitrarily small by the translation $u_{z}\left(  \cdot\right)  =u\left(
x-z\right)  $ as $\left\vert z\right\vert \rightarrow\infty$. Therefore, by
the critical Sobolev embeddings, (\ref{2m-0i}) and (\ref{odd-0i}) cannot hold
without the supremum terms.
\end{remark}

It is also worthy to mention that the proof of (\ref{HS}) in Theorem
\ref{Thm 1.1} in \cite{DN23} is splitted into two cases. In the first case
when the Hardy quadratic form $h[u]$ is comparable to the standard Dirichlet
energy $\frac{1}{2}\left\Vert \nabla u\right\Vert _{2}^{2}$, the authors can
apply the Gagliardo-Nirenberg inequality with Morrey norms established in
\cite{PP14}. In the second case, the proof relies on the P\'{o}lya-Szeg\"{o}
rearrangement principle to reduce the problem to the radial case. In our
situation, to deal with the first case, we can use the higher order
Gagliardo-Nirenberg inequality with Besov norms established recently by Dao,
Lam and Lu in \cite{DLL22} (We note that our results in \cite{Dao23, DLL22}
are stronger than the ones established in \cite{PP14} in the sense that the
Morrey spaces can be embedded into the Besov spaces. See Proposition
\ref{lem 2.2}). However, in the second case, a P\'{o}lya-Szeg\"{o} type
inequality for higher order operators is completely missing in the literature
and is not expected to hold. To by-pass the difficulties, we will apply the
comparison principle for higher order derivatives which is a generalization of
the comparison principle in \cite{Tal76}.

\medskip

We also mention here that Dietze and Nam proved in \cite{DN23} the following
Hardy-Sobolev inequalities with Lorentz norms:

\begin{theorema}
\label{Thm 1.4} Let $2\leq p<N$, $p\leq r\leq\infty$, $p^{\ast}=\frac{Np}%
{N-p}$ and
\[
\theta\in\left[  \frac{p}{\min\left(  r,p^{\ast}\right)  },\frac{1}{p}%
-\frac{1}{r}\right]  .
\]
Then
\begin{equation}
\left(  \int_{\mathbb{R}^{N}}|\nabla u|^{p}dx-\left(  \dfrac{N-p}{p}\right)
^{p}\sup_{y\in\mathbb{R}^{N}}\int_{\mathbb{R}^{N}}\dfrac{|u(x)|^{p}}%
{|x-y|^{p}}dx\right)  ^{\theta}\left(  \sup_{y\in\mathbb{R}^{N}}%
\int_{\mathbb{R}^{N}}\dfrac{|u(x)|^{p}}{|x-y|^{p}}dx\right)  ^{1-\theta}\geq
C\left\Vert u\right\Vert _{L^{p^{\ast},r}}^{p} \label{HSL}%
\end{equation}
holds with a constant $C=C(N,p,r,\theta)>0$ independent of $u\in
D^{1,p}(\mathbb{R}^{N})$. Moreover, \eqref{HSL} does not hold if $\theta
<\frac{p}{\min\left(  r,\frac{pN}{N-p}\right)  }$ (with arbitrary $p\geq2$) or
if $\theta>\frac{1}{p}-\frac{1}{r}$ and $p=2.$
\end{theorema}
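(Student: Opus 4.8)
The plan follows the two-regime strategy of \cite{DN23} (the case $k=1$, $p=2$), carried through for general $p$. Write $c_p=\left(\frac{N-p}{p}\right)^p$, $\mathcal H[u]=\sup_{y\in\mathbb R^N}\int_{\mathbb R^N}\frac{|u(x)|^p}{|x-y|^p}\,dx$ and $\mathcal D[u]=\int_{\mathbb R^N}|\nabla u|^p\,dx-c_p\,\mathcal H[u]$. Both functionals are invariant under dilations and translations, and the $L^p$-Hardy inequality (applied after translating) gives $\int|\nabla u|^p\ge c_p\mathcal H[u]$, so $\mathcal D[u]\ge0$. A direct estimate from the definition of the Morrey space yields $\mathcal H[u]\gtrsim\|u\|_{M^{p,N-p}}^p$, and by Proposition \ref{lem 2.2} one has $M^{p,N-p}\hookrightarrow M^{1,(N-p)/p}\hookrightarrow\dot B^{-(N-p)/p}$, whence $\mathcal H[u]\gtrsim\|u\|_{\dot B^{-(N-p)/p}}^p$. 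Fix a small $\varepsilon=\varepsilon(N,p)>0$ and split into the \emph{subcritical} regime $\mathcal D[u]\ge\varepsilon\int|\nabla u|^p$ and the \emph{near--ground--state} regime $\mathcal D[u]<\varepsilon\int|\nabla u|^p$.

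In the subcritical regime $\mathcal D[u]\gtrsim\int|\nabla u|^p$ and, since $\mathcal H[u]\le c_p^{-1}\int|\nabla u|^p$, also $\mathcal D[u]\gtrsim\mathcal H[u]$. It therefore suffices to prove the Gagliardo--Nirenberg-type inequality $\big(\int|\nabla u|^p\big)^{\theta_0}\mathcal H[u]^{1-\theta_0}\gtrsim\|u\|_{L^{p^\ast,r}}^p$ at the left endpoint $\theta_0=\frac{p}{\min(r,p^\ast)}$, after which a general admissible $\theta\ge\theta_0$ is handled by writing $\mathcal D^{\theta}\mathcal H^{1-\theta}=\mathcal D^{\theta_0}\mathcal H^{1-\theta_0}\,(\mathcal D/\mathcal H)^{\theta-\theta_0}\gtrsim\mathcal D^{\theta_0}\mathcal H^{1-\theta_0}$ and using $\mathcal D/\mathcal H\gtrsim1$ in this regime. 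For the endpoint inequality one combines $\mathcal H[u]\gtrsim\|u\|_{\dot B^{-(N-p)/p}}^p$ with the Gagliardo--Nirenberg inequality with Besov norms of \cite{DLL22, Dao23}: when $r\ge p^\ast$ it is the $p$-th power of Theorem \ref{Thm 1.2} (with $m=1$, $k=0$, $s=\frac{N-p}{p}$, so that $r=p\frac{m+s}{k+s}=p^\ast$ and $\theta_0=\frac{p}{p^\ast}=\frac{N-p}{N}$) composed with $L^{p^\ast}\hookrightarrow L^{p^\ast,r}$; when $p\le r<p^\ast$ one uses the Lorentz-refined version of the same Littlewood--Paley argument together with the sharp Lorentz--Sobolev embedding $D^{1,p}\hookrightarrow L^{p^\ast,p}$.

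In the near--ground--state regime one first symmetrizes. By the P\'olya--Szeg\"o inequality (valid for all $p\ge1$ in the first-order case) $\int|\nabla u^\ast|^p\le\int|\nabla u|^p$; by the Hardy--Littlewood rearrangement inequality $\int\frac{|u(x)|^p}{|x-y|^p}dx\le\int\frac{|u^\ast(x)|^p}{|x|^p}dx$ for every $y$, and for the radial nonincreasing $u^\ast$ the supremum defining $\mathcal H$ is attained at $y=0$; hence $\mathcal D[u]\ge\mathcal D[u^\ast]$, while $c_p\mathcal H[u]=\int|\nabla u|^p-\mathcal D[u]\ge(1-\varepsilon)\int|\nabla u|^p\ge(1-\varepsilon)\int|\nabla u^\ast|^p\ge(1-\varepsilon)c_p\mathcal H[u^\ast]$ gives $\mathcal H[u]\gtrsim\mathcal H[u^\ast]$. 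Since Lorentz norms are rearrangement-invariant, it remains to prove the radial inequality for $v=u^\ast$. For this, perform the ground-state substitution $v(x)=|x|^{-(N-p)/p}\,w(\log|x|)$. For $p=2$ the Frank--Lieb--Seiringer identity and the change of variable $s=\log|x|$ give the exact identities $\mathcal D[v]=|\mathbb S^{N-1}|\int_{\mathbb R}|w'|^2ds$, $\mathcal H[v]=|\mathbb S^{N-1}|\int_{\mathbb R}|w|^2ds$ and, using that $v$ is nonincreasing (equivalently $w'\le\frac{N-p}{p}w$ on $\mathbb R$), $\|v\|_{L^{p^\ast,r}}^r=C_N\int_{\mathbb R}|w|^rds$; for general $p$ one uses instead the remainder (ground-state-transform) form of the $p$-Hardy inequality and a quantitative convexity estimate to get $\mathcal D[v]\gtrsim\int_{\mathbb R}|w'|^pds$ modulo lower-order terms absorbable into $\mathcal H[v]$, with comparable expressions for $\mathcal H[v]$ and $\|v\|_{L^{p^\ast,r}}$. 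The radial inequality then reduces to the one-dimensional Gagliardo--Nirenberg--Lorentz inequality $\|w'\|_{L^p(\mathbb R)}^{p\theta}\|w\|_{L^p(\mathbb R)}^{p(1-\theta)}\gtrsim\|w\|_{L^{p^\ast,r}(\mathbb R)}^p$ \emph{restricted to the class} $\{w\ge0:w'\le\frac{N-p}{p}w,\ w\to0\text{ at }\pm\infty\}$; the monotonicity constraint is exactly what breaks the naive scale invariance and lets the inequality hold on the whole interval $[\frac{p}{\min(r,p^\ast)},\frac1p-\frac1r]$ rather than only at the scaling value $\theta=\frac1p-\frac1r$.

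The main obstacle is this constrained one-dimensional inequality and the identification of the sharp $\theta$-interval; for $p\ne2$ it is further complicated by the substitution producing an inequality with error terms rather than an exact identity and by the less transparent transformation of the Lorentz norm. Finally, the optimality assertions follow by testing: for $\theta>\frac1p-\frac1r$ with $p=2$ (where the reduction above is a chain of identities) one uses slowly varying profiles $w$, supported on a long interval and decaying to $0$ at its ends, which satisfy the constraint $w'\le\frac{N-p}{p}w$ comfortably and for which the unconstrained one-dimensional inequality is sharp with exponent $\frac1p-\frac1r$ forced by scaling, so the inequality fails; for $\theta<\frac{p}{\min(r,p^\ast)}$ one tests with functions that are constant on a long annulus (equivalently $w$ saturating $w'=\frac{N-p}{p}w$ there) glued smoothly to the Hardy ground state, for which $\mathcal H[v]$ is large while $\mathcal D[v]$ and $\|v\|_{L^{p^\ast,r}}$ are readily computed to violate the bound.
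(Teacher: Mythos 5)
A preliminary remark: Theorem \ref{Thm 1.4} is quoted in this paper from \cite{DN23} and is not proved here, so there is no in-paper proof to compare against; the closest relative is the proof of Theorem \ref{T2} in Section 4, and your outline follows that same general architecture (a case distinction according to whether the deficit $\mathcal{D}[u]$ is comparable to the full Dirichlet term, a Morrey/Besov Gagliardo--Nirenberg bound in the first case, and symmetrization plus a ground-state substitution reducing to a one-dimensional inequality under the constraint $w'\le\frac{N-p}{p}w$ in the second), which is indeed the Dietze--Nam strategy. The symmetrization bookkeeping in your near--ground-state regime ($\mathcal{D}[u]\ge\mathcal{D}[u^{\ast}]$, $\mathcal{H}[u]\ge(1-\varepsilon)\mathcal{H}[u^{\ast}]$, rearrangement invariance of the Lorentz norm) is correct.

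As a proof, however, the proposal has genuine gaps. First, the constrained one-dimensional Gagliardo--Nirenberg--Lorentz inequality on the whole interval $\theta\in\left[\frac{p}{\min(r,p^{\ast})},\frac{1}{p}-\frac{1}{r}\right]$ is never proved; you flag it yourself as ``the main obstacle,'' but it is precisely where the content of the theorem lives (both the validity on an interval of $\theta$ rather than only at the scaling value, and the sharpness assertions), so leaving it open leaves the heart of the argument missing. Second, in the subcritical regime the endpoint claim for $p\le r<p^{\ast}$ does not follow from what you cite: Theorem \ref{Thm 1.2} only yields $\Vert u\Vert_{L^{p^{\ast}}}\lesssim\Vert u\Vert_{\dot{B}^{-s}}^{p/N}\Vert\nabla u\Vert_{L^{p}}^{(N-p)/N}$, and $L^{p^{\ast},r}$ is strictly stronger than $L^{p^{\ast}}$ when $r<p^{\ast}$; moreover the natural Lorentz refinement of that argument (the maximal-function route carried out in Section 4 of this paper) controls $\Vert u\Vert_{L^{p^{\ast},r}}$ by $\Vert\nabla u\Vert_{L^{p,\,r(N-p)/N}}$, which for $r<p^{\ast}$ is strictly stronger than $\Vert\nabla u\Vert_{L^{p}}$, so your asserted endpoint bound $\bigl(\int|\nabla u|^{p}\bigr)^{p/r}\mathcal{H}[u]^{1-p/r}\gtrsim\Vert u\Vert_{L^{p^{\ast},r}}^{p}$ for general (non-radial) $u$ is itself a nontrivial unproved claim, not a corollary of \cite{DLL22} plus the Lorentz--Sobolev embedding. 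Third, for $p\neq2$ the assertion that the substitution gives $\mathcal{D}[v]\gtrsim\int_{\mathbb{R}}|w'|^{p}\,ds$ ``modulo lower-order terms absorbable into $\mathcal{H}[v]$'' is unsubstantiated; the correct tool is a $p$-Bessel-pair/ground-state identity with a one-signed $\mathcal{C}_{p}$ remainder (cf. Lemma \ref{lem 3.2}), and the transformation of the Lorentz norm relies on radial monotonicity, none of which is carried out in your sketch. Finally, the two counterexample constructions for $\theta$ outside the interval are only described qualitatively; transferring the one-dimensional scaling obstruction back to admissible test functions for \eqref{HSL} (checking the constraint and actually computing both factors and the Lorentz norm on the glued profiles) still has to be done.
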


Our last purpose in this paper is to complement the above result of Dietze and
Nam in \cite{DN23} by proving a new version of the critical Hardy-Sobolev
interpolation inequality with the critical quadratic forms of the Hardy
inequality and Lorentz norms. More precisely, our next result can be read as follows:

\begin{theorem}
\label{T2}Let $2\leq q\leq p<N$, $r=\frac{Nq}{N-p}$ be such that $\max\left\{
\dfrac{N}{N-p},p\right\}  \leq r\leq\min\left\{  \dfrac{Np}{(N-p)^{2}}%
,\dfrac{Np}{N-p}\right\}  $ and $\dfrac{N-p}{N}\leq\theta\leq\dfrac{p}{Nq}$.
For all $u\in C_{0}^{\infty}(\mathbb{R}^{N})$, we have
\begin{align}
\left(  \left\Vert \nabla u\right\Vert _{p,q}^{q}-\omega_{N}^{\frac{q}{p}%
-1}\left(  \dfrac{N-p}{p}\right)  ^{q}\sup_{y\in\mathbb{R}^{N}}\int
_{\mathbb{R}^{N}}\dfrac{|u(x)|^{q}}{|x-y|^{N+q-\frac{Nq}{p}}}dx\right)
^{\theta}  &  \left(  \sup_{y\in\mathbb{R}^{N}}\int_{\mathbb{R}^{N}}%
\dfrac{|u(x)|^{q}}{|x-y|^{N+q-\frac{Nq}{p}}}dx\right)  ^{1-\theta}\nonumber\\
&  \geq C\left\Vert u\right\Vert _{L^{p^{\ast},r}}^{q}, \label{eq 3.4}%
\end{align}
with a constant $C=C(N,p,q,r,\theta)>0$ independent of $u$.
\end{theorem}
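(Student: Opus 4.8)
The plan is to follow the two-case strategy of Dietze--Nam, adapted to the Lorentz/$L^{p,q}$-gradient setting. Write $Q[u]$ for the quadratic-type form inside the first bracket, i.e.
\[
Q[u]:=\left\Vert \nabla u\right\Vert _{p,q}^{q}-\omega_{N}^{\frac{q}{p}-1}\left(\dfrac{N-p}{p}\right)^{q}\sup_{y}\int_{\mathbb{R}^{N}}\dfrac{|u(x)|^{q}}{|x-y|^{N+q-\frac{Nq}{p}}}dx,
\]
and let $H[u]:=\sup_{y}\int_{\mathbb{R}^{N}}|u(x)|^{q}|x-y|^{-(N+q-Nq/p)}dx$ denote the (shifted) Hardy term, so that the left-hand side is $Q[u]^{\theta}H[u]^{1-\theta}$. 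The first step is to check that $Q[u]\ge 0$: this is the Lorentz-refined Hardy inequality in the form $\left\Vert \nabla u\right\Vert_{p,q}^{q}\ge \omega_N^{q/p-1}\big(\tfrac{N-p}{p}\big)^q \int |u|^q|x|^{-(N+q-Nq/p)}dx$, which follows from the classical weighted Hardy inequality on the half-line $(0,\infty)$ applied to the decreasing rearrangement together with the identity $\|\nabla u\|_{p,q}\gtrsim \||x|^{-1+N/p}u^{*}( \omega_N|x|^N)\|$-type bound; I would cite or reprove this using the one-dimensional Hardy inequality with the sharp constant $\big(\tfrac{N-p}{p}\big)^q$ as in \cite{DN23}.

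Case 1 (the ``bulk'' case). Suppose $Q[u]\ge \tfrac12 \left\Vert \nabla u\right\Vert_{p,q}^{q}$. Then it suffices to prove
\[
\left\Vert \nabla u\right\Vert_{p,q}^{q\theta}\, H[u]^{1-\theta}\gtrsim \left\Vert u\right\Vert_{L^{p^{*},r}}^{q}.
\]
Here I would use the Gagliardo--Nirenberg inequality with Besov (or Morrey) norms, Theorem \ref{Thm 1.2}: one has $\left\Vert u\right\Vert_{\dot B^{-s}}\lesssim H[u]^{1/q}$ for the appropriate $s$ (the Hardy term $\int |u|^q |x-y|^{-\mu}dx$ controls a Morrey norm $M^{q,\mu}$, which embeds into $M^{1,\mu/q}$ and then into $\dot B^{-\mu/q}$ by Proposition \ref{lem 2.2}), and then interpolate $\left\Vert\nabla u\right\Vert_{p,q}$ against $\left\Vert u\right\Vert_{\dot B^{-s}}$ via \eqref{SB} to land on $\left\Vert u\right\Vert_{L^{r}}$ or, more sharply, directly in the Lorentz space $L^{p^{*},r}$ by keeping track of the Lorentz exponents through the heat-semigroup characterization. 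The conditions $\max\{N/(N-p),p\}\le r\le \min\{Np/(N-p)^2, Np/(N-p)\}$ and $\theta\ge (N-p)/N$ are exactly what makes the scaling match up; I would verify the exponent bookkeeping: $s$, $r$, and $\theta$ satisfy $k=\theta k_1+(1-\theta)k_2$, $\tfrac1p=\tfrac{\theta}{p_1}+\tfrac{1-\theta}{p_2}$ with the Besov "smoothness" $-s$ playing the role of a negative-order space.

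Case 2 (the ``critical'' case). Suppose $Q[u]\le \tfrac12 \left\Vert \nabla u\right\Vert_{p,q}^{q}$, so the mass is concentrated near a point; after a translation we may assume the near-extremal point is the origin. The natural move, following \cite{DN23}, is to reduce to radial functions and then to a one-dimensional Caffarelli--Kohn--Nirenberg inequality. Since a P\'olya--Szeg\H{o} principle for the Lorentz gradient norm $\left\Vert \nabla u\right\Vert_{p,q}$ is delicate, I would instead invoke the Talenti-type comparison principle (the higher-order version developed in this paper, specialized to first order) to pass to the radial symmetric decreasing rearrangement $u^{\#}$: one has $H[u]\le H[u^{\#}]$ (the Hardy term only increases under rearrangement when the singularity is at the origin) and $\left\Vert \nabla u^{\#}\right\Vert_{p,q}\le \left\Vert \nabla u\right\Vert_{p,q}$, while $\left\Vert u^{\#}\right\Vert_{L^{p^{*},r}}=\left\Vert u\right\Vert_{L^{p^{*},r}}$. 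With $u$ radial, $u(x)=U(|x|)$, I would use the nonlinear ground-state substitution $U(r)=r^{-(N-p)/p}W(\log r)$ (as in \cite{FLS08,FS08}) which turns $Q[u]$ into a clean shifted one-dimensional energy $\int_{\mathbb R}|W'|^q\,dt$ (up to harmless lower-order terms) and $H[u]$ into $\int_{\mathbb R}|W|^q\,dt$, reducing \eqref{eq 3.4} to a one-dimensional interpolation inequality of the form
\[
\Big(\int_{\mathbb R}|W'|^q\,dt\Big)^{\theta}\Big(\int_{\mathbb R}|W|^q\,dt\Big)^{1-\theta}\gtrsim \Big(\int_{\mathbb R}|W|^{\lambda}\,dt\Big)^{q/\lambda}
\]
with $\lambda=r$ (taking into account that on the line the $L^{p^{*},r}$-norm of the original function becomes an ordinary $L^{r}(\mathbb R,dt)$-norm after the change of variables $s=\omega_N r^N$). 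This last inequality is a standard Gagliardo--Nirenberg inequality on $\mathbb R$ valid precisely under the given endpoint constraints on $\theta$ and $r$; I would cite \cite{N41} or prove it by the usual scaling-plus-interpolation argument.

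The main obstacle I anticipate is Case 2 --- specifically, justifying the reduction to radial functions. Unlike in \cite{DN23}, where the first-order $L^p$ Dirichlet form obeys the classical P\'olya--Szeg\H{o} inequality, here the Lorentz norm $\left\Vert \nabla u\right\Vert_{p,q}$ with $q<p$ does \emph{not} automatically decrease under symmetric decreasing rearrangement, so one genuinely needs the Talenti comparison route: show that the solution $w$ of $-\Delta w=|\text{(source built from }u)|$, radialized, dominates $u^{\#}$ pointwise and has comparable Lorentz-gradient norm. Making this quantitative --- i.e.\ controlling $\left\Vert \nabla u^{\#}\right\Vert_{p,q}$ by $\left\Vert \nabla u\right\Vert_{p,q}$ with constant $1$ (or at least a constant independent of $u$, which would still suffice since $C$ is allowed to depend on $N,p,q,r,\theta$) --- is the technical heart of the argument, and it is also where the upper constraint $r\le \min\{Np/(N-p)^2,Np/(N-p)\}$ will be used to keep the rearranged quantities finite and the change of variables admissible.
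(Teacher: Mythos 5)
Your two-case skeleton matches the paper's, and Case~1 (interpolating the Besov/Morrey control given by the Hardy term against $\|\nabla u\|_{p,q}$ via Theorem~\ref{Thm 1.2} and Proposition~\ref{lem 2.2}) is essentially the same as the paper's argument. However, your Case~2 rests on a mistaken premise that would derail the proof.

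You assert that the P\'olya--Szeg\H{o} inequality for the Lorentz gradient norm $\|\nabla u\|_{p,q}$ ``with $q<p$ does not automatically decrease under symmetric decreasing rearrangement,'' and you therefore propose routing through a first-order Talenti comparison principle. This is backwards: Alvino's result \cite{A77}, which the paper cites and uses at exactly this point, establishes $\|\nabla u^{\ast}\|_{p,q}\le \|\nabla u\|_{p,q}$ precisely when $1\le q\le p$; it is the regime $q>p$ that is problematic. Since the hypotheses of Theorem~\ref{T2} impose $2\le q\le p$, the Lorentz P\'olya--Szeg\H{o} inequality applies directly, the radial reduction is immediate, and the Talenti machinery (Proposition~\ref{prop 2.3}) is entirely unnecessary here --- in the paper it is reserved for the higher-order Theorem~\ref{T1}. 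Worse, the Talenti detour you sketch would not deliver what you need: Talenti's comparison gives a pointwise bound $u^{\ast}\le w$ for the solution $w$ of a symmetrized Poisson problem, not an inequality between $\|\nabla u^{\ast}\|_{p,q}$ and $\|\nabla u\|_{p,q}$, so the ``technical heart'' you flag would remain unresolved along that route.

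Once you are in the radial setting, your proposed substitution $U(r)=r^{-(N-p)/p}W(\log r)$ and appeal to a one-dimensional Gagliardo--Nirenberg inequality is in the same spirit as the paper, but the paper proceeds somewhat differently: it invokes a $p$-Bessel pair identity (Lemma~\ref{lem 3.2}, from \cite{DuLL22}) with $\varphi=r^{-(N-p)/p}$ to bound the Hardy remainder from below by $c_q\int_{0}^{\infty}|v'|^{q}s^{q-1}\,ds$ with $v=u/\varphi$, and then closes by a short H\"older/bootstrap argument combined with the representation $\|u\|_{p^{\ast},s}^{s}\sim\int_0^\infty |f(r)|^s r^{-1}\,dr$, rather than a clean $1$D Gagliardo--Nirenberg citation. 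Both routes require careful exponent bookkeeping, and the upper bound $r\le \min\{Np/(N-p)^2,Np/(N-p)\}$ arises in the paper from the requirement that the auxiliary exponent $\beta=q\theta/(1-q\theta)$ satisfy $(\beta+1)q\le r$, not from rearrangement admissibility as you suggest. So: correct the P\'olya--Szeg\H{o} claim, drop the Talenti detour, and the remainder of your outline can be made to work.
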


We note that the first factor on the left hand side of the above inequality is
nonnegative thanks to the following Hardy inequality with Lorentz norm:

\begin{propositiona}
[Theorem $1.2$, \cite{D22}]Let $1\leq q\leq p<N$. For $u\in C_{0}^{\infty
}(\mathbb{R}^{N})$, there holds
\[
\left\Vert \nabla u\right\Vert _{p,q}^{q}\geq\omega_{N}^{\frac{q}{p}-1}\left(
\dfrac{N-p}{p}\right)  ^{q}\int_{\mathbb{R}^{N}}\dfrac{|u|^{q}}{|x|^{N+q-\frac
{Nq}{p}}}dx.
\]
The constant $\omega_{N}^{\frac{q}{p}-1}\left(  \frac{N-p}{p}\right)  ^{q}$ is
sharp and cannot be attained by nontrivial functions in the Sobolev-Lorentz
space $W^{1}L^{p,q}(\mathbb{R}^{N})$, which is the closure of smooth compactly
supported functions with respect to the norm $\left\Vert \nabla u\right\Vert
_{p,q}+\left\Vert u\right\Vert _{p,q}$.
\end{propositiona}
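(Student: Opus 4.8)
Throughout write $A(u)=\Vert \nabla u\Vert_{p,q}^{q}$, $B(u)=\sup_{y\in\mathbb{R}^{N}}\int_{\mathbb{R}^{N}}|u(x)|^{q}|x-y|^{-(N+q-Nq/p)}\,dx$, $c_{0}=\omega_{N}^{q/p-1}\bigl(\tfrac{N-p}{p}\bigr)^{q}$, $D(u)=A(u)-c_{0}B(u)$ and $S(u)=\Vert u\Vert_{L^{p^{\ast},r}}^{q}$, so that the left side of \eqref{eq 3.4} equals $D(u)^{\theta}B(u)^{1-\theta}$. By the Hardy-Lorentz inequality of \cite{D22} quoted above we have $D(u)\geq0$, and an elementary change of variables shows that $A$, $B$, $D$ and $S$ all transform by the factor $\lambda^{q(p-N)/p}$ under $u\mapsto u(\lambda\,\cdot\,)$ while being translation invariant once the supremum is taken; hence \eqref{eq 3.4} is scale- and translation-invariant. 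The plan is to fix a small $\varepsilon>0$ and to treat separately the regimes $D(u)\geq\varepsilon A(u)$ and $D(u)<\varepsilon A(u)$.

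In the first regime, where the Hardy deficit carries a fixed proportion of the Lorentz Dirichlet energy, $D(u)^{\theta}B(u)^{1-\theta}\geq\varepsilon^{\theta}\Vert \nabla u\Vert_{p,q}^{q\theta}B(u)^{1-\theta}$, so it suffices to prove the Gagliardo-Nirenberg inequality $\Vert \nabla u\Vert_{p,q}^{q\theta}B(u)^{1-\theta}\gtrsim\Vert u\Vert_{L^{p^{\ast},r}}^{q}$. Restricting the integral defining $B(u)$ to a ball $B(y,R)$ and using $|x-y|^{-(N+q-Nq/p)}\geq R^{-(N+q-Nq/p)}$ there gives $B(u)\geq\Vert |u|^{q}\Vert_{M^{1,\,Nq/p-q}}$, so it is enough to establish the Gagliardo-Nirenberg inequality involving a Lorentz gradient norm, a Morrey norm and a Lorentz target norm; this is precisely the Lorentz-refined version (see \cite{Dao23,DLL22}, and Proposition \ref{lem 2.2}) of the Morrey-Gagliardo-Nirenberg inequality of \cite{PP14}. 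The homogeneities match, and the exponent range in which that inequality holds is compatible with $\tfrac{N-p}{N}\leq\theta\leq\tfrac{p}{Nq}$, which disposes of the first regime.

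The substantial case is $D(u)<\varepsilon A(u)$, in which the Hardy-Lorentz inequality is almost saturated and $A(u)\asymp B(u)$. Since $N+q-Nq/p<N$, the map $y\mapsto\int_{\mathbb{R}^{N}}|u(x)|^{q}|x-y|^{-(N+q-Nq/p)}\,dx$ is continuous and vanishes at infinity, so after a translation I may assume it attains its maximum at the origin; then $B(u)=\int_{\mathbb{R}^{N}}|u|^{q}|x|^{-(N+q-Nq/p)}\,dx$ and $D(u)$ is the \emph{centred} Hardy-Lorentz deficit. I would then pass to the radial decreasing rearrangement $u^{\ast}$: by the P\'{o}lya-Szeg\"{o} principle for Lorentz gradient norms, valid because $q\leq p$ (this is also the mechanism behind \cite{D22}), $\Vert \nabla u^{\ast}\Vert_{p,q}\leq\Vert \nabla u\Vert_{p,q}$, while the Hardy-Littlewood and Riesz rearrangement inequalities give $\int|u|^{q}|x|^{-(N+q-Nq/p)}\,dx\leq\int(u^{\ast})^{q}|x|^{-(N+q-Nq/p)}\,dx=\sup_{y}\int(u^{\ast})^{q}|x-y|^{-(N+q-Nq/p)}\,dx$. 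The additional point, and the only place where the near-saturation hypothesis enters, is that a quantitative stability form of the Hardy-Lorentz inequality forces $u$ to be comparable, up to a single dilation, to the virtual extremal $|x|^{-(N-p)/p}$, so that the Riesz-potential term changes only by a bounded factor under rearrangement; granting this, it is enough to prove \eqref{eq 3.4} for radial decreasing functions.

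For radial decreasing $u(x)=U(|x|)$, I would carry out the substitution adapted to the Hardy-Lorentz inequality, $u(x)=|x|^{-(N-p)/p}V(\log|x|)$, evaluate the Lorentz gradient norm on the monotone radial profile, and compute in the logarithmic variable $t=\log|x|$. The centred deficit then becomes a positive constant times $\int_{\mathbb{R}}\bigl(|\dot V-cV|^{q}-c^{q}|V|^{q}\bigr)\,dt$ with $c=\tfrac{N-p}{p}$; by the convexity inequality $|w+z|^{q}\geq|w|^{q}+q|w|^{q-2}w\cdot z+\gamma_{q}|z|^{q}$ (valid for $q\geq2$) applied with $w=-cV$, $z=\dot V$, together with $\int_{\mathbb{R}}|V|^{q-2}V\dot V\,dt=0$, this is bounded below by $\gamma_{q}\int_{\mathbb{R}}|\dot V|^{q}\,dt$. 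Simultaneously $\int|u|^{q}|x|^{-(N+q-Nq/p)}\,dx$ becomes a constant times $\int_{\mathbb{R}}|V|^{q}\,dt$ and $\Vert u\Vert_{L^{p^{\ast},r}}$ a constant times the one-dimensional Lorentz norm $\Vert V\Vert_{L^{q,r}(\mathbb{R})}$. Thus in the radial case \eqref{eq 3.4} reduces to a one-dimensional Caffarelli-Kohn-Nirenberg/Gagliardo-Nirenberg inequality $\Vert \dot V\Vert_{L^{q}(\mathbb{R})}^{q\theta}\Vert V\Vert_{L^{q}(\mathbb{R})}^{q(1-\theta)}\gtrsim\Vert V\Vert_{L^{q,r}(\mathbb{R})}^{q}$ over the class of profiles arising from radial decreasing $u$ (which satisfy $\dot V\leq\tfrac{N-p}{p}V$, a constraint that restores a scaling obstruction); the admissible range for this one-dimensional inequality is exactly $\tfrac{N-p}{N}\leq\theta\leq\tfrac{p}{Nq}$ together with the stated conditions on $r$, in the spirit of \cite{N41} and Proposition \ref{HCKN}. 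The two genuinely delicate steps I expect are (i) the quantitative stability of the Hardy-Lorentz inequality that licenses the reduction to radial functions, since the supremum term is not rearrangement invariant in general, and (ii) the handling of the Lorentz-Sobolev norm after the substitution, since Lorentz norms do not decompose over super-level sets as cleanly as $L^{q}$ norms, forcing one either to exploit the monotonicity of the radial profile or to work with one-dimensional rearrangements throughout, and to run the nonlinear ground-state estimate with the Lorentz weight present.
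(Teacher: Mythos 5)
Your proposal does not prove the statement it is attached to. The statement is the Hardy inequality with Lorentz norm quoted from \cite{D22}: the bound $\left\Vert \nabla u\right\Vert _{p,q}^{q}\geq\omega_{N}^{\frac{q}{p}-1}\left(\frac{N-p}{p}\right)^{q}\int_{\mathbb{R}^{N}}|u|^{q}|x|^{-(N+q-\frac{Nq}{p})}dx$ together with the sharpness of the constant and its non-attainment in $W^{1}L^{p,q}(\mathbb{R}^{N})$. What you have written is instead an outline of a proof of the interpolation inequality \eqref{eq 3.4} of Theorem \ref{T2}, and it explicitly \emph{invokes} the statement to be proved (``by the Hardy-Lorentz inequality of \cite{D22} quoted above we have $D(u)\geq0$''), so as an argument for the Proposition it is circular; moreover it never touches the optimality of the constant or the non-existence of extremizers, which are part of the claim. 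For the record, the paper itself gives no proof of this Proposition either: it is imported from \cite{D22}. If you wanted a proof within the paper's own toolkit, the natural route is: reduce to radial decreasing $u$ by the Lorentz P\'{o}lya-Szeg\"{o} inequality of \cite{A77} (valid since $q\leq p$) and the Hardy-Littlewood rearrangement inequality; for radial $u$ use the identity $\Vert\nabla u\Vert_{p,q}^{q}=\omega_{N}^{\frac{q}{p}-1}\int_{\mathbb{R}^{N}}|\nabla u|^{q}|x|^{-N(1-\frac{q}{p})}dx$ and the $p$-Bessel pair computation of Section 4 (Lemma \ref{lem 3.2} with $\varphi=r^{-\frac{N-p}{p}}$), whose remainder term is nonnegative; then verify sharpness and non-attainment separately, e.g.\ by testing with truncations of $|x|^{-\frac{N-p}{p}}$ and by noting that equality would force the remainder to vanish, i.e.\ $u$ proportional to $|x|^{-\frac{N-p}{p}}$, which is not in $W^{1}L^{p,q}$.

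Even if one reads your text as a blind proof of Theorem \ref{T2}, it diverges from the paper exactly where you flag difficulty: in the near-saturated regime you reduce to radial functions by appealing to an unproven ``quantitative stability'' of the Hardy-Lorentz inequality to control the supremum term, and this step is left open. The paper needs no stability statement: in its Case 2 it uses only the Hardy-Littlewood inequality ($\sup_{y}\int|u|^{q}|x-y|^{-(N+q-\frac{Nq}{p})}dx\leq\int|u^{\ast}|^{q}|x|^{-(N+q-\frac{Nq}{p})}dx$), the Lorentz P\'{o}lya-Szeg\"{o} inequality, equimeasurability of the $L^{p^{\ast},r}$ norm, and the elementary monotonicity of $x\mapsto(A-x)^{\theta}x^{1-\theta}$ on $\left[(1-\theta)A,A\right]$, which together reduce the claim to radial functions; the radial case is then settled via the Bessel-pair identity, H\"{o}lder, and the one-dimensional Lorentz-norm formula from \cite{DN23}, not via a constrained one-dimensional Gagliardo-Nirenberg inequality as you suggest. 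Your first regime (deficit comparable to the energy, treated through Morrey/Besov-type Gagliardo-Nirenberg bounds) does match the paper's Case 1 in spirit.
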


Also, when $p=q$ in Theorem \ref{T2}, we deduce that

\begin{corollary}
Let $\max\left\{  2,N-1\right\}  \leq p<N$. Then%
\begin{align*}
&  \left(  \left\Vert \nabla u\right\Vert _{p}^{p}-\left(  \dfrac{N-p}%
{p}\right)  ^{p}\sup_{y\in\mathbb{R}^{N}}\int_{\mathbb{R}^{N}}\dfrac
{|u(x)|^{p}}{|x-y|^{p}}dx\right)  ^{\theta}\left(  \sup_{y\in\mathbb{R}^{N}%
}\int_{\mathbb{R}^{N}}\dfrac{|u(x)|^{p}}{|x-y|^{p}}dx\right)  ^{1-\theta}\\
&  \geq C\left\Vert u\right\Vert _{L^{p^{\ast}}}^{p}%
\end{align*}
with
\[
\dfrac{N-p}{N}\leq\theta\leq\dfrac{1}{N}.
\]

\end{corollary}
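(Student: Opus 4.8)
The plan is to obtain the Corollary simply by specializing Theorem \ref{T2} to the diagonal case $q=p$; the only thing to check is that all the hypotheses of Theorem \ref{T2} are met and that its conclusion collapses to the stated one. First I would set $q=p$ and compute the relevant quantities: $r=\frac{Nq}{N-p}=\frac{Np}{N-p}=p^{\ast}$, so $\left\Vert u\right\Vert _{L^{p^{\ast},r}}=\left\Vert u\right\Vert _{L^{p^{\ast},p^{\ast}}}=\left\Vert u\right\Vert _{L^{p^{\ast}}}$; the weight exponent $N+q-\frac{Nq}{p}=N+p-N=p$, so the Hardy integral becomes $\sup_{y\in\mathbb{R}^{N}}\int_{\mathbb{R}^{N}}\frac{|u(x)|^{p}}{|x-y|^{p}}\,dx$; the prefactor $\omega_{N}^{q/p-1}=\omega_{N}^{0}=1$; and $\left\Vert \nabla u\right\Vert _{p,q}^{q}=\left\Vert \nabla u\right\Vert _{p,p}^{p}=\left\Vert \nabla u\right\Vert _{p}^{p}$. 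Hence the left-hand side of \eqref{eq 3.4} becomes exactly the left-hand side of the Corollary, and the admissible range $\frac{N-p}{N}\le\theta\le\frac{p}{Nq}$ becomes $\frac{N-p}{N}\le\theta\le\frac{1}{N}$, as claimed.

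Next I would verify the constraint $\max\{\frac{N}{N-p},p\}\le r\le\min\{\frac{Np}{(N-p)^{2}},\frac{Np}{N-p}\}$ at the endpoint value $r=\frac{Np}{N-p}$. The two lower bounds hold because $\frac{N}{N-p}\le\frac{Np}{N-p}$ is equivalent to $1\le p$ and $p\le\frac{Np}{N-p}$ is equivalent to $N-p\le N$, both trivially true. For the upper bound, $\frac{Np}{N-p}\le\frac{Np}{N-p}$ is an equality, while $\frac{Np}{N-p}\le\frac{Np}{(N-p)^{2}}$ is equivalent to $(N-p)^{2}\le N-p$, i.e.\ $N-p\le1$, i.e.\ $p\ge N-1$. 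Combined with the requirement $2\le q=p$ coming from the hypothesis $2\le q\le p$ of Theorem \ref{T2}, this is precisely the assumption $\max\{2,N-1\}\le p<N$ of the Corollary. Thus all hypotheses of Theorem \ref{T2} are satisfied, and applying it with $q=p$ yields the Corollary.

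There is no genuine obstacle here beyond this bookkeeping: the entire content of the Corollary is carried by Theorem \ref{T2}. The one substantive point worth emphasizing in the write-up is that the endpoint $r=p^{\ast}$ lies inside the admissible window for $r$ exactly when $p\ge N-1$, which explains why the Corollary is restricted to dimensions and exponents with $\max\{2,N-1\}\le p<N$ rather than to all $p\in[2,N)$.
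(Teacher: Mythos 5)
Your proposal is correct and is exactly the paper's own derivation: the Corollary is obtained by specializing Theorem \ref{T2} to $q=p$, and your bookkeeping (verifying $r=p^{\ast}$ lies in the admissible window precisely when $p\geq N-1$, hence the hypothesis $\max\{2,N-1\}\leq p<N$) matches what the paper implicitly assumes when it states the Corollary as an immediate consequence.
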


\medskip

Our paper is organized as follows: In Section 2, we will use Fourier
techniques to prove Theorem \ref{T3.1} and Theorem \ref{T3.2}, which are of
independent interest, and then use them to prove our first main result of the
paper, namely, Theorem \ref{T3}. In Section 3, we will prove the second
principal result of our paper, that is Theorem \ref{T1}. Finally, the critical
Hardy-Sobolev interpolation inequality with the critical quadratic forms of
the Hardy inequality and Lorentz norms Theorem \ref{T2} will be established in
Section 4.

\section{Proofs of Theorem \ref{T3.1}, Theorem \ref{T3.2} and Theorem
\ref{T3}}

\begin{proof}
[Proof of Theorem \ref{T3.1}]Since $v$ is radial, we have
\begin{align*}
\mathcal{F}_{N+2}\left(  v\right)  \left(  r\right)   &  =2\pi r^{-\frac{N}%
{2}}\int_{0}^{\infty}V\left(  s\right)  J_{\frac{N}{2}}\left(  2\pi rs\right)
s^{\frac{N+2}{2}}ds\\
&  =2\pi r^{-\frac{N}{2}}\int_{0}^{\infty}U^{\prime}\left(  s\right)
J_{\frac{N}{2}}\left(  2\pi rs\right)  s^{\frac{N}{2}}ds\\
&  =-\left(  2\pi\right)  ^{2-\frac{N+2}{2}}r^{-N}\int_{0}^{\infty}U\left(
s\right)  \frac{d\left(  J_{\frac{N}{2}}\left(  2\pi rs\right)  \left(  2\pi
rs\right)  ^{\frac{N}{2}}\right)  }{ds}ds.
\end{align*}
Here $J_{\nu}$ is the classical Bessel function of order $\nu$. See, for
instance, \cite{SW71}. Note that we can apply the integration by parts here
because of the assumptions on $U$ and the fact that $\left\vert J_{\frac{N}%
{2}}\left(  s\right)  \right\vert \lesssim\frac{s^{\frac{N}{2}}}{\left(
1+s\right)  ^{\frac{N}{2}+\frac{1}{2}}}$, $s>0$. Now, by applying the identity
(see \cite{Watson})
\[
\frac{d}{dx}\left[  x^{\nu}J_{\nu}\left(  x\right)  \right]  =x^{\nu}J_{\nu
-1}\left(  x\right)  ,
\]
we obtain%
\begin{align*}
\mathcal{F}_{N+2}\left(  v\right)  \left(  r\right)   &  =-\left(
2\pi\right)  ^{2-\frac{N+2}{2}}r^{-N}\int_{0}^{\infty}U\left(  s\right)
J_{\frac{N-2}{2}}\left(  2\pi rs\right)  \left(  2\pi rs\right)  ^{\frac{N}%
{2}}2\pi rds\\
&  =-\left(  2\pi\right)  ^{2}r^{-\frac{N-2}{2}}\int_{0}^{\infty}U\left(
s\right)  J_{\frac{N-2}{2}}\left(  2\pi rs\right)  s^{\frac{N}{2}}ds\\
&  =-2\pi\mathcal{F}_{N}\left(  u\right)  .
\end{align*}
Therefore
\begin{align*}
\int_{\mathbb{R}^{N}}|\Delta^{\alpha+\frac{1}{2}}u|^{2}dx  &  =\left(
2\pi\right)  ^{4\alpha+2}\int_{\mathbb{R}^{N}}\left\vert \xi\right\vert
^{4\alpha+2}\left\vert \mathcal{F}_{N}\left(  u\right)  \left(  \xi\right)
\right\vert ^{2}d\xi\\
&  =\left(  2\pi\right)  ^{4\alpha+2}|\mathbb{S}^{N-1}|\int_{0}^{\infty
}r^{N-1+4\alpha+2}\left\vert \mathcal{F}_{N}\left(  u\right)  \left(
r\right)  \right\vert ^{2}dr\\
&  =\left(  2\pi\right)  ^{4\alpha}|\mathbb{S}^{N-1}|\int_{0}^{\infty
}r^{N+1+4\alpha}\left\vert \mathcal{F}_{N+2}\left(  v\right)  \left(
r\right)  \right\vert ^{2}dr\\
&  =\frac{|\mathbb{S}^{N-1}|}{|\mathbb{S}^{N+1}|}\int_{\mathbb{R}^{N+2}%
}|\Delta^{\alpha}v|^{2}dx.
\end{align*}

\end{proof}

\begin{proof}
[Proof of Theorem \ref{T3.2}]We will prove by induction. Indeed, for the base
case, it is easy to see that
\[
\int_{\mathbb{R}^{N}}|\nabla u|^{2}dx\geq|\mathbb{S}^{N-1}|\int_{0}^{\infty
}\left\vert U^{\prime}\left(  r\right)  \right\vert ^{2}r^{N-1}dr
\]
for all $N\geq1$. Also, for all $N\geq1$,
\begin{align*}
\int_{\mathbb{R}^{N}}|\Delta u|^{2}dx  &  =|\mathbb{S}^{N-1}|\int_{0}^{\infty
}\left\vert U^{^{\prime\prime}}\left(  r\right)  +\frac{N-1}{r}U^{\prime
}\left(  r\right)  \right\vert ^{2}r^{N-1}dr\\
&  =|\mathbb{S}^{N-1}|\int_{0}^{\infty}\left\vert U^{^{\prime\prime}}\left(
r\right)  \right\vert ^{2}r^{N-1}dr+|\mathbb{S}^{N-1}|\left(  N-1\right)
\int_{0}^{\infty}\left\vert U^{\prime}\left(  r\right)  \right\vert
^{2}r^{N-3}dr\\
&  \geq|\mathbb{S}^{N-1}|\int_{0}^{\infty}\left\vert U^{^{\prime\prime}%
}\left(  r\right)  \right\vert ^{2}r^{N-1}dr.
\end{align*}
Now, assume that
\[
\int_{\mathbb{R}^{N}}|\nabla\Delta^{k-1}u|^{2}dx\geq|\mathbb{S}^{N-1}|\int
_{0}^{\infty}\left\vert U^{\left(  2k-1\right)  }\left(  r\right)  \right\vert
^{2}r^{N-1}dr
\]
for $N\geq2k-2$ and
\[
\int_{\mathbb{R}^{N}}|\Delta^{k}u|^{2}dx\geq|\mathbb{S}^{N-1}|\int_{0}%
^{\infty}\left\vert U^{\left(  2k\right)  }\left(  r\right)  \right\vert
^{2}r^{N-1}dr
\]
for $N\geq2k-1$. Then we will first prove that
\[
\int_{\mathbb{R}^{N}}|\nabla\Delta^{k}u|^{2}dx\geq|\mathbb{S}^{N-1}|\int
_{0}^{\infty}\left\vert U^{\left(  2k+1\right)  }\left(  r\right)  \right\vert
^{2}r^{N-1}dr
\]
for $N\geq2k$. Indeed, let $V=\frac{U^{\prime}}{r}$. Then, by Theorem
\ref{T3.1}, we get%
\begin{align*}
\int_{\mathbb{R}^{N}}|\nabla\Delta^{k}u|^{2}dx  &  =\left(  2\pi\right)
^{4k+2}\int_{\mathbb{R}^{N}}\left\vert \xi\right\vert ^{4k+2}\left\vert
\mathcal{F}_{N}\left(  u\right)  \left(  \xi\right)  \right\vert ^{2}d\xi\\
&  =\left(  2\pi\right)  ^{4k+2}|\mathbb{S}^{N-1}|\int_{0}^{\infty
}r^{N-1+4k+2}\left\vert \mathcal{F}_{N}\left(  u\right)  \left(  r\right)
\right\vert ^{2}dr\\
&  =\left(  2\pi\right)  ^{4k}|\mathbb{S}^{N-1}|\int_{0}^{\infty}%
r^{N-1+4k+2}\left\vert \mathcal{F}_{N+2}\left(  v\right)  \left(  r\right)
\right\vert ^{2}dr\\
&  =\frac{|\mathbb{S}^{N-1}|}{|\mathbb{S}^{N+1}|}\int_{\mathbb{R}^{N+2}%
}|\Delta^{k}v|^{2}dx.
\end{align*}
Therefore,
\[
\int_{\mathbb{R}^{N}}|\nabla\Delta^{k}u|^{2}dx=\frac{|\mathbb{S}^{N-1}%
|}{|\mathbb{S}^{N+1}|}\int_{\mathbb{R}^{N+2}}|\Delta^{k}v|^{2}dx\geq
|\mathbb{S}^{N-1}|\int_{0}^{\infty}\left\vert V^{\left(  2k\right)  }\left(
r\right)  \right\vert ^{2}r^{N+1}dr.
\]
Note that since $U^{\prime}=rV$, we have $U^{\left(  2k+1\right)  }\left(
r\right)  =rV^{\left(  2k\right)  }\left(  r\right)  +2kV^{\left(
2k-1\right)  }$. Therefore
\begin{align*}
\int_{0}^{\infty}\left\vert U^{\left(  2k+1\right)  }\left(  r\right)
\right\vert ^{2}r^{N-1}dr  &  =\int_{0}^{\infty}\left\vert V^{\left(
2k\right)  }\left(  r\right)  \right\vert ^{2}r^{N+1}dr+2k\left(  2k-N\right)
\int_{0}^{\infty}\left\vert V^{\left(  2k-1\right)  }\left(  r\right)
\right\vert ^{2}r^{N-1}dr\\
&  \leq\int_{0}^{\infty}\left\vert V^{\left(  2k\right)  }\left(  r\right)
\right\vert ^{2}r^{N+1}dr.
\end{align*}
Next, we will show that
\[
\int_{\mathbb{R}^{N}}|\Delta^{k+1}u|^{2}dx\geq|\mathbb{S}^{N-1}|\int
_{0}^{\infty}\left\vert U^{\left(  2k+2\right)  }\left(  r\right)  \right\vert
^{2}r^{N-1}dr
\]
for $N\geq2k+1$. Indeed, again, we let $V=\frac{U^{\prime}}{r}$. Then, by
Theorem \ref{T3.1}, we obtain%
\begin{align*}
\int_{\mathbb{R}^{N}}|\Delta^{k+1}u|^{2}dx  &  =\left(  2\pi\right)
^{4k+4}|\mathbb{S}^{N-1}|\int_{0}^{\infty}r^{N-1+4k+4}\left\vert
\mathcal{F}_{N}\left(  u\right)  \left(  r\right)  \right\vert ^{2}dr\\
&  =\left(  2\pi\right)  ^{4k+2}|\mathbb{S}^{N-1}|\int_{0}^{\infty
}r^{N-1+4k+4}\left\vert \mathcal{F}_{N+2}\left(  v\right)  \left(  r\right)
\right\vert ^{2}dr\\
&  =\frac{|\mathbb{S}^{N-1}|}{|\mathbb{S}^{N+1}|}\int_{\mathbb{R}^{N+2}%
}|\nabla\Delta^{k}v|^{2}dx.
\end{align*}
Hence%
\[
\int_{\mathbb{R}^{N}}|\Delta^{k+1}u|^{2}dx=\frac{|\mathbb{S}^{N-1}%
|}{|\mathbb{S}^{N+1}|}\int_{\mathbb{R}^{N+2}}|\nabla\Delta^{k}v|^{2}%
dx\geq|\mathbb{S}^{N-1}|\int_{0}^{\infty}\left\vert V^{\left(  2k+1\right)
}\left(  r\right)  \right\vert ^{2}r^{N+1}dr.
\]
Similarly, we have $U^{\left(  2k+2\right)  }\left(  r\right)  =rV^{\left(
2k+1\right)  }\left(  r\right)  +\left(  2k+1\right)  V^{\left(  2k\right)  }$
and
\begin{align*}
\int_{0}^{\infty}\left\vert U^{\left(  2k+2\right)  }\left(  r\right)
\right\vert ^{2}r^{N-1}dr  &  =\int_{0}^{\infty}\left\vert V^{\left(
2k+1\right)  }\left(  r\right)  \right\vert ^{2}r^{N+1}dr+\left(  2k+1\right)
\left(  2k+1-N\right)  \int_{0}^{\infty}\left\vert V^{\left(  2k\right)
}\left(  r\right)  \right\vert ^{2}r^{N-1}dr\\
&  \leq\int_{0}^{\infty}\left\vert V^{\left(  2k+1\right)  }\left(  r\right)
\right\vert ^{2}r^{N+1}dr.
\end{align*}

\end{proof}

\begin{proof}
[Proof of Theorem \ref{T3}]We will first prove (\ref{2m-1i}). Indeed, let
\[
A=\int_{\mathbb{R}^{N}}|\Delta^{m}u|^{2}dx,\;\;B=C_{1}(N,m)\int_{\mathbb{R}%
^{N}}\dfrac{|u|^{2}}{|x|^{4m}}dx.
\]
\textbf{Case 1:} $\theta=\frac{1}{N}$.

If $(1-\theta)A<B\leq A$, then it is easy to check that
\[
\dfrac{d}{dB}((A-B)^{\theta}B^{1-\theta})=((1-\theta)A-B)(A-B)^{1-\theta
}B^{-\theta}.
\]
Hence, the function $(A-B)^{\theta}B^{1-\theta}$ is non-increasing with
respect to $B$ if $0\leq(1-\theta)A<B\leq A$. Therefore, by the Hardy-Rellich
inequalities (Theorem \ref{Thm 1.3}),
\begin{align*}
B=C_{1}(N,m)\int_{\mathbb{R}^{N}}\dfrac{|u|^{2}}{|x|^{4m}}dx  &  \leq
\dfrac{N^{2}}{4}\int_{\mathbb{R}^{N}}\dfrac{|\nabla\Delta^{m-1}u|^{2}}%
{|x|^{2}}dx\\
&  \leq\int_{\mathbb{R}^{N}}|\Delta^{m}u|^{2}dx=A\text{,}%
\end{align*}
we deduce that with $v=\Delta^{m-1}u:$%
\begin{align*}
&  \left(  \int_{\mathbb{R}^{N}}|\Delta^{m}u|^{2}dx-C_{1}(N,m)\int
_{\mathbb{R}^{N}}\dfrac{|u|^{2}}{|x|^{4m}}dx\right)  ^{\theta}\left(
\int_{\mathbb{R}^{N}}\dfrac{|u|^{2}}{|x|^{4m}}dx\right)  ^{1-\theta}\\
&  \geq\left(  \int_{\mathbb{R}^{N}}|\Delta^{m}u|^{2}dx-\dfrac{N^{2}}{4}%
\int_{\mathbb{R}^{N}}\dfrac{|\nabla\Delta^{m-1}u|^{2}}{|x|^{2}}dx\right)
^{\theta}\left(  \int_{\mathbb{R}^{N}}\dfrac{|\nabla\Delta^{m-1}u|^{2}%
}{|x|^{2}}dx\right)  ^{1-\theta}\\
&  =\left(  \int_{\mathbb{R}^{N}}|\Delta v|^{2}dx-\dfrac{N^{2}}{4}%
\int_{\mathbb{R}^{N}}\dfrac{|\nabla v|^{2}}{|x|^{2}}dx\right)  ^{\theta
}\left(  \int_{\mathbb{R}^{N}}\dfrac{|\nabla v|^{2}}{|x|^{2}}dx\right)
^{1-\theta}.
\end{align*}
We will now show a stronger result that
\begin{equation}
\left(  \int_{\mathbb{R}^{N}}|\Delta v|^{2}dx-\dfrac{N^{2}}{4}\int
_{\mathbb{R}^{N}}\dfrac{|\nabla v|^{2}}{|x|^{2}}dx\right)  ^{\theta}\left(
\int_{\mathbb{R}^{N}}\dfrac{|\nabla v|^{2}}{|x|^{2}}dx\right)  ^{1-\theta
}\gtrsim\left\Vert \nabla v\right\Vert _{L^{\frac{2N}{N-2}}}^{2} \label{2.2.1}%
\end{equation}
for all radial functions $v$. Then, by combining the above estimates with the
Sobolev embedding $W^{2m-1,\frac{2N}{N-2}}\hookrightarrow L^{\frac{2N}{N-4m}}%
$, we obtain that
\begin{align*}
&  \left(  \int_{\mathbb{R}^{N}}|\Delta^{m}u|^{2}dx-C_{1}(N,m)\int
_{\mathbb{R}^{N}}\dfrac{|u|^{2}}{|x|^{4m}}dx\right)  ^{\theta}\left(
\int_{\mathbb{R}^{N}}\dfrac{|u|^{2}}{|x|^{4m}}dx\right)  ^{1-\theta}\\
&  \geq\left(  \int_{\mathbb{R}^{N}}|\Delta v|^{2}dx-\dfrac{N^{2}}{4}%
\int_{\mathbb{R}^{N}}\dfrac{|\nabla v|^{2}}{|x|^{2}}dx\right)  ^{\theta
}\left(  \int_{\mathbb{R}^{N}}\dfrac{|\nabla v|^{2}}{|x|^{2}}dx\right)
^{1-\theta}\\
&  \gtrsim\left\Vert \nabla v\right\Vert _{L^{\frac{2N}{N-2}}}^{2}\\
&  \gtrsim\left\Vert u\right\Vert _{L^{\frac{2N}{N-4m}}}^{2}.
\end{align*}

To prove (\ref{2.2.1}), we would like to compute
\[
\int_{\mathbb{R}^{N}}|\Delta v|^{2}dx-\dfrac{N^{2}}{4}\int_{\mathbb{R}^{N}%
}\dfrac{|\nabla v|^{2}}{|x|^{2}}dx
\]
in terms of $w$, where $w(r)=\frac{v^{\prime}(r)}{r}$. Since $v$ is radial, we
have
\begin{align*}
\int_{\mathbb{R}^{N}}|\Delta v|^{2}dx  &  =|\mathbb{S}^{N-1}|\int_{0}^{\infty
}\left\vert v^{\prime\prime}+\dfrac{N-1}{r}v^{\prime}\right\vert ^{2}%
r^{N-1}dr\\
&  =|\mathbb{S}^{N-1}|\left[  \int_{0}^{\infty}\left\vert v^{\prime\prime
}\right\vert ^{2}r^{N-1}dr+(N-1)\int_{0}^{\infty}\left\vert v^{\prime
}\right\vert ^{2}r^{N-3}dr\right] \\
&  =|\mathbb{S}^{N-1}|\left[  \int_{0}^{\infty}\left\vert w^{\prime
}\right\vert ^{2}r^{N+1}dr+(1-N)\int_{0}^{\infty}\left\vert w\right\vert
^{2}r^{N-1}dr+(N-1)\int_{0}^{\infty}\left\vert w\right\vert ^{2}%
r^{N-1}dr\right] \\
&  =|\mathbb{S}^{N-1}|\int_{0}^{\infty}\left\vert w^{\prime}\right\vert
^{2}r^{N+1}dr
\end{align*}
and
\[
\int_{\mathbb{R}^{N}}\dfrac{|\nabla v|^{2}}{|x|^{2}}dx=|\mathbb{S}^{N-1}%
|\int_{0}^{\infty}\left\vert v^{\prime}\right\vert ^{2}r^{N-3}dr=|\mathbb{S}%
^{N-1}|\int_{0}^{\infty}\left\vert w\right\vert ^{2}r^{N-1}dr.
\]
This implies that
\[
\int_{\mathbb{R}^{N}}|\Delta v|^{2}dx-\dfrac{N^{2}}{4}\int_{\mathbb{R}^{N}%
}\dfrac{|\nabla v|^{2}}{|x|^{2}}dx=|\mathbb{S}^{N-1}|\left[  \int_{0}^{\infty
}\left\vert w^{\prime}\right\vert ^{2}r^{N+1}dr-\frac{N^{2}}{4}\int
_{0}^{\infty}\left\vert w\right\vert ^{2}r^{N-1}dr\right]  .
\]
Let $f(r)=w(r)r^{\alpha}$, we have
\[
w^{\prime}(r)=\dfrac{f^{\prime}(r)}{r^{\alpha}}-\alpha\dfrac{f\left(
r\right)  }{r^{\alpha+1}}.
\]
Hence,
\begin{align*}
&  \int_{0}^{\infty}\left\vert w^{\prime}\right\vert ^{2}r^{N+1}dr\\
&  =\int_{0}^{\infty}\left\vert f^{\prime}\right\vert ^{2}r^{N-2\alpha
+1}dr+\alpha^{2}\int_{0}^{\infty}\left\vert f\right\vert ^{2}r^{N-2\alpha
-1}dr-2\alpha\int_{0}^{\infty}ff^{\prime}r^{N-2\alpha}dr\\
&  =\int_{0}^{\infty}\left\vert f^{\prime}\right\vert ^{2}r^{N-2\alpha
+1}dr+\alpha^{2}\int_{0}^{\infty}\left\vert f\right\vert ^{2}r^{N-2\alpha
-1}dr+\alpha(N-2\alpha)\int_{0}^{\infty}\left\vert f\right\vert ^{2}%
r^{N-2\alpha-1}dr\\
&  =\int_{0}^{\infty}\left\vert f^{\prime}\right\vert ^{2}r^{N-2\alpha
+1}dr+\alpha(N-\alpha)\int_{0}^{\infty}\left\vert f\right\vert ^{2}%
r^{N-2\alpha-1}dr,
\end{align*}
and
\[
\int_{0}^{\infty}\left\vert w\right\vert ^{2}r^{N-1}dr=\int_{0}^{\infty
}\left\vert f\right\vert ^{2}r^{N-2\alpha-1}dr.
\]
Therefore,
\begin{align*}
&  \int_{\mathbb{R}^{N}}|\Delta v|^{2}dx-\dfrac{N^{2}}{4}\int_{\mathbb{R}^{N}%
}\dfrac{|\nabla v|^{2}}{|x|^{2}}dx\\
&  =|\mathbb{S}^{N-1}|\left[  \int_{0}^{\infty}\left\vert w^{\prime
}\right\vert ^{2}r^{N+1}dr-\dfrac{N^{2}}{4}\int_{0}^{\infty}\left\vert
w\right\vert ^{2}r^{N-1}dr\right] \\
&  =|\mathbb{S}^{N-1}|\left[  \int_{0}^{\infty}\left\vert f^{\prime
}\right\vert ^{2}r^{N-2\alpha+1}dr+\left(  \alpha(N-\alpha)-\dfrac{N^{2}}%
{4}\right)  \int_{0}^{\infty}\left\vert f\right\vert ^{2}r^{N-2\alpha
-1}dr\right] \\
&  =|\mathbb{S}^{N-1}|\left[  \int_{0}^{\infty}\left\vert f^{\prime
}\right\vert ^{2}r^{N-2\alpha+1}dr-\left(  \alpha-\dfrac{N}{2}\right)
^{2}\int_{0}^{\infty}\left\vert f\right\vert ^{2}r^{N-2\alpha-1}dr\right]  .
\end{align*}
Thus, by choosing $\alpha=N/2$, we obtain
\[
\int_{\mathbb{R}^{N}}|\Delta v|^{2}dx-\dfrac{N^{2}}{4}\int_{\mathbb{R}^{N}%
}\dfrac{|\nabla v|^{2}}{|x|^{2}}dx=|\mathbb{S}^{N-1}|\int_{0}^{\infty
}r\left\vert f^{\prime}\right\vert ^{2}dr,
\]
where $v^{\prime}=\frac{f}{r^{N/2-1}}.$ As a result,
\[
\text{LHS}_{\eqref{2.2.1}}=|\mathbb{S}^{N-1}|\left(  \int_{0}^{\infty
}r\left\vert f^{\prime}\right\vert ^{2}dr\right)  ^{\theta}\left(  \int
_{0}^{\infty}\dfrac{\left\vert f\right\vert ^{2}}{r}dr\right)  ^{1-\theta}.
\]
Then, it suffices to prove that
\[
\left(  \int_{0}^{\infty}r\left\vert f^{\prime}\right\vert ^{2}dr\right)
^{\theta}\left(  \int_{0}^{\infty}\dfrac{|f(r)|^{2}}{r}dr\right)  ^{1-\theta
}\geq C_{rad,2^{\ast}}|\mathbb{S}^{d-1}|^{2/2^{\ast}-1}\left(  \int
_{0}^{\infty}\dfrac{|f(r)|^{2^{\ast}}}{r}dr\right)  ^{2/2^{\ast}},
\]
which is exactly what we have already received from \cite{DN23}. Here
$2^{\ast}=\frac{2N}{N-2}$. Also from \cite{DN23}, we also need the condition
\[
\theta=1/N
\]
so that the above estimate holds.

If $(1-\theta)A\geq B$, that is $A-B\geq\theta A$, then it is enough to show
that
\[
\left(  \int_{\mathbb{R}^{N}}|\Delta^{m}u|^{2}dx\right)  ^{\theta}\left(
\int_{\mathbb{R}^{N}}\dfrac{|u|^{2}}{|x|^{4m}}dx\right)  ^{1-\theta}\geq
C\left\Vert u\right\Vert _{L^{\frac{2N}{N-4m}}}^{2}.
\]
By Theorem \ref{T3.2}, we have that with $u\left(  \cdot\right)  =U\left(
\left\vert \cdot\right\vert \right)  :$
\[
\left(  \int_{\mathbb{R}^{N}}|\Delta^{m}u|^{2}dx\right)  ^{\theta}\left(
\int_{\mathbb{R}^{N}}\dfrac{|u|^{2}}{|x|^{4m}}dx\right)  ^{1-\theta}%
\gtrsim\left(  \int_{0}^{\infty}\left\vert U^{\left(  2m\right)  }\right\vert
^{2}r^{N-1}dr\right)  ^{\theta}\left(  \int_{0}^{\infty}|U|^{2}r^{N-1-4m}%
dr\right)  ^{1-\theta}.
\]
Therefore, it is enough to show that
\[
\left(  \int_{0}^{\infty}\left\vert U^{\left(  2m\right)  }\right\vert
^{2}r^{N-1}dr\right)  ^{\theta}\left(  \int_{0}^{\infty}|U|^{2}r^{N-1-4m}%
dr\right)  ^{1-\theta}\gtrsim\left(  \int_{0}^{\infty}|U|^{\frac{2N}{N-4m}%
}r^{N-1}dr\right)  ^{\frac{N-4m}{N}}.
\]
However, this is just a consequence of the higher order
Caffarelli-Kohn-Nirenberg inequality (Proposition \ref{HCKN}) in $1$
dimension. Indeed, in our case, $n=1$, $k=2m$, $j=0$, $p=q=2,$ $s=\frac
{2N}{N-4m}$, $a=\theta$, $\alpha=\frac{N-1}{2}$, $\beta=\frac{N-1-4m}{2}$,
$\gamma=\frac{\left(  N-1\right)  \left(  N-4m\right)  }{2N}$.

\textbf{Case 2:} $\theta=\frac{2m}{N}$.

By the Hardy-Rellich inequalities (Theorem \ref{Thm 1.3}), we have
\begin{align*}
&  \left(  \int_{\mathbb{R}^{N}}|\Delta^{m}u|^{2}dx-C_{1}(N,m)\int
_{\mathbb{R}^{N}}\dfrac{|u|^{2}}{|x|^{4m}}dx\right)  ^{\theta}\left(
\int_{\mathbb{R}^{N}}\dfrac{|u|^{2}}{|x|^{4m}}dx\right)  ^{1-\theta}\\
&  \gtrsim\left(  \int_{\mathbb{R}^{N}}\frac{|\nabla u|^{2}}{|x|^{4m-2}%
}dx-\frac{\left(  N-4m\right)  ^{2}}{4}\int_{\mathbb{R}^{N}}\dfrac{|u|^{2}%
}{|x|^{4m}}dx\right)  ^{\theta}\left(  \int_{\mathbb{R}^{N}}\dfrac{|u|^{2}%
}{|x|^{4m}}dx\right)  ^{1-\theta}.
\end{align*}
Therefore, it suffices to prove that
\begin{equation}
\left(  \int_{\mathbb{R}^{N}}\dfrac{|\nabla u|^{2}}{|x|^{4m-2}}dx-\dfrac
{(N-4m)^{2}}{4}\int_{\mathbb{R}^{N}}\dfrac{|u|^{2}}{|x|^{4m}}dx\right)
^{\theta}\left(  \int_{\mathbb{R}^{N}}\dfrac{|u|^{2}}{|x|^{4m}}dx\right)
^{1-\theta}\geq C\left\Vert u\right\Vert _{L^{\frac{2N}{N-4m}}}^{2}.
\label{eq 2.28}%
\end{equation}
Since $u$ is radial,
\begin{align*}
&  \int_{\mathbb{R}^{N}}\dfrac{|\nabla u|^{2}}{|x|^{4m-2}}dx-\dfrac
{(N-4m)^{2}}{4}\int_{\mathbb{R}^{N}}\dfrac{|u|^{2}}{|x|^{4m}}dx\\
&  =|\mathbb{S}^{N-1}|\left[  \int_{0}^{\infty}\left\vert u^{\prime
}\right\vert ^{2}r^{N-4m+1}dr-\dfrac{(N-4m)^{2}}{4}\int_{0}^{\infty}%
|u|^{2}r^{N-4m-1}dr\right]  .
\end{align*}
By letting $f(r)=u(r).r^{\alpha}$, we get
\begin{align*}
&  \int_{0}^{\infty}\left\vert u^{\prime}\right\vert ^{2}r^{N-4m+1}dr\\
&  =\int_{0}^{\infty}\left\vert f^{\prime}\right\vert ^{2}r^{N-4m-2\alpha
+1}dr+\alpha^{2}\int_{0}^{\infty}\left\vert f\right\vert ^{2}r^{N-4m-2\alpha
-1}dr-2\alpha\int_{0}^{\infty}ff^{\prime}r^{N-4m-2\alpha}dr\\
&  =\int_{0}^{\infty}\left\vert f^{\prime}\right\vert ^{2}r^{N-4m-2\alpha
+1}dr+\alpha^{2}\int_{0}^{\infty}\left\vert f\right\vert ^{2}r^{N-4m-2\alpha
-1}dr+\alpha(N-4m-2\alpha)\int_{0}^{\infty}\left\vert f\right\vert
^{2}r^{N-4m-2\alpha-1}dr\\
&  =\int_{0}^{\infty}\left\vert f^{\prime}\right\vert ^{2}r^{N-4m-2\alpha
+1}dr+\alpha(N-4m-\alpha)\int_{0}^{\infty}\left\vert f\right\vert
^{2}r^{N-4m-2\alpha-1}dr,
\end{align*}
and
\[
\int_{0}^{\infty}\left\vert u\right\vert ^{2}r^{N-4m-1}dr=\int_{0}^{\infty
}\left\vert f\right\vert ^{2}r^{N-4m-2\alpha-1}dr.
\]
It allows us
\begin{align*}
&  \int_{\mathbb{R}^{N}}\dfrac{|\nabla u|^{2}}{|x|^{4m-2}}dx-\dfrac
{(N-4m)^{2}}{4}\int_{\mathbb{R}^{N}}\dfrac{|u|^{2}}{|x|^{4m}}dx\\
&  =|\mathbb{S}^{N-1}|\left[  \int_{0}^{\infty}\left\vert f^{\prime
}\right\vert ^{2}r^{N-4m-2\alpha+1}dr+\left(  \alpha(N-4m-\alpha
)-\dfrac{(N-4m)^{2}}{4}\right)  \int_{0}^{\infty}\left\vert f\right\vert
^{2}r^{N-4m-2\alpha-1}dr\right] \\
&  =|\mathbb{S}^{N-1}|\left[  \int_{0}^{\infty}\left\vert f^{\prime
}\right\vert ^{2}r^{N-4m-2\alpha+1}dr-\left(  \alpha-\dfrac{N-4m}{2}\right)
^{2}\int_{0}^{\infty}\left\vert f\right\vert ^{2}r^{N-4m-2\alpha-1}dr\right]
.
\end{align*}
Thus, choosing $\alpha=\dfrac{N-4m}{2}$, we obtain
\[
\int_{\mathbb{R}^{N}}\dfrac{|\nabla u|^{2}}{|x|^{4m-2}}dx-\dfrac{(N-4m)^{2}%
}{4}\int_{\mathbb{R}^{N}}\dfrac{|u|^{2}}{|x|^{4m}}dx=|\mathbb{S}^{N-1}%
|\int_{0}^{\infty}r\left\vert f^{\prime}\right\vert ^{2}dr,
\]
where $u=\frac{f}{r^{(N-4m)/2}}.$ As a result,
\[
\text{LHS}_{\eqref{eq 2.28}}=|\mathbb{S}^{N-1}|\left(  \int_{0}^{\infty
}r\left\vert f^{\prime}\right\vert ^{2}dr\right)  ^{\theta}\left(  \int
_{0}^{\infty}\dfrac{\left\vert f\right\vert ^{2}}{r}dr\right)  ^{1-\theta}.
\]
Then, it remains to prove that
\[
\left(  \int_{0}^{\infty}r\left\vert f^{\prime}\right\vert ^{2}dr\right)
^{\theta}\left(  \int_{0}^{\infty}\dfrac{|f(r)|^{2}}{r}dr\right)  ^{1-\theta
}\geq C_{rad}|\mathbb{S}^{N-1}|^{\frac{N-4m}{N}-1}\left(  \int_{0}^{\infty
}\dfrac{|f(r)|^{\frac{2N}{N-4m}}}{r}dr\right)  ^{\frac{N-4m}{N}},
\]
which is equivalent to
\[
\left(  \int_{\mathbb{R}}\left\vert \psi^{\prime}\right\vert ^{2}ds\right)
^{\theta}\left(  \int_{\mathbb{R}}|\psi(s)|^{2}\right)  ^{1-\theta}\geq
C_{rad}|\mathbb{S}^{N-1}|^{\frac{N-4m}{N}-1}\left(  \int_{\mathbb{R}}%
|\psi(s)|^{\frac{2N}{N-4m}}\right)  ^{\frac{N-4m}{N}},
\]
where $f(r)=\psi(\log r)=\psi(s)$. This inequality is a direct consequence of
the main theorem in \cite{N41} with $\alpha=p=2$ and $\beta=\dfrac{8m}{N-4m}$.
In this case, we need the condition $\theta=\dfrac{2m}{N}$.

\textbf{Case 3:} $\frac{1}{N}<\theta<\frac{2m}{N}$. This case is just a
consequence of Case 1 and Case 2 via a simple interpolation argument.

By the same approach, we also obtain (\ref{odd-1i}).
\end{proof}

\section{Proof of Theorem \ref{T1}}

Theorem \ref{T1} can be deduced from the following Theorem \ref{T2.1}, Theorem
\ref{Tg1} and Theorem \ref{Tg2}.

To illustriate our main ideas in proving Theorem \ref{T1}, we will start with
the proof of Theorem \ref{T1} in the special case $N=1/\theta=5$. More
precisely, we will prove that

\begin{theorem}
\label{T2.1} With $N=1/\theta=5$, we have
\begin{equation}
\left(  \int_{\mathbb{R}^{N}}|\Delta u|^{2}dx-\dfrac{N^{2}(N-4)^{2}}{16}%
\sup_{y\in\mathbb{R}^{N}}\int_{\mathbb{R}^{N}}\dfrac{|u(x)|^{2}}{|x-y|^{4}%
}dx\right)  ^{\theta}\left(  \sup_{y\in\mathbb{R}^{N}}\int_{\mathbb{R}^{N}%
}\dfrac{|u(x)|^{2}}{|x-y|^{4}}dx\right)  ^{1-\theta}\geq C\left\Vert
u\right\Vert _{L^{\frac{2N}{N-4}}}^{2} \label{2-0oi}%
\end{equation}
holds with a constant $C=C(N,\theta)>0$ independent of $u\in D^{2,2}%
(\mathbb{R}^{N})$.
\end{theorem}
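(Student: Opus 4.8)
The plan is to adapt the two-case argument of Dietze and Nam \cite{DN23}, replacing each first order ingredient by the higher order one supplied by the excerpt. Put $N=5$, $\theta=\tfrac1N=\tfrac15$, $2^\ast=\tfrac{2N}{N-4}=10$, $C_1=C_1(N,1)=\tfrac{N^2(N-4)^2}{16}$, and (arguing for $u\in C_0^\infty(\mathbb{R}^N)$, the general case following by density) set
\[
D=\int_{\mathbb{R}^N}|\Delta u|^2\,dx,\qquad
H=\sup_{y\in\mathbb{R}^N}\int_{\mathbb{R}^N}\frac{|u(x)|^2}{|x-y|^4}\,dx,\qquad
Q=D-C_1H .
\]
Since the Hardy--Rellich inequality of Theorem \ref{Thm 1.3} is translation invariant, $C_1\int_{\mathbb{R}^N}|u(x)|^2|x-y|^{-4}\,dx\le D$ for every $y$, so $0\le C_1H\le D$ and $Q\ge0$. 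A direct computation shows $g(B):=(D-B)^\theta B^{1-\theta}$ is non-increasing on $\big((1-\theta)D,\,D\big]$, and this elementary monotonicity is what welds the two cases together. I split according to whether $Q\ge\theta D$ (equivalently $C_1H\le(1-\theta)D$) or $Q<\theta D$ (equivalently $C_1H>(1-\theta)D$).

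\textbf{Case 1: $Q\ge\theta D$.} Here I discard $Q$ and estimate $D^\theta H^{1-\theta}$ from below directly. Testing the definition of $H$ on balls gives $H\ge\|u\|_{M^{2,N-4}}^2$, and the Morrey--Besov embeddings of Proposition \ref{lem 2.2} yield
\[
\|u\|_{\dot B^{-(N-4)/2}}\ \lesssim\ \|u\|_{M^{1,(N-4)/2}}\ \lesssim\ \|u\|_{M^{2,N-4}}\ \lesssim\ H^{1/2}.
\]
Applying the higher order Gagliardo--Nirenberg inequality with Besov norms (Theorem \ref{Thm 1.2}) with top order derivative $2$, lower order $0$, Besov exponent $s=\tfrac{N-4}{2}$ and $p=2$ — so that the resulting Lebesgue exponent is $r=2\cdot\tfrac{2+s}{s}=2^\ast$ — and using the identity $\int_{\mathbb{R}^N}|\Delta u|^2\,dx=\int_{\mathbb{R}^N}|\nabla^2u|^2\,dx$, we obtain
\[
\|u\|_{L^{2^\ast}}\ \lesssim\ \|u\|_{\dot B^{-(N-4)/2}}^{\,4/N}\,\|\nabla^2u\|_{L^2}^{(N-4)/N}\ \lesssim\ H^{2/N}\,D^{(N-4)/(2N)} .
\]
Squaring and using $N=5$, which forces $\tfrac{N-4}{N}=\tfrac1N=\theta$ (this coincidence is precisely why the non-radial statement only survives in dimension $N=4m+1$), gives $\|u\|_{L^{2^\ast}}^2\lesssim D^\theta H^{1-\theta}$; since $Q\ge\theta D$, we conclude $Q^\theta H^{1-\theta}\ge\theta^\theta\,D^\theta H^{1-\theta}\gtrsim\|u\|_{L^{2^\ast}}^2$.

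\textbf{Case 2: $Q<\theta D$.} Now $C_1H>(1-\theta)D$, so $H$ is comparable to $D$ and a symmetrization is available. Let $f=-\Delta u$ and let $w$ be the radially symmetric decreasing solution of $-\Delta w=f^\ast$ (the Riesz potential of $f^\ast$). Then $\int_{\mathbb{R}^N}|\Delta w|^2\,dx=\|f^\ast\|_{L^2}^2=\|f\|_{L^2}^2=D$; the Talenti comparison principle gives $u^\ast\le w$ pointwise, hence $\|u\|_{L^{2^\ast}}=\|u^\ast\|_{L^{2^\ast}}\le\|w\|_{L^{2^\ast}}$; and, writing $\widetilde H:=\int_{\mathbb{R}^N}\tfrac{w^2}{|x|^4}\,dx$ (which equals $\sup_y\int w^2|x-y|^{-4}\,dx$ since $w$ is radial decreasing, and is finite by Hardy--Rellich), the Hardy--Littlewood inequality applied to $|u|^2$ and $|{\cdot}-y|^{-4}$ for each $y$ yields $H\le\int_{\mathbb{R}^N}\tfrac{(u^\ast)^2}{|x|^4}\,dx\le\widetilde H$. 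Set $\widetilde Q=D-C_1\widetilde H\ge0$ (Hardy--Rellich for $w$). Since $C_1H\le C_1\widetilde H$ and both lie in $\big((1-\theta)D,\,D\big]$, the monotonicity of $g$ gives $\widetilde Q^\theta\widetilde H^{1-\theta}\le Q^\theta H^{1-\theta}$. Finally, $w\in D_{\mathrm{rad}}^{2,2}(\mathbb{R}^N)$, so the radial inequality \eqref{2m-1i} of Theorem \ref{T3} with $m=1$ (its hypotheses $\tfrac1N\le\theta\le\tfrac{2m}{N}$ hold since $\theta=\tfrac1N$) gives $\widetilde Q^\theta\widetilde H^{1-\theta}\ge C\|w\|_{L^{2^\ast}}^2$. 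Chaining,
\[
Q^\theta H^{1-\theta}\ \ge\ \widetilde Q^\theta\widetilde H^{1-\theta}\ \ge\ C\,\|w\|_{L^{2^\ast}}^2\ \ge\ C\,\|u\|_{L^{2^\ast}}^2,
\]
which proves \eqref{2-0oi}.

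The principal obstacle is the symmetrization in Case 2: there is no P\'olya--Szeg\H{o} principle reducing the problem directly to radial functions, so the argument must be routed through the Talenti comparison principle for $-\Delta u=f$ (and, for the general statement \eqref{2m-0i}--\eqref{odd-0i}, its iterated higher order version), taking care that the energy $\int|\Delta u|^2$, the Hardy weight $\sup_y\int|u|^2|x-y|^{-4}$, and the critical norm $\|u\|_{L^{2^\ast}}$ all move in compatible directions and that every object decays enough in $\mathbb{R}^N$ for Talenti's theorem to apply. The second delicate point is purely a matter of exponent bookkeeping: the chain of estimates in Case 1 closes only when $\tfrac{N-4}{N}=\tfrac1N$, i.e. $N=5$, which is exactly why Theorem \ref{T1} is confined to $N=4m+1$.
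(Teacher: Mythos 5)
Your proof follows the paper's two-case strategy exactly: in Case 1 the Morrey bound on $H$, the chain of embeddings $M^{2,N-4}\hookrightarrow M^{1,(N-4)/2}\hookrightarrow\dot B^{-(N-4)/2}$ from Proposition~\ref{lem 2.2}, and the Besov Gagliardo--Nirenberg inequality of Theorem~\ref{Thm 1.2} close the estimate precisely at $\theta=(N-4)/N$; in Case 2, symmetrization via Talenti plus the radial inequality~\eqref{2m-1i} of Theorem~\ref{T3} with $m=1$ finish the argument, and the two exponent constraints $\theta\ge(N-4)/N$ and $\theta=1/N$ meet only at $N=5$. The one place you depart from the paper is the choice of comparison function in Case 2: the paper solves the Dirichlet problem $-\Delta v=f^{\ast}$ on $\Omega^{\ast}=(\operatorname{supp}u)^{\ast}$ with $v=0$ on $\partial\Omega^{\ast}$, whereas you take $w$ to be the Newtonian potential of $f^{\ast}$ on all of $\mathbb{R}^N$. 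Talenti's theorem as cited gives $u^{\ast}\le v$ for the Dirichlet solution $v$; to conclude $u^{\ast}\le w$ you still need the (easy, but unstated) observation that $w\ge v$ on $\Omega^{\ast}$, since $w-v$ is harmonic there with nonnegative boundary trace because $f^{\ast}\ge0$. Once that sentence is added, your variant is in fact slightly cleaner than the paper's: $w$ is a genuine radial element of $D^{2,2}(\mathbb{R}^N)$ with $\Delta w=-f^{\ast}$ in the distributional sense everywhere, so the appeal to the radial Theorem~\ref{T3} is unambiguous, whereas the zero-extension of the Dirichlet solution $v$ is not in $H^2(\mathbb{R}^N)$ (its gradient jumps across $\partial\Omega^{\ast}$, so its distributional Laplacian acquires a singular layer there), a point the paper glosses over.
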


\begin{proof}
[Proof of Theorem \ref{T2.1}]Let $u\in C_{0}^{\infty}\left(  \mathbb{R}%
^{N}\right)  $ and denote
\[
A=\int_{\mathbb{R}^{N}}|\Delta u|^{2}dx,\;\;B=\dfrac{N^{2}(N-4)^{2}}{16}%
\sup_{y\in\mathbb{R}^{N}}\int_{\mathbb{R}^{N}}\dfrac{|u(x)|^{2}}{|x-y|^{4}%
}dx.
\]
Obviously, by the Rellich inequality (Theorem \ref{Thm 1.3}), we have that
$A\geq B\geq0$.

\textbf{Case 1:} $(1-\theta)A\geq B$\textbf{.} In this case, we have
\[
B\geq\dfrac{N^{2}(N-4)^{2}}{16}\sup_{r>0,y\in\mathbb{R}^{N}}\int
_{B(y,r)}\dfrac{|u(x)|^{2}}{|x-y|^{4}}dx\geq\dfrac{N^{2}(N-4)^{2}}{16}%
\sup_{r>0,y\in\mathbb{R}^{N}}\dfrac{1}{r^{4}}\int_{B(y,r)}|u(x)|^{2}dx.
\]
Moreover, since $A-B\geq\theta A$, it follows that
\begin{align*}
(A-B)^{\theta}B^{1-\theta}  &  \gtrsim\left\Vert \Delta u\right\Vert _{L^{2}%
}^{2\theta}\left(  \sup_{r>0,y\in\mathbb{R}^{N}}\dfrac{1}{r^{4}}\int
_{B(y,r)}|u(x)|^{2}dx\right)  ^{1-\theta}\\
&  \gtrsim\left\Vert \Delta u\right\Vert _{L^{2}}^{2\theta}\left\Vert
u\right\Vert _{\mathbf{M}^{2,N-4}}^{2(1-\theta)}.
\end{align*}
Then, it suffices to prove that
\begin{equation}
\left\Vert \Delta u\right\Vert _{L^{2}}^{2\theta}\left\Vert u\right\Vert
_{\mathbf{M}^{2,N-4}}^{2(1-\theta)}\gtrsim\left\Vert u\right\Vert
_{L^{\frac{2N}{N-4}}}^{2}. \label{eq 2.2}%
\end{equation}
Applying Proposition \ref{lem 2.2} and Theorem \ref{Thm 1.2} with
$k=0,r=2^{\ast\ast}=\frac{2N}{N-4}$, $m=2$ and $p=2$, we have
\[
\frac{2N}{N-4}=\frac{2(2+s)}{s},
\]
which implies that $s=\frac{N-4}{2}$, and
\begin{align*}
\left\Vert u\right\Vert _{L^{2^{\ast\ast}}}  &  \lesssim\left\Vert
u\right\Vert _{\dot{B}^{-s}}^{4/N}\left\Vert \Delta u\right\Vert _{L^{2}%
}^{(N-4)/N}\\
&  \lesssim\left\Vert u\right\Vert _{M^{1,s}}^{4/N}\left\Vert \Delta
u\right\Vert _{L^{2}}^{(N-4)/N}\\
&  \lesssim\left\Vert u\right\Vert _{M^{2,2s}}^{4/N}\left\Vert \Delta
u\right\Vert _{L^{2}}^{(N-4)/N}.
\end{align*}
The last one is exactly \eqref{eq 2.2} with $\theta=\dfrac{N-4}{N}$. Combining
this with the Sobolev inequality $\left\Vert u\right\Vert _{L^{2^{\ast\ast}}%
}\lesssim\left\Vert \Delta u\right\Vert _{L^{2}}$, we obtain \eqref{eq 2.2}
with $\theta\geq\dfrac{N-4}{N}.$

\textbf{Case 2:} $(1-\theta)A<B$. We note that since
\[
\dfrac{d}{dB}((A-B)^{\theta}B^{1-\theta})=((1-\theta)A-B)(A-B)^{1-\theta
}B^{-\theta}%
\]
the function $(A-B)^{\theta}B^{1-\theta}$ is non-increasing with respect to
$B$ if $0\leq(1-\theta)A<B\leq A$.

Let $\Omega=supp\left(  u\right)  $ and $f=-\Delta u$. Let $\Omega^{\ast}$ the
ball centered at the origin such that $\left\vert \Omega^{\ast}\right\vert
=\left\vert \Omega\right\vert $ and $f^{\ast}$ the standard Schwarz
rearrangement of $f$. We consider the following Dirichlet problem%
\[
\left\{
\begin{array}
[c]{cc}%
-\Delta v=f^{\ast} & \text{in }\Omega^{\ast}\\
v=0 & \text{on }\partial\Omega^{\ast}%
\end{array}
\right.  .
\]
Then we have the following comparison principle (see, for instance,
\cite{Tal76}): $u^{\ast}\leq v$. Therefore, by the Hardy-Littlewood
inequality, we have
\[
\sup_{y\in\mathbb{R}^{N}}\int_{\mathbb{R}^{N}}\dfrac{|u(x)|^{2}}{|x-y|^{4}%
}dx\leq\int_{\mathbb{R}^{N}}\dfrac{|u^{\ast}|^{2}}{|x|^{4}}dx\leq
\int_{\mathbb{R}^{N}}\dfrac{|v|^{2}}{|x|^{4}}dx.
\]
We also note that
\[
A=\int_{\mathbb{R}^{N}}|\Delta u|^{2}dx=\int_{\mathbb{R}^{N}}|f|^{2}%
dx=\int_{\mathbb{R}^{N}}|f^{\ast}|^{2}dx=\int_{\mathbb{R}^{N}}|\Delta
v|^{2}dx.
\]
Therefore by the Rellich inequality, we have that
\[
(1-\theta)A<B\leq\dfrac{N^{2}(N-4)^{2}}{16}\int_{\mathbb{R}^{N}}\dfrac
{|v|^{2}}{|x|^{4}}dx\leq A.
\]
Since the function $x\mapsto(A-x)^{\theta}x^{1-\theta}$ is non-increasing on
$\left[  (1-\theta)A,A\right]  $, we get%
\begin{align*}
&  \left(  \int_{\mathbb{R}^{N}}|\Delta u|^{2}dx-\dfrac{N^{2}(N-4)^{2}}%
{16}\sup_{y\in\mathbb{R}^{N}}\int_{\mathbb{R}^{N}}\dfrac{|u(x)|^{2}}%
{|x-y|^{4}}dx\right)  ^{\theta}\left(  \sup_{y\in\mathbb{R}^{N}}%
\int_{\mathbb{R}^{N}}\dfrac{|u(x)|^{2}}{|x-y|^{4}}dx\right)  ^{1-\theta}\\
&  \geq\left(  \int_{\mathbb{R}^{N}}|\Delta v|^{2}dx-\dfrac{N^{2}(N-4)^{2}%
}{16}\int_{\mathbb{R}^{N}}\dfrac{|v|^{2}}{|x|^{4}}dx\right)  ^{\theta}\left(
\int_{\mathbb{R}^{N}}\dfrac{|v|^{2}}{|x|^{4}}dx\right)  ^{1-\theta}.
\end{align*}
Also, since $\left\Vert u\right\Vert _{L^{\frac{2N}{N-4}}}^{2}=\left\Vert
u^{\ast}\right\Vert _{L^{\frac{2N}{N-4}}}^{2}\leq\left\Vert v\right\Vert
_{L^{\frac{2N}{N-4}}}^{2}$, it is now enough to show that
\begin{equation}
\left(  \int_{\mathbb{R}^{N}}|\Delta v|^{2}dx-\dfrac{N^{2}(N-4)^{2}}{16}%
\int_{\mathbb{R}^{N}}\dfrac{|v|^{2}}{|x|^{4}}dx\right)  ^{\theta}\left(
\int_{\mathbb{R}^{N}}\dfrac{|v|^{2}}{|x|^{4}}dx\right)  ^{1-\theta}%
\gtrsim\left\Vert v\right\Vert _{L^{\frac{2N}{N-4}}}^{2} \label{2.1}%
\end{equation}
for all radial functions $v$. Obvisouly, this is just a consequence of Theorem
\ref{T3}. Also, as noted in the proof of Theorem \ref{T3}, we also need the
condition
\[
\theta=1/N
\]
so that the above estimate holds.

Finally, we note that by combining the two conditions $\theta\geq\dfrac
{N-4}{N}$ and $\theta=\frac{1}{N}$, we get $N=5$ and $\theta=\frac{1}{5}$.
\end{proof}

Next, we state here a comparison principle for higher order derivatives, which
is a generalization of the comparison principle in \cite{Tal76}. Indeed,

\begin{proposition}
\label{prop 2.3} Let us consider two problems
\[%
\begin{cases}
(-\Delta)^{k} u=f & \text{ in } \Omega\\
(-\Delta)^{i} u=0 & \text{ on } \partial\Omega
\end{cases}
\]
for all $i=\overline{0, k-1}$, and
\[%
\begin{cases}
(-\Delta)^{k} v=f^{*} & \text{ in } \Omega^{*}\\
(-\Delta)^{i} v=0 & \text{ on } \partial\Omega^{*}%
\end{cases}
\]
for all $i=\overline{0, k-1}$. Assume that $u$ and $v$ are weak solutions to
the two above problems, then $u^{*} \leq v$ in $\Omega^{*}$.
\end{proposition}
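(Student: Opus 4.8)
The plan is to unwind each polyharmonic Navier problem into a cascade of second-order Dirichlet problems and then iterate Talenti's comparison principle \cite{Tal76}. Set $z_{0}:=f$ and, for $j=1,\dots,k$, let $z_{j}\in H_{0}^{1}(\Omega)$ be the weak solution of $-\Delta z_{j}=z_{j-1}$ in $\Omega$, $z_{j}=0$ on $\partial\Omega$; each $z_{j-1}$ lies in $L^{2}(\Omega)$, so this is well posed, and the identity $(-\Delta)^{i}z_{k}=z_{k-i}$ shows that $z_{k}$ satisfies the Navier conditions $(-\Delta)^{i}z_{k}=0$ on $\partial\Omega$ for $i=0,\dots,k-1$ together with $(-\Delta)^{k}z_{k}=f$; by uniqueness $u=z_{k}$. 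Build the parallel cascade $y_{0}:=f^{*}$, $-\Delta y_{j}=y_{j-1}$ in $\Omega^{*}$, $y_{j}=0$ on $\partial\Omega^{*}$, so that $v=y_{k}$; since $f^{*}\ge 0$, the maximum principle gives $y_{j}\ge 0$ for every $j$, and (because $\Omega^{*}$ is a ball and $f^{*}$ is radial non-increasing) each $y_{j}$, and in particular $v$, is radially symmetric and non-increasing.

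The core of the argument is the claim that $z_{j}^{*}\le y_{j}$ in $\Omega^{*}$ for all $j=1,\dots,k$, where $z_{j}^{*}$ denotes the Schwarz symmetrization of $z_{j}$. For $j=1$ this is exactly Talenti's comparison principle applied to $-\Delta z_{1}=f$ versus $-\Delta y_{1}=f^{*}$. Assume the inequality at level $j-1$. Talenti's principle applied to $-\Delta z_{j}=z_{j-1}$ in $\Omega$ yields $z_{j}^{*}\le\widehat{y}_{j}$, where $\widehat{y}_{j}\in H_{0}^{1}(\Omega^{*})$ solves $-\Delta\widehat{y}_{j}=z_{j-1}^{*}$ in $\Omega^{*}$ with zero boundary values. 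Then $-\Delta(y_{j}-\widehat{y}_{j})=y_{j-1}-z_{j-1}^{*}\ge 0$ in $\Omega^{*}$ with $y_{j}-\widehat{y}_{j}=0$ on $\partial\Omega^{*}$, so the weak maximum principle gives $\widehat{y}_{j}\le y_{j}$, hence $z_{j}^{*}\le y_{j}$. Taking $j=k$ yields $u^{*}=z_{k}^{*}\le y_{k}=v$, which is the assertion.

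The step needing the most care is the application of Talenti's principle at the intermediate levels $j\ge 2$: the right-hand side $z_{j-1}$ is in general sign-changing (already $z_{1}$ inherits the sign changes of $f$), so one must use the form of Talenti's theorem valid for arbitrary $L^{2}$ data, which compares $z_{j}^{*}$ with the solution on $\Omega^{*}$ whose source is $z_{j-1}^{*}=|z_{j-1}|^{*}$. It is worth emphasizing that the induction only needs to propagate the pointwise bound $z_{j-1}^{*}\le y_{j-1}$ — no sign condition on $z_{j-1}$ itself is required — and it is the monotonicity of the Poisson solution operator on the fixed ball $\Omega^{*}$, via the maximum principle, that carries the bound up the cascade. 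The well-posedness and regularity of the two cascades, together with $f^{*}\in L^{2}(\Omega^{*})$ and $\|f^{*}\|_{L^{2}(\Omega^{*})}=\|f\|_{L^{2}(\Omega)}$, are standard facts that legitimize all of the steps above.
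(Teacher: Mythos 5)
Your proof is correct and is essentially the same argument as the paper's: both unwind the Navier polyharmonic problems into a chain of second-order Dirichlet problems, establish the comparison at one end of the chain via Talenti's theorem, and propagate it with an auxiliary intermediate solution together with the maximum principle on the fixed ball $\Omega^{*}$. The only cosmetic difference is that the paper indexes by $u_{i}=(-\Delta)^{i}u$ and descends from $i=k-1$ to $i=0$, whereas you index by the "inverse" cascade $z_{j}$ ascending from $j=1$ to $j=k$; the two are related by $z_{j}=u_{k-j}$, and your auxiliary $\widehat{y}_{j}$ plays the exact role of the paper's $w$. Your explicit remark that at intermediate levels the source $z_{j-1}$ can change sign, so one must invoke Talenti's theorem in the form that symmetrizes $|z_{j-1}|$, is a useful clarification that the paper leaves implicit.
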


See, for instance, \cite{Tar12}. For the convenience of the reader, we will
present here a proof of this result.

\begin{proof}
[Proof of Proposition \ref{prop 2.3}]For convenience, we denote $(-\Delta
)^{i}u=u_{i}$ and $(-\Delta)^{i}v=v_{i}$ for all $i=\overline{0,k}$. Using the
Talenti's comparison principle \cite{Tal76}, we have
\[
u_{k-1}^{\ast}\leq v_{k-1}.
\]
The idea is to use this comparison principle consecutively to obtain
$u_{i}^{\ast}\leq v_{i}$ for all $i=\overline{0,k}$. To do this, assume that
$u_{j}^{\ast}\leq v_{j}$ for some $j=\overline{1,k-1}$. We claim that
\[
u_{j-1}^{\ast}\leq v_{j-1}.
\]
More clearly, by definition, $u_{j-1}$ and $v_{j-1}$ are solutions of
\[%
\begin{cases}
-\Delta u_{j-1}=u_{j} & \text{ in }\Omega\\
u_{j-1}=0 & \text{ on }\partial\Omega
\end{cases}
,
\]
and
\[%
\begin{cases}
-\Delta v_{j-1}=v_{j} & \text{ in }\Omega^{\ast}\\
v_{j-1}=0 & \text{ on }\partial\Omega^{\ast}%
\end{cases}
,
\]
respectively. Let us consider one more problem
\[%
\begin{cases}
-\Delta w=u_{j}^{\ast} & \text{ in }\Omega^{\ast}\\
w=0 & \text{ on }\partial\Omega^{\ast}%
\end{cases}
.
\]
By applying the Talenti's comparison principle for $u_{j-1}$ and $w$, we have
$u_{j-1}^{\ast}\leq w$. Moreover, $w-v_{j-1}$ is the solution of
\[%
\begin{cases}
-\Delta(w-v_{j-1})=u_{j}^{\ast}-v_{j} & \text{ in }\Omega^{\ast}\\
w-v_{j-1}=0 & \text{ on }\partial\Omega^{\ast}%
\end{cases}
.
\]
Since $u_{j}^{\ast}-v_{j}\leq0$, by the maximum principle, we get
$w-v_{j-1}\leq0$, which implies that $u_{j-1}^{\ast}\leq w\leq v_{j-1}$, as
desired. Therefore, by an induction argument, $u^{\ast}\leq v$ in
$\Omega^{\ast}$.
\end{proof}

We are now ready to provide a proof for Theorem \ref{T1}. More precisely, we have

\begin{theorem}
\label{Tg1} With $N=1/\theta=4m+1$ $(m\in\mathbb{Z}_{+})$ and $C_{1}(N,m)$
defined in Theorem \ref{Thm 1.3}, we have
\begin{equation}
\left(  \int_{\mathbb{R}^{N}}|\Delta^{m}u|^{2}dx-C_{1}(N,m)\sup_{y\in
\mathbb{R}^{N}}\int_{\mathbb{R}^{N}}\dfrac{|u(x)|^{2}}{|x-y|^{4m}}dx\right)
^{\theta}\left(  \sup_{y\in\mathbb{R}^{N}}\int_{\mathbb{R}^{N}}\dfrac
{|u(x)|^{2}}{|x-y|^{4m}}dx\right)  ^{1-\theta}\geq C\left\Vert u\right\Vert
_{L^{\frac{2N}{N-4m}}}^{2} \label{2m-0i1}%
\end{equation}
holds with a constant $C=C(N,\theta)>0$ independent of $u\in D^{2m,2}%
(\mathbb{R}^{N})$.
\end{theorem}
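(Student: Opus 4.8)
The plan is to follow the scheme of the proof of Theorem \ref{T2.1}. By density it is enough to establish \eqref{2m-0i1} for $u\in C_{0}^{\infty}(\mathbb{R}^{N})$. Set
\[
A=\int_{\mathbb{R}^{N}}|\Delta^{m}u|^{2}dx,\qquad B=C_{1}(N,m)\sup_{y\in\mathbb{R}^{N}}\int_{\mathbb{R}^{N}}\dfrac{|u(x)|^{2}}{|x-y|^{4m}}dx,
\]
so that $A\ge B\ge 0$ by the Hardy--Rellich inequality (Theorem \ref{Thm 1.3}). I would then split into the two regimes $(1-\theta)A\ge B$ and $(1-\theta)A<B$; note that this dichotomy depends on $u$ rather than on $\theta$, so a single value of $\theta$ must make both regimes work, which is ultimately what pins down the dimension.

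In the regime $(1-\theta)A\ge B$ one has $A-B\ge\theta A$ and, since $|x-y|^{-4m}$ is radially decreasing, $B\ge C_{1}(N,m)\sup_{r>0,\,y}r^{-4m}\int_{B(y,r)}|u|^{2}dx=C_{1}(N,m)\Vert u\Vert_{M^{2,N-4m}}^{2}$. Hence $(A-B)^{\theta}B^{1-\theta}\gtrsim\Vert\Delta^{m}u\Vert_{L^{2}}^{2\theta}\Vert u\Vert_{M^{2,N-4m}}^{2(1-\theta)}$, and it suffices to prove the interpolation $\Vert\Delta^{m}u\Vert_{L^{2}}^{2\theta}\Vert u\Vert_{M^{2,N-4m}}^{2(1-\theta)}\gtrsim\Vert u\Vert_{L^{2N/(N-4m)}}^{2}$. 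For this I would invoke the higher order Gagliardo--Nirenberg inequality with Besov norms (Theorem \ref{Thm 1.2}) with derivative order $2m$, $k=0$, $p=2$ and $r=\frac{2N}{N-4m}$: the compatibility relation forces $s=\frac{N-4m}{2}>0$ and yields $\Vert u\Vert_{L^{2N/(N-4m)}}\lesssim\Vert u\Vert_{\dot B^{-s}}^{4m/N}\Vert\nabla^{2m}u\Vert_{L^{2}}^{(N-4m)/N}$; combining with $\Vert\nabla^{2m}u\Vert_{L^{2}}\simeq\Vert\Delta^{m}u\Vert_{L^{2}}$ and the embeddings $M^{2,N-4m}\hookrightarrow M^{1,(N-4m)/2}\hookrightarrow\dot B^{-(N-4m)/2}$ of Proposition \ref{lem 2.2} gives the interpolation with $\theta=\frac{N-4m}{N}$. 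Interpolating this with the critical Sobolev inequality $\Vert u\Vert_{L^{2N/(N-4m)}}\lesssim\Vert\Delta^{m}u\Vert_{L^{2}}$ extends it to every $\theta\ge\frac{N-4m}{N}$. Since $N=4m+1$ gives $\frac{N-4m}{N}=\frac1N=\theta$, this regime is covered.

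In the regime $(1-\theta)A<B$ the P\'{o}lya--Szeg\"{o} principle is unavailable, and I would replace it by the higher order Talenti comparison principle of Proposition \ref{prop 2.3}. Put $f=(-\Delta)^{m}u$, let $\Omega=\operatorname{supp}(u)$, let $\Omega^{\ast}$ be the ball centered at the origin with $|\Omega^{\ast}|=|\Omega|$, let $f^{\ast}$ be the Schwarz rearrangement of $f$, and let $v$ solve $(-\Delta)^{m}v=f^{\ast}$ in $\Omega^{\ast}$ with $(-\Delta)^{i}v=0$ on $\partial\Omega^{\ast}$ for $i=0,\dots,m-1$; then $u^{\ast}\le v$. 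As $\Vert f^{\ast}\Vert_{L^{2}}=\Vert f\Vert_{L^{2}}$ we get $\int_{\mathbb{R}^{N}}|\Delta^{m}v|^{2}dx=A$, and the Hardy--Littlewood inequality together with $u^{\ast}\le v$ gives $\sup_{y}\int\frac{|u(x)|^{2}}{|x-y|^{4m}}dx\le\int\frac{|u^{\ast}|^{2}}{|x|^{4m}}dx\le\int\frac{|v|^{2}}{|x|^{4m}}dx$. By the Rellich inequality applied to $v$, $(1-\theta)A<B\le C_{1}(N,m)\int\frac{|v|^{2}}{|x|^{4m}}dx\le A$, so $B$ and $C_{1}(N,m)\int\frac{|v|^{2}}{|x|^{4m}}dx$ both lie in the interval $[(1-\theta)A,A]$ on which $x\mapsto(A-x)^{\theta}x^{1-\theta}$ is non-increasing; consequently
\[
(A-B)^{\theta}B^{1-\theta}\gtrsim\Bigl(\int_{\mathbb{R}^{N}}|\Delta^{m}v|^{2}dx-C_{1}(N,m)\int_{\mathbb{R}^{N}}\dfrac{|v|^{2}}{|x|^{4m}}dx\Bigr)^{\theta}\Bigl(\int_{\mathbb{R}^{N}}\dfrac{|v|^{2}}{|x|^{4m}}dx\Bigr)^{1-\theta}.
\]
Since $v$ is radial, the right hand side is $\gtrsim\Vert v\Vert_{L^{2N/(N-4m)}}^{2}\ge\Vert u^{\ast}\Vert_{L^{2N/(N-4m)}}^{2}=\Vert u\Vert_{L^{2N/(N-4m)}}^{2}$ by the radial inequality \eqref{2m-1i} of Theorem \ref{T3}, which is available precisely for $\frac1N\le\theta\le\frac{2m}{N}$; for $N=4m+1$ and $\theta=\frac1N$ this holds since $m\ge1$.

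Putting the two regimes together, the inequality holds for every $\theta$ meeting both $\theta\ge\frac{N-4m}{N}$ and $\frac1N\le\theta\le\frac{2m}{N}$, and for $N=4m+1$ the value $\theta=\frac1N$ does, which proves the theorem. The hard part, exactly as in the first order case of Dietze and Nam in \cite{DN23}, is the absence of a P\'{o}lya--Szeg\"{o} rearrangement inequality for the polyharmonic operators; it is bypassed by Proposition \ref{prop 2.3}, which produces a radial competitor $v$ with $\int|\Delta^{m}v|^{2}=\int|\Delta^{m}u|^{2}$, a larger weighted Hardy--Rellich term and a larger critical Sobolev norm, thereby reducing the general estimate to the already established radial estimate of Theorem \ref{T3}.
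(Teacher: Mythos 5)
Your proposal is correct and follows essentially the same two-case scheme as the paper's proof of Theorem \ref{Tg1}: the regime $(1-\theta)A\ge B$ handled via the Besov/Morrey Gagliardo--Nirenberg inequality of Theorem \ref{Thm 1.2} and Proposition \ref{lem 2.2}, and the regime $(1-\theta)A<B$ handled via the higher-order Talenti comparison of Proposition \ref{prop 2.3} to reduce to the radial inequality of Theorem \ref{T3}. The only cosmetic difference is that you write $\Vert\nabla^{2m}u\Vert_{L^2}\simeq\Vert\Delta^m u\Vert_{L^2}$ explicitly where the paper leaves it implicit.
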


\begin{proof}
Let $u\in C_{0}^{\infty}\left(  \mathbb{R}^{N}\right)  $ and denote
\[
A=\int_{\mathbb{R}^{N}}|\Delta^{m}u|^{2}dx,\;\;B=C_{1}(N,m)\sup_{y\in
\mathbb{R}^{N}}\int_{\mathbb{R}^{N}}\dfrac{|u(x)|^{2}}{|x-y|^{4m}}dx.
\]
Note that by the Rellich inequality (Theorem \ref{Thm 1.3}), we have that
$A\geq B$.

\textbf{Case 1:} $(1-\theta)A\geq B$\textbf{.} In this case, we have
\[
B\geq C_{1}(N,m)\sup_{r>0,y\in\mathbb{R}^{N}}\int_{B(y,r)}\dfrac{|u(x)|^{2}%
}{|x-y|^{4m}}dx\geq C_{1}(N,m)\sup_{r>0,y\in\mathbb{R}^{N}}\dfrac{1}{r^{4m}%
}\int_{B(y,r)}|u(x)|^{2}dx.
\]
Moreover, since $A-B\geq\theta A$, it follows that
\begin{align*}
(A-B)^{\theta}B^{1-\theta}  &  \gtrsim\left\Vert \Delta^{m}u\right\Vert
_{L^{2}}^{2\theta}\left(  \sup_{r>0,y\in\mathbb{R}^{N}}\dfrac{1}{r^{4m}}%
\int_{B(y,r)}|u(x)|^{2}dx\right)  ^{1-\theta}\\
&  \gtrsim\left\Vert \Delta^{m}u\right\Vert _{L^{2}}^{2\theta}\left\Vert
u\right\Vert _{\mathbf{M}^{2,N-4m}}^{2(1-\theta)}.
\end{align*}
Then, it suffices to prove that
\begin{equation}
\left\Vert \Delta^{m}u\right\Vert _{L^{2}}^{2\theta}\left\Vert u\right\Vert
_{\mathbf{M}^{2,N-4m}}^{2(1-\theta)}\gtrsim\left\Vert u\right\Vert
_{L^{2N/(N-4m)}}^{2}. \label{eq 2.10}%
\end{equation}
Applying Theorem \ref{Thm 1.2} with $k=0,r=\frac{2N}{N-4m}$, $m=2m$ and $p=2$,
we have
\[
\frac{2N}{N-4m}=\frac{2(2m+s)}{s},
\]
which implies that $s=\frac{N-4m}{2}$, and
\begin{align*}
\left\Vert u\right\Vert _{L^{2N/(N-4m)}}  &  \lesssim\left\Vert u\right\Vert
_{\dot{B}^{-s}}^{4m/N}\left\Vert \Delta^{m}u\right\Vert _{L^{2}}^{(N-4m)/N}\\
&  \lesssim\left\Vert u\right\Vert _{M^{1,s}}^{4m/N}\left\Vert \Delta
^{m}u\right\Vert _{L^{2}}^{(N-4m)/N}\\
&  \lesssim\left\Vert u\right\Vert _{M^{2,2s}}^{4m/N}\left\Vert \Delta
^{m}u\right\Vert _{L^{2}}^{(N-4m)/N},
\end{align*}
where the second and third estimate are from Proposition \ref{lem 2.2}. The
last one is exactly \eqref{eq 2.10} with $\theta=\dfrac{N-4m}{N}$. Combining
this with the Sobolev inequality $\left\Vert u\right\Vert _{L^{2N/(N-4m)}%
}\lesssim\left\Vert \Delta^{m}u\right\Vert _{L^{2}}$, we get \eqref{eq 2.10}
with $\theta\geq\dfrac{N-4m}{N}.$

\textbf{Case 2:} $(1-\theta)A<B$. Recall that the function $(A-B)^{\theta
}B^{1-\theta}$ is non-increasing with respect to $B$ if $0\leq(1-\theta
)A<B\leq A$.

Let $\Omega=supp\left(  u\right)  $ and $f=(-\Delta)^{m}u$, then $u$ is the
solution of the problem
\[%
\begin{cases}
(-\Delta)^{m}u=f & \text{ in }\Omega\\
(-\Delta)^{i}u=0 & \text{ on }\partial\Omega
\end{cases}
\]
for all $i=0,1,\ldots,m-1$. Let $\Omega^{\ast}$ the ball centered at the
origin such that $\left\vert \Omega^{\ast}\right\vert =\left\vert
\Omega\right\vert $ and $f^{\ast}$ the standard Schwarz rearrangement of $f$.
We also consider the following Dirichlet problem
\[
\left\{
\begin{array}
[c]{cc}%
(-\Delta)^{m}v=f^{\ast} & \text{in }\Omega^{\ast}\\
(-\Delta)^{i}v=0 & \text{on }\partial\Omega^{\ast}%
\end{array}
\right.
\]
for all $i=0,1,\ldots,m-1$. Then, from Proposition \ref{prop 2.3} $u^{\ast
}\leq v$. Therefore, by the Hardy-Littlewood inequality, we have
\[
\sup_{y\in\mathbb{R}^{N}}\int_{\mathbb{R}^{N}}\dfrac{|u(x)|^{2}}{|x-y|^{4m}%
}dx\leq\int_{\mathbb{R}^{N}}\dfrac{|u^{\ast}|^{2}}{|x|^{4m}}dx\leq
\int_{\mathbb{R}^{N}}\dfrac{|v|^{2}}{|x|^{4m}}dx.
\]
We also note that
\[
A=\int_{\mathbb{R}^{N}}|\Delta^{m}u|^{2}dx=\int_{\mathbb{R}^{N}}|f|^{2}%
dx=\int_{\mathbb{R}^{N}}|f^{\ast}|^{2}dx=\int_{\mathbb{R}^{N}}|\Delta
^{m}v|^{2}dx.
\]
Therefore by Theorem \ref{Thm 1.3}, we have that
\[
(1-\theta)A<B\leq C_{1}(N,m)\int_{\mathbb{R}^{N}}\dfrac{|v|^{2}}{|x|^{4m}%
}dx\leq A.
\]
Since the function $(A-x)^{\theta}x^{1-\theta}$ is non-increasing on $\left[
(1-\theta)A,A\right]  $, we get
\begin{align*}
&  \left(  \int_{\mathbb{R}^{N}}|\Delta^{m}u|^{2}dx-C_{1}(N,m)\sup
_{y\in\mathbb{R}^{N}}\int_{\mathbb{R}^{N}}\dfrac{|u(x)|^{2}}{|x-y|^{4m}%
}dx\right)  ^{\theta}\left(  \sup_{y\in\mathbb{R}^{N}}\int_{\mathbb{R}^{N}%
}\dfrac{|u(x)|^{2}}{|x-y|^{4m}}dx\right)  ^{1-\theta}\\
&  \geq\left(  \int_{\mathbb{R}^{N}}|\Delta^{m}v|^{2}dx-C_{1}(N,m)\int
_{\mathbb{R}^{N}}\dfrac{|v|^{2}}{|x|^{4m}}dx\right)  ^{\theta}\left(
\int_{\mathbb{R}^{N}}\dfrac{|v|^{2}}{|x|^{4m}}dx\right)  ^{1-\theta}.
\end{align*}
Also, note that $\left\Vert u\right\Vert _{L^{\frac{2N}{N-4m}}}^{2}=\left\Vert
u^{\ast}\right\Vert _{L^{\frac{2N}{N-4m}}}^{2}\leq\left\Vert v\right\Vert
_{L^{\frac{2N}{N-4m}}}^{2}$. Therefore, it is now enough to show that
\begin{equation}
\left(  \int_{\mathbb{R}^{N}}|\Delta^{m}u|^{2}dx-C_{1}(N,m)\int_{\mathbb{R}%
^{N}}\dfrac{|u|^{2}}{|x|^{4m}}dx\right)  ^{\theta}\left(  \int_{\mathbb{R}%
^{N}}\dfrac{|u|^{2}}{|x|^{4m}}dx\right)  ^{1-\theta}\gtrsim\left\Vert
u\right\Vert _{L^{\frac{2N}{N-4m}}}^{2} \label{2.11}%
\end{equation}
for all radial functions $u$, which is just a consequence of Theorem \ref{T3}.
Noting that, in this case, we need $\theta=1/N$. Thus, we complete the proof.
\end{proof}

\begin{theorem}
\label{Tg2} With $N=1/\theta=4m+3$ and $C_{2}(N,m)$ defined in Theorem
\ref{Thm 1.3}, we have
\begin{align}
&  \left(  \int_{\mathbb{R}^{N}}|\nabla\Delta^{m}u|^{2}dx-C_{2}(N,m)\sup
_{y\in\mathbb{R}^{N}}\int_{\mathbb{R}^{N}}\dfrac{|u(x)|^{2}}{|x-y|^{4m+2}%
}dx\right)  ^{\theta}\left(  \sup_{y\in\mathbb{R}^{N}}\int_{\mathbb{R}^{N}%
}\dfrac{|u(x)|^{2}}{|x-y|^{4m+2}}dx\right)  ^{1-\theta}\nonumber\\
&  \geq C\left\Vert u\right\Vert _{L^{\frac{2N}{N-4m-2}}}^{2} \label{odd-0i1}%
\end{align}
holds with a constant $C=C(N,\theta)>0$ independent of $u\in D^{2m+1,2}%
(\mathbb{R}^{N})$.
\end{theorem}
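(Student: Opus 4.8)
The plan is to follow, \emph{mutatis mutandis}, the two-case argument used to prove Theorem \ref{Tg1}, the only genuinely new point being the reduction to radial functions for the order-$(2m+1)$ operator $\nabla\Delta^{m}$. Fix $u\in C_{0}^{\infty}(\mathbb{R}^{N})$ (the general case $u\in D^{2m+1,2}(\mathbb{R}^{N})$ follows by density) and set
\[
A=\int_{\mathbb{R}^{N}}|\nabla\Delta^{m}u|^{2}\,dx,\qquad B=C_{2}(N,m)\sup_{y\in\mathbb{R}^{N}}\int_{\mathbb{R}^{N}}\dfrac{|u(x)|^{2}}{|x-y|^{4m+2}}\,dx .
\]
By the translation-invariant Hardy--Rellich inequality of Theorem \ref{Thm 1.3} (case $k=2m+1$) we have $A\geq B\geq 0$, so the left-hand side of \eqref{odd-0i1} is well defined, and, just as for Theorem \ref{Tg1}, we split according to whether $(1-\theta)A\geq B$ or $(1-\theta)A<B$, using throughout that $x\mapsto (A-x)^{\theta}x^{1-\theta}$ is nonincreasing on $[(1-\theta)A,A]$.

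\textbf{Case 1: $(1-\theta)A\geq B$.} Then $A-B\geq\theta A$, and bounding $B$ below by a Morrey seminorm exactly as in Theorem \ref{Tg1} reduces the claim to $\|\nabla\Delta^{m}u\|_{L^{2}}^{2\theta}\,\|u\|_{\mathbf{M}^{2,N-4m-2}}^{2(1-\theta)}\gtrsim\|u\|_{L^{2N/(N-4m-2)}}^{2}$. For this I would invoke the higher-order Besov--Gagliardo--Nirenberg inequality of Theorem \ref{Thm 1.2} with derivative order $2m+1$, $k=0$, $p=2$ and $r=\tfrac{2N}{N-4m-2}$; the interpolation relations then force $s=\tfrac{N-4m-2}{2}$ and the exponents $\tfrac{4m+2}{N},\tfrac{N-4m-2}{N}$. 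Combining this with the embeddings $\mathbf{M}^{2,N-4m-2}\hookrightarrow\mathbf{M}^{1,(N-4m-2)/2}\hookrightarrow\dot{B}^{-(N-4m-2)/2}$ from Proposition \ref{lem 2.2}, with $\|\nabla^{2m+1}u\|_{L^{2}}\lesssim\|\nabla\Delta^{m}u\|_{L^{2}}$ (Plancherel), and with the Sobolev inequality $\|u\|_{L^{2N/(N-4m-2)}}\lesssim\|\nabla\Delta^{m}u\|_{L^{2}}$, one gets the estimate for every $\theta\geq\tfrac{N-4m-2}{N}$.

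\textbf{Case 2: $(1-\theta)A<B$.} This is the delicate step, where we reduce to a radial function. Put $f=(-\Delta)^{m}u$, which is smooth with $\operatorname{supp}f\subseteq\operatorname{supp}u$, and let $v=(-\Delta)^{-m}\bigl(|f|^{\ast}\bigr)$ be the $m$-fold Newtonian potential of the symmetric decreasing rearrangement $|f|^{\ast}$ (legitimate since $2m<N$); then $v$ is radial and nonnegative, and $v\in D^{2m+1,2}_{\mathrm{rad}}(\mathbb{R}^{N})$ because $|f|^{\ast}\in H^{1}(\mathbb{R}^{N})$. Iterating Talenti's comparison principle as in Proposition \ref{prop 2.3} gives $u^{\ast}\leq v$ pointwise, whence by the Hardy--Littlewood inequality $B\leq B_{v}:=C_{2}(N,m)\int_{\mathbb{R}^{N}}|v|^{2}|x|^{-4m-2}\,dx$ and $\|u\|_{L^{2N/(N-4m-2)}}\leq\|v\|_{L^{2N/(N-4m-2)}}$. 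The key new point, compared with the even-order case, is that $\int_{\mathbb{R}^{N}}|\nabla\Delta^{m}u|^{2}\,dx=\int_{\mathbb{R}^{N}}\bigl|\nabla(\Delta^{m}u)\bigr|^{2}\,dx$ is a \emph{first}-order Dirichlet energy of the scalar function $\Delta^{m}u$, so the classical P\'olya--Szeg\"o inequality (unavailable at higher order) may be applied at the top level and, with $\bigl|\nabla|f|\bigr|\leq|\nabla f|$, yields $A_{v}:=\int_{\mathbb{R}^{N}}|\nabla\Delta^{m}v|^{2}\,dx\leq A$ --- an inequality rather than the identity $A_{v}=A$ available in the even-order setting. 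Since the Hardy--Rellich inequality applied to $v$ gives $B_{v}\leq A_{v}\leq A$, we have $B,B_{v}\in[(1-\theta)A,A]$ with $B\leq B_{v}$, and using that $x\mapsto (A-x)^{\theta}x^{1-\theta}$ is nonincreasing on $[(1-\theta)A,A]$ while $t\mapsto (t-B_{v})^{\theta}$ is nondecreasing,
\[
(A-B)^{\theta}B^{1-\theta}\geq (A-B_{v})^{\theta}B_{v}^{1-\theta}\geq (A_{v}-B_{v})^{\theta}B_{v}^{1-\theta}\gtrsim\|v\|_{L^{2N/(N-4m-2)}}^{2}\geq\|u\|_{L^{2N/(N-4m-2)}}^{2},
\]
the third inequality being the radial estimate \eqref{odd-1i} of Theorem \ref{T3} applied to $v$, valid since $\theta=\tfrac{1}{N}$.

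It remains to combine the two cases. As $\theta=\tfrac{1}{N}$ is imposed by Case 2 (through Theorem \ref{T3}) while Case 1 requires $\tfrac{1}{N}\geq\tfrac{N-4m-2}{N}$, i.e.\ $N\leq 4m+3$, and as $N>4m+2$ is needed for the Hardy--Rellich and Sobolev inequalities, one is forced to $N=4m+3$, $\theta=\tfrac{1}{4m+3}$, which is exactly the stated range. I expect the reduction to the radial case in Case 2 to be the only real obstacle: a P\'olya--Szeg\"o principle for $\nabla\Delta^{m}$ itself is missing and not expected, and the workaround is precisely the observation that $|\nabla\Delta^{m}u|^{2}$ is a \emph{first}-order energy of $\Delta^{m}u$, which lets one symmetrize $(-\Delta)^{m}u$ without increasing the energy while still controlling $u$ itself through the iterated comparison principle of Proposition \ref{prop 2.3}; the remaining estimates are routine adaptations of the proofs of Theorem \ref{Tg1} and Theorem \ref{T3}.
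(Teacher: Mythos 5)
Your proposal is correct and follows essentially the same two-case strategy as the paper's proof, including the paper's key observation that, unlike the even-order case where one has the \emph{identity} $\int|\Delta^m u|^2=\|f\|_{L^2}^2=\|f^*\|_{L^2}^2$, here $\int|\nabla\Delta^m u|^2$ is a first-order Dirichlet energy of $f=(-\Delta)^m u$ to which the classical P\'olya--Szeg\"o inequality applies, giving the one-sided bound $A_v\le A$. The chain $(A-B)^\theta B^{1-\theta}\ge(A-B_v)^\theta B_v^{1-\theta}\ge(A_v-B_v)^\theta B_v^{1-\theta}$ and the invocation of the radial estimate of Theorem \ref{T3} with $\theta=1/N$ are exactly what the paper does, as is the final arithmetic forcing $N=4m+3$.

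The one place where you deviate from the paper is in the construction of $v$ in Case 2. The paper takes $v$ to be the weak solution of the iterated Dirichlet problem $(-\Delta)^m v=f^*$ on the symmetrized ball $\Omega^*=(\operatorname{supp}u)^*$ with Navier boundary conditions, so that $u^*\le v$ follows directly from Proposition \ref{prop 2.3}. You instead take the free-space Riesz potential $v=(-\Delta)^{-m}(|f|^*)$ on all of $\mathbb{R}^N$. That choice is legitimate — by the iterated maximum principle the free-space potential dominates the Dirichlet solution, so $u^*\le v$ still holds, and the bounds $B\le B_v$, $A_v\le A$, and the $L^{2N/(N-4m-2)}$-comparison all go through — but note that Proposition \ref{prop 2.3} as stated is a bounded-domain comparison, so an extra line justifying $v_{\Omega^*}\le v$ is needed before you can cite it to conclude $u^*\le v$. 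Your variant has a mild advantage in regularity: the free-space $v$ is a globally defined smooth radial function lying in $D^{2m+1,2}_{\mathrm{rad}}(\mathbb{R}^N)$, whereas the bounded-domain solution must be extended by zero, which is slightly awkward to reconcile with the smoothness hypotheses in Theorem \ref{T3}. The rest of Case 1, the embeddings from Proposition \ref{lem 2.2} and Theorem \ref{Thm 1.2}, and the concluding range discussion are all as in the paper.
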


\begin{proof}
Let $u\in C_{0}^{\infty}\left(  \mathbb{R}^{N}\right)  $ and denote
\[
A=\int_{\mathbb{R}^{N}}|\nabla\Delta^{m}u|^{2}dx,\;\;B=C_{2}(N,m)\sup
_{y\in\mathbb{R}^{N}}\int_{\mathbb{R}^{N}}\dfrac{|u(x)|^{2}}{|x-y|^{4m+2}}dx.
\]
\textbf{Case 1:} $(1-\theta)A\geq B$\textbf{.} In this case, we have
\begin{align*}
B  &  \geq C_{2}(N,m)\sup_{r>0,y\in\mathbb{R}^{N}}\int_{B(y,r)}\dfrac
{|u(x)|^{2}}{|x-y|^{4m+2}}dx\\
&  \geq C_{2}(N,m)\sup_{r>0,y\in\mathbb{R}^{N}}\dfrac{1}{r^{4m+2}}%
\int_{B(y,r)}|u(x)|^{2}dx.
\end{align*}
Moreover, since $A-B\geq\theta A$, it follows that
\begin{align*}
(A-B)^{\theta}B^{1-\theta}  &  \gtrsim\left\Vert \nabla\Delta^{m}u\right\Vert
_{L^{2}}^{2\theta}\left(  \sup_{r>0,y\in\mathbb{R}^{N}}\dfrac{1}{r^{4m+2}}%
\int_{B(y,r)}|u(x)|^{2}dx\right)  ^{1-\theta}\\
&  \gtrsim\left\Vert \nabla\Delta^{m}u\right\Vert _{L^{2}}^{2\theta}\left\Vert
u\right\Vert _{\mathbf{M}^{2,N-4m-2}}^{2(1-\theta)}.
\end{align*}
Then, it suffices to prove that
\begin{equation}
\left\Vert \nabla\Delta^{m}u\right\Vert _{L^{2}}^{2\theta}\left\Vert
u\right\Vert _{\mathbf{M}^{2,N-4m-2}}^{2(1-\theta)}\gtrsim\left\Vert
u\right\Vert _{L^{2N/(N-4m-2)}}^{2}. \label{eq 2.14}%
\end{equation}
Applying Theorem \ref{Thm 1.2} with $k=0,r=\frac{2N}{N-4m-2}$, and $m=2m+1$,
we have
\[
\frac{2N}{N-4m-2}=\frac{2(2m+1+s)}{s},
\]
which implies that $s=\frac{N-4m-2}{2}$, and
\begin{align*}
\left\Vert u\right\Vert _{L^{2N/(N-4m-2)}}  &  \lesssim\left\Vert u\right\Vert
_{\dot{B}^{-s}}^{(4m+2)/N}\left\Vert \nabla\Delta u\right\Vert _{L^{2}%
}^{(N-4m-2)/N}\\
&  \lesssim\left\Vert u\right\Vert _{M^{1,s}}^{(4m+2)/N}\left\Vert
\nabla\Delta^{m}u\right\Vert _{L^{2}}^{(N-4m-2)/N}\\
&  \lesssim\left\Vert u\right\Vert _{M^{2,2s}}^{(4m+2)/N}\left\Vert
\nabla\Delta^{m}u\right\Vert _{L^{2}}^{(N-4m-2)/N},
\end{align*}
where the second and third estimate are from Proposition \ref{lem 2.2}. The
last one is exactly \eqref{eq 2.14} with $\theta=\dfrac{N-4m-2}{N}$. Combining
this with the Sobolev inequality $\left\Vert u\right\Vert _{L^{2N/(N-4m-2)}%
}\lesssim\left\Vert \nabla\Delta^{m}u\right\Vert _{L^{2}}$, we get
\eqref{eq 2.14} with $\theta\geq\dfrac{N-4m-2}{N}.$\newline\textbf{Case 2:
}$(1-\theta)A<B$\textbf{.} Using the same notations for $\Omega$, $f$, and $v$
as in the proof of Theorem \ref{Tg1}, we have by the Hardy-Littlewood
inequality that
\[
\sup_{y\in\mathbb{R}^{N}}\int_{\mathbb{R}^{N}}\dfrac{|u(x)|^{2}}{|x-y|^{4m+2}%
}dx\leq\int_{\mathbb{R}^{N}}\dfrac{|u^{\ast}|^{2}}{|x|^{4m+2}}dx\leq
\int_{\mathbb{R}^{N}}\dfrac{|v|^{2}}{|x|^{4m+2}}dx.
\]
Also, by the P\'{o}lya-Szeg\"{o} inequality, we get
\[
A=\int_{\mathbb{R}^{N}}|\nabla\Delta^{m}u|^{2}dx=\int_{\mathbb{R}^{N}}|\nabla
f|^{2}dx\geq\int_{\mathbb{R}^{N}}|\nabla f^{\ast}|^{2}dx=\int_{\mathbb{R}^{N}%
}|\nabla\Delta^{m}v|^{2}dx.
\]
Therefore, by Theorem \ref{Thm 1.3}, we obtain
\[
(1-\theta)A<B\leq C_{2}(N,m)\int_{\mathbb{R}^{N}}\dfrac{|v|^{2}}{|x|^{4m+2}%
}dx\leq\int_{\mathbb{R}^{N}}|\nabla\Delta^{m}v|^{2}dx\leq A.
\]
Since the function $(A-x)^{\theta}x^{1-\theta}$ is non-increasing on $\left[
(1-\theta)A,A\right]  $, we get
\begin{align*}
&  \left(  \int_{\mathbb{R}^{N}}|\nabla\Delta^{m}u|^{2}dx-C_{2}(N,m)\sup
_{y\in\mathbb{R}^{N}}\int_{\mathbb{R}^{N}}\dfrac{|u(x)|^{2}}{|x-y|^{4m+2}%
}dx\right)  ^{\theta}\left(  \sup_{y\in\mathbb{R}^{N}}\int_{\mathbb{R}^{N}%
}\dfrac{|u(x)|^{2}}{|x-y|^{4m+2}}dx\right)  ^{1-\theta}\\
&  \gtrsim\left(  \int_{\mathbb{R}^{N}}|\nabla\Delta v|^{2}dx-C_{2}%
(N,m)\int_{\mathbb{R}^{N}}\dfrac{|v|^{2}}{|x|^{4m+2}}dx\right)  ^{\theta
}\left(  \int_{\mathbb{R}^{N}}\dfrac{|v|^{2}}{|x|^{4m+2}}dx\right)
^{1-\theta}.
\end{align*}
Also, note that $\left\Vert u\right\Vert _{L^{\frac{2N}{N-4m-2}}}%
^{2}=\left\Vert u^{\ast}\right\Vert _{L^{\frac{2N}{N-4m-2}}}^{2}\leq\left\Vert
v\right\Vert _{L^{\frac{2N}{N-4m-2}}}^{2}$. Therefore, it is now enough to
show that
\begin{equation}
\left(  \int_{\mathbb{R}^{N}}|\nabla\Delta^{m}u|^{2}dx-C_{2}(N,m)\int
_{\mathbb{R}^{N}}\dfrac{|u|^{2}}{|x|^{4m+2}}dx\right)  ^{\theta}\left(
\int_{\mathbb{R}^{N}}\dfrac{|u|^{2}}{|x|^{4m+2}}dx\right)  ^{1-\theta}%
\gtrsim\left\Vert u\right\Vert _{L^{\frac{2N}{N-4m-2}}}^{2} \label{2.15}%
\end{equation}
for all radial functions $u$, which is just a consequence of Theorem \ref{T3}.
Also, the condition $\theta=1/N$ is necessary in this case.
\end{proof}

\section{Proof of Theorem \ref{T2}}

Inspired by \cite[Theorem $2$]{DN23}, we would like to extend the
Hardy-Sobolev interpolation inequalities we have obtained in the previous
section by using Lorentz norms. First, we will recall the following results
that will be used in our proof.

\begin{theorem}
[Theorem 1.3 in \cite{DuLL22}]Let $N\geq p>1$, $0<R\leq\infty$, $A\geq0$ and
$B$ be $C^{1}$-functions on $\left(  0,R\right)  $. If $\left(  r^{N-1}%
A,r^{N-1}B\right)  $ is a $p$-Bessel pair on $\left(  0,R\right)  $, that is,
the ODE $\left(  A\left(  r\right)  \left\vert y^{\prime}\right\vert
^{p-2}y^{\prime}\right)  ^{\prime}+B\left(  r\right)  \left\vert y\right\vert
^{p-2}y=0$ has a positive solution $\varphi$ on $\left(  0,R\right)  $, then
for all smooth function $u$, we have%
\[%
{\displaystyle\int\limits_{B_{R}}}
A\left(  \left\vert x\right\vert \right)  \left\vert \nabla u\right\vert
^{p}dx-%
{\displaystyle\int\limits_{B_{R}}}
B\left(  \left\vert x\right\vert \right)  \left\vert u\right\vert ^{p}dx=%
{\displaystyle\int\limits_{B_{R}}}
A\left(  \left\vert x\right\vert \right)  \mathcal{C}_{p}\left(  \nabla
u,\varphi\nabla\left(  \frac{u}{\varphi}\right)  \right)  dx.
\]
Here $\mathcal{C}_{p}\left(  a,b\right)  =\left\vert a\right\vert ^{p}+\left(
p-1\right)  \left\vert b\right\vert ^{p}-p\left\vert a-b\right\vert
^{p-2}\left(  a-b\right)  \cdot b$.
\end{theorem}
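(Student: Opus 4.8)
\emph{Proof sketch.} The plan is to derive the identity from the nonlinear ground-state substitution $u=\varphi w$, equivalently from a generalized Picone identity. By density it suffices to take $u\in C_{0}^{\infty}(B_{R})$, and there is no need to assume $u\ge0$ once one works with $|u|^{p}$ below. The first step is to put the Bessel-pair ODE into divergence form: since $\varphi>0$ is radial and satisfies $\bigl(r^{N-1}A(r)|\varphi'|^{p-2}\varphi'\bigr)'+r^{N-1}B(r)\varphi^{p-1}=0$ on $(0,R)$, the function $\varphi$ is a positive weak solution of
\[
-\mathrm{div}\bigl(A(|x|)\,|\nabla\varphi|^{p-2}\nabla\varphi\bigr)=B(|x|)\,\varphi^{p-1}\qquad\text{in }B_{R},
\]
and $\varphi$ is of class $C^{1}$ on $(0,R)$ by standard regularity for quasilinear ODEs.

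Next, setting $w=u/\varphi$ one has $\nabla u-\varphi\nabla w=\tfrac{u}{\varphi}\nabla\varphi$, and the core of the argument is the pointwise algebraic identity
\[
A|\nabla u|^{p}-A\,|\nabla\varphi|^{p-2}\nabla\varphi\cdot\nabla\!\Bigl(\tfrac{|u|^{p}}{\varphi^{p-1}}\Bigr)=A\,\mathcal{C}_{p}\bigl(\nabla u,\ \varphi\nabla(u/\varphi)\bigr),
\]
valid for $u$ of either sign, which one verifies by expanding $\nabla\bigl(|u|^{p}/\varphi^{p-1}\bigr)$ with the chain and product rules and regrouping the terms in $a=\nabla u$ and $b=\varphi\nabla(u/\varphi)$. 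The nonnegativity of $\mathcal{C}_{p}(a,b)$ --- which is what upgrades the integrated identity to the Hardy-type inequality $\int_{B_{R}}A|\nabla u|^{p}\ge\int_{B_{R}}B|u|^{p}$ --- is precisely the convexity estimate $|a|^{p}\ge|c|^{p}+p|c|^{p-2}c\cdot(a-c)$ with $c=a-b=\tfrac{u}{\varphi}\nabla\varphi$.

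It then remains to integrate the pointwise identity over $B_{R}$ and to use the nonnegative function $|u|^{p}/\varphi^{p-1}$ as a test function in the weak equation for $\varphi$; this gives
\[
\int_{B_{R}}A\,|\nabla\varphi|^{p-2}\nabla\varphi\cdot\nabla\bigl(|u|^{p}/\varphi^{p-1}\bigr)\,dx=\int_{B_{R}}B\,|u|^{p}\,dx,
\]
and substituting it into the integrated pointwise identity yields exactly $\int_{B_{R}}A|\nabla u|^{p}\,dx-\int_{B_{R}}B|u|^{p}\,dx=\int_{B_{R}}A\,\mathcal{C}_{p}\bigl(\nabla u,\varphi\nabla(u/\varphi)\bigr)\,dx$, which is the assertion; a general smooth $u$ follows by density whenever the integrals are finite.

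The step I expect to be the main obstacle is precisely that last relation, i.e.\ the admissibility of $|u|^{p}/\varphi^{p-1}$ as a test function, since $\varphi$ is merely a positive solution and may vanish or blow up at the origin and near $\partial B_{R}$. I would obtain it by testing with $\chi_{\varepsilon}\,|u|^{p}/\varphi^{p-1}$, where $\chi_{\varepsilon}$ is a radial cut-off supported in $\{\varepsilon<|x|<R-\varepsilon\}$ with $|\nabla\chi_{\varepsilon}|\lesssim1/\varepsilon$, and then letting $\varepsilon\to0$; here the precise structure of the Bessel pair enters, as one must check that the error term involving $\nabla\chi_{\varepsilon}$ --- morally $r^{N-1}A(r)|\varphi'(r)|^{p-2}\varphi'(r)\,|u(r)|^{p}\varphi(r)^{1-p}$ weighted near $r=\varepsilon$ and $r=R-\varepsilon$ --- tends to $0$, using the local $C^{1}$ control of $\varphi$ on $(0,R)$, the monotonicity of $r\mapsto r^{N-1}A|\varphi'|^{p-2}\varphi'$ read off from the ODE (its derivative being $-r^{N-1}B\varphi^{p-1}$), and the compact support of $u$. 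The pointwise identity of the second step itself holds everywhere and integrates without further difficulty.
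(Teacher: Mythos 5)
The paper does not actually prove this statement; it is quoted from \cite{DuLL22}, so there is no internal proof here to match your argument against. Your scheme --- ground-state substitution / Picone identity, integrate, test the weak $\varphi$-equation with $|u|^{p}/\varphi^{p-1}$ --- is the standard and almost certainly the intended route, and the structure of your argument is sound. Two cautions, one of them substantive.

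First, carrying out the expansion you describe, with $a=\nabla u$, $b=\varphi\nabla(u/\varphi)$, so that $a-b=(u/\varphi)\nabla\varphi$, one finds
\[
A|\nabla u|^{p}-A|\nabla\varphi|^{p-2}\nabla\varphi\cdot\nabla\Bigl(\frac{|u|^{p}}{\varphi^{p-1}}\Bigr)
=A\Bigl(|a|^{p}-|a-b|^{p}-p|a-b|^{p-2}(a-b)\cdot b\Bigr),
\]
which is exactly the convexity remainder $|a|^{p}-|c|^{p}-p|c|^{p-2}c\cdot(a-c)$ you invoke with $c=a-b$. That is \emph{not} the displayed $\mathcal{C}_{p}(a,b)=|a|^{p}+(p-1)|b|^{p}-p|a-b|^{p-2}(a-b)\cdot b$; the two differ by $(p-1)|b|^{p}+|a-b|^{p}$. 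For $p=2$, the first gives $|b|^{2}=\varphi^{2}|\nabla(u/\varphi)|^{2}$, the Frank--Seiringer ground-state remainder, whereas the printed expression gives $|a-b|^{2}+2|b|^{2}$, which does \emph{not} satisfy the claimed integral identity. So the transcription of $\mathcal{C}_{p}$ (and of the Bessel-pair ODE, which should carry the $r^{N-1}$ weight as you correctly assumed) in the present paper contains a misprint; your computation is the one that actually closes, and you should record it explicitly rather than asserting ``one verifies by expanding and regrouping,'' since the pointwise identity you assert does not hold with the formula as printed.

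Second, in the cutoff step you justify the vanishing of the boundary term at the origin by ``the monotonicity of $r\mapsto r^{N-1}A|\varphi'|^{p-2}\varphi'$,'' whose derivative is $-r^{N-1}B\varphi^{p-1}$; but that has a sign only if $B$ does, and the hypotheses impose no sign on $B$. The passage $\varepsilon\to0$ near the origin genuinely requires quantitative control on $A$, $\varphi$, $\varphi'$ as $r\to0^{+}$, or a finiteness hypothesis implicit in ``smooth $u$ with the integrals finite.'' As written this step is not self-contained; it is the place where a fully rigorous proof of the integral identity has to do the work your sketch flags but does not complete.
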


In particular, when $p\geq2$, it is easy to verify that $\mathcal{C}%
_{p}\left(  a,b\right)  \geq c_{p}\left\vert y\right\vert ^{p}$. See
\cite{CKLL23, DuLL22}, for instance. Therefore, we have

\begin{lemma}
\label{lem 3.2}Let $N\geq p\geq2$, $A\geq0$ and $B$ be $C^{1}$-functions on
$\left(  0,\infty\right)  $. Assume that $\left(  r^{N-1}A,r^{N-1}B\right)  $
is a $p$-Bessel pair on $\left(  0,\infty\right)  $ with a positive solution
$\varphi$. We have, for all $u\in C_{0}^{\infty}(\mathbb{R}^{N})$ that%
\[
\int_{\mathbb{R}^{N}}A\left(  \left\vert x\right\vert \right)  \left\vert
\nabla u\right\vert ^{p}dx-\int_{\mathbb{R}^{N}}B\left(  \left\vert
x\right\vert \right)  \left\vert u\right\vert ^{p}dx\geq c_{p}\int
_{\mathbb{R}^{N}}A\left(  \left\vert x\right\vert \right)  \varphi^{p}|\nabla
v|^{p}dx,
\]
with $v=\frac{u}{\varphi}$.
\end{lemma}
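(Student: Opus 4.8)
The plan is to obtain the lemma as an immediate corollary of the Bessel-pair ground state representation (Theorem 1.3 in \cite{DuLL22}) recalled just above, combined with an elementary pointwise convexity estimate for the remainder functional $\mathcal{C}_p$.

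First I would apply Theorem 1.3 in \cite{DuLL22} with $R=\infty$, which is permitted since that theorem allows $0<R\leq\infty$; then $B_R=\mathbb{R}^N$. The hypothesis that $\left(r^{N-1}A,r^{N-1}B\right)$ is a $p$-Bessel pair on $(0,\infty)$ with a positive solution $\varphi$ is exactly what the theorem requires, so for every $u\in C_0^\infty(\mathbb{R}^N)$ one gets the exact identity
\[
\int_{\mathbb{R}^N}A(|x|)|\nabla u|^p\,dx-\int_{\mathbb{R}^N}B(|x|)|u|^p\,dx=\int_{\mathbb{R}^N}A(|x|)\,\mathcal{C}_p\!\left(\nabla u,\ \varphi\,\nabla\!\left(\tfrac{u}{\varphi}\right)\right)dx,
\]
with $\mathcal{C}_p(a,b)=|a|^p+(p-1)|b|^p-p|a-b|^{p-2}(a-b)\cdot b$.

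Next, since $p\geq 2$, I would invoke the standard strong-convexity (Clarkson-type) bound $\mathcal{C}_p(a,b)\geq c_p|b|^p$, valid for all $a,b\in\mathbb{R}^N$ with a constant $c_p>0$ depending only on $p$, as recorded in \cite{CKLL23, DuLL22}; it is precisely here that the restriction $p\geq 2$ is used. Applying it pointwise with $a=\nabla u(x)$ and $b=\varphi(|x|)\,\nabla v(x)$, where $v=u/\varphi$, and using $A\geq 0$ so that the right-hand integrand is nonnegative, yields
\[
\int_{\mathbb{R}^N}A(|x|)\,\mathcal{C}_p\!\left(\nabla u,\ \varphi\,\nabla v\right)dx\geq c_p\int_{\mathbb{R}^N}A(|x|)\,\varphi^p\,|\nabla v|^p\,dx,
\]
and combining the two displays gives exactly the asserted inequality.

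The argument is thus quite short once the ground state representation is available; the \emph{only} delicate point is already implicit in the cited theorem, namely that the quotient $v=u/\varphi$ and the integration by parts underlying the representation are legitimate even though $\varphi$ may vanish or blow up as $r\to 0^+$ (and as $r\to\infty$, since $R=\infty$). Because Theorem 1.3 in \cite{DuLL22} is stated for a general positive solution $\varphi$ of the Bessel ODE and for smooth $u$, this behavior near the singularities is already absorbed into its hypotheses, so I would simply cite it; no further analysis is required beyond the purely pointwise convexity estimate.
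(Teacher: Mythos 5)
Your proof is correct and follows essentially the same route the paper intends: apply the Bessel-pair identity of Theorem 1.3 in \cite{DuLL22} with $R=\infty$, then invoke the pointwise bound $\mathcal{C}_p(a,b)\geq c_p|b|^p$ for $p\geq 2$ (the paper's ``$|y|^p$'' is a typo for $|b|^p$) and use $A\geq 0$ to integrate; the paper presents exactly this two-step deduction, just compressed into the sentence preceding the lemma statement.
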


We are now ready to present a proof of Theorem \ref{T2}.

\begin{proof}
[Proof of Theorem \ref{T2}]Let $A=\left\Vert \nabla u\right\Vert _{p,q}^{q}$
and
\[
B=\omega_{N}^{\frac{q}{p}-1}\left(  \dfrac{N-p}{p}\right)  ^{q}\sup
_{y\in\mathbb{R}^{N}}\int_{\mathbb{R}^{N}}\dfrac{|u(x)|^{q}}{|x-y|^{N+q-\frac
{Nq}{p}}}dx.
\]

\textbf{Case 1:} $(1-\theta)A\geq B$. It suffices to prove that
\[
\Vert\nabla u\Vert_{p,q}^{q\theta}\left(  \sup_{y\in\mathbb{R}^{N}}%
\int_{\mathbb{R}^{N}}\dfrac{|u(x)|^{q}}{|x-y|^{N+q-\frac{Nq}{p}}}dx\right)
^{1-\theta}\gtrsim\left\Vert u\right\Vert _{L^{p^{\ast},r}}^{q},
\]
or just need to prove
\[
\Vert\nabla u\Vert_{p,q}^{\theta}\left(  \sup_{r>0,y\in\mathbb{R}^{N}}%
\dfrac{1}{r^{N+q-\frac{Nq}{p}}}\int_{B(y,r)}|u(x)|^{q}dx\right)
^{(1-\theta)/q}\gtrsim\left\Vert u\right\Vert _{L^{p^{\ast},r}},
\]
which is equivalent to
\begin{equation}
\Vert\nabla u\Vert_{p,q}^{\theta}\Vert u\Vert_{M^{q,\frac{Nq}{p}-q}}%
^{1-\theta}\gtrsim\left\Vert u\right\Vert _{L^{p^{\ast},r}}. \label{eq 3.5}%
\end{equation}
By using \cite[Lemma $3.1$]{DLL22} with $\sigma=\frac{N-p}{p}$, $q=p^{\ast}$,
we get
\[
\left\vert u\left(  x\right)  \right\vert \lesssim\Vert u\Vert_{\dot
{B}^{-\sigma}}^{\frac{1}{1+\sigma}}[\mathcal{M}(\nabla u)(x)]^{\frac{\sigma
}{1+\sigma}}.
\]
Taking the $L^{p^{\ast},r}$-norm in both sides of the above inequality and
using the property of the maximal Hardy-Littlewood function, we have
\[
\Vert u\Vert_{L^{p^{\ast},r}}\lesssim\Vert u\Vert_{\dot{B}^{-\sigma}}%
^{\frac{1}{1+\sigma}}\Vert\mathcal{M}(\nabla u)\Vert_{L^{p,\frac{r\sigma
}{1+\sigma}}}^{\frac{\sigma}{1+\sigma}}\lesssim\Vert u\Vert_{\dot{B}^{-\sigma
}}^{\frac{1}{1+\sigma}}\Vert\nabla u\Vert_{L^{p,\frac{r\sigma}{1+\sigma}}%
}^{\frac{\sigma}{1+\sigma}}=\Vert u\Vert_{\dot{B}^{-\sigma}}^{\frac{p}{N}%
}\Vert\nabla u\Vert_{L^{p,\frac{r(N-p)}{N}}}^{\frac{N-p}{N}},
\]
provided that $\dfrac{r(N-p)}{N}\geq1$ or $r\geq\dfrac{p^{\ast}}{p}$. From
Proposition \ref{lem 2.2}, it follows that
\[
\Vert u\Vert_{L^{p^{\ast},r}}\lesssim\Vert u\Vert_{M^{q,\frac{q(N-p)}{p}}%
}^{\frac{p}{N}}\Vert\nabla u\Vert_{L^{p,\frac{r(N-p)}{N}}}^{\frac{N-p}{N}},
\]
which is exactly \eqref{eq 3.5} with $q=\dfrac{r(N-p)}{N}$ and $\theta
=\dfrac{N-p}{N}$. Combining this with the Sobolev inequality with Lorentz norm \cite{A77}
(with $p\leq r\leq p^{\ast}$) that
\[
\Vert\nabla u\Vert_{L^{p,\frac{r(N-p)}{N}}}\geq\Vert\nabla u\Vert_{L^{p}%
}\gtrsim\Vert u\Vert_{L^{p^{\ast},p}}\geq\Vert u\Vert_{L^{p^{\ast},r}},
\]
we deduce \eqref{eq 3.5} with $q=\dfrac{r(N-p)}{N}$ and $\theta\geq\dfrac
{N-p}{N}$.

\textbf{Case 2:} $(1-\theta)A<B$. In this case, the function $(A-B)^{\theta
}B^{1-\theta}$ is nonincreasing with respect to $B$ if $0\leq(1-\theta)A<B\leq
A$. Let $u^{\ast}$ be the standard Schwarz rearrangement of $u$. Then, by
Hardy-Littlewood inequality, we have
\[
\sup_{y\in\mathbb{R}^{N}}\int_{\mathbb{R}^{N}}\dfrac{|u(x)|^{q}}%
{|x-y|^{N+q-\frac{Nq}{p}}}dx\leq\int_{\mathbb{R}^{N}}\dfrac{|u^{\ast}(x)|^{q}%
}{|x|^{N+q-\frac{Nq}{p}}}dx.
\]
Moreover, from \cite{A77}, we also have the following P\'{o}lya-Szeg\"{o}
inequality
\[
\Vert\nabla u\Vert_{p,q}^{q}\geq\Vert\nabla u^{\ast}\Vert_{p,q}^{q},
\]
if $1\leq q\leq p$. Also,
\[
\Vert u\Vert_{L^{p^{\ast},r}}=\omega_{N}^{\frac{r-p^{\ast}}{p^{\ast}r}}\left(
\int_{\Omega^{\ast}}[u^{\ast}(x)|x|^{N/p^{\ast}}]^{r}\dfrac{dx}{|x|^{N}%
}\right)  ^{1/r}=\Vert u^{\ast}\Vert_{L^{p^{\ast},r}}.
\]
Therefore, it suffices to prove \eqref{eq 3.5} for all radial functions $u$,
i.e.
\begin{align}
&  \left(  \left\Vert \nabla u\right\Vert _{p,q}^{q}-\omega_{N}^{\frac{q}%
{p}-1}\left(  \dfrac{N-p}{p}\right)  ^{q}\int_{\mathbb{R}^{N}}\dfrac{|u|^{q}%
}{|x|^{N+q-\frac{Nq}{p}}}dx\right)  ^{\theta}\left(  \int_{\mathbb{R}^{N}%
}\dfrac{|u|^{q}}{|x|^{N+q-\frac{Nq}{p}}}dx\right)  ^{1-\theta}\nonumber\\
&  \gtrsim\left\Vert u\right\Vert _{L^{p^{\ast},r}}^{q}, \label{eq 3.6}%
\end{align}
where $q=\dfrac{r(N-p)}{N},\theta\geq\dfrac{N-p}{N}$, and $\frac{p^{\ast}}%
{p}\leq r\leq p^{\ast}$. We have, if $u$ is radial, then
\[
\Vert\nabla u\Vert_{p,q}^{q}=\omega_{N}^{\frac{q}{p}-1}\int_{\mathbb{R}^{N}%
}\dfrac{|\nabla u|^{q}}{|x|^{N(1-\frac{q}{p})}}dx,
\]
which follows that
\begin{align}
&  \left\Vert \nabla u\right\Vert _{p,q}^{q}-\omega_{N}^{\frac{q}{p}-1}\left(
\dfrac{N-p}{p}\right)  ^{q}\int_{\mathbb{R}^{N}}\dfrac{|u|^{q}}{|x|^{N+q-\frac
{Nq}{p}}}dx\nonumber\label{eq 3.7}\\
&  =\omega_{N}^{\frac{q}{p}-1}\left[  \int_{\mathbb{R}^{N}}\dfrac{|\nabla
u|^{q}}{|x|^{N(1-\frac{q}{p})}}dx-\left(  \dfrac{N-p}{p}\right)  ^{q}%
\int_{\mathbb{R}^{N}}\dfrac{|u|^{q}}{|x|^{N+q-\frac{Nq}{p}}}dx\right]  .
\end{align}
Next, we will use Lemma \ref{lem 3.2} to simplify \eqref{eq 3.7}. More
clearly, choosing $V\left(  r\right)  =r^{N(\frac{q}{p}-1)}$, $\varphi
=r^{\alpha}$, where $\alpha$ will be determined later, we have
\[
\varphi^{\prime}=\alpha r^{\alpha-1}%
\]
which implies that%
\[
V|\varphi^{\prime}|^{q-2}\varphi^{\prime}=\alpha|\alpha|^{q-2}r^{\frac{Nq}%
{p}-N+\alpha q-q-\alpha+1}%
\]
and
\[
(r^{N-1}V|\varphi^{\prime}|^{q-2}\varphi^{\prime})^{\prime}=\alpha
|\alpha|^{q-2}(r^{\frac{Nq}{p}+\alpha q-q-\alpha})^{\prime}=\alpha
|\alpha|^{q-2}\left(  \dfrac{Nq}{p}+\alpha q-q-\alpha\right)  r^{\frac{Nq}%
{p}+\alpha q-q-\alpha-1}.
\]
Choosing $\alpha=-\dfrac{N-p}{p}$, then
\[
(r^{N-1}V|\varphi^{\prime}|^{q-2}\varphi^{\prime})^{\prime}=\left(
\dfrac{|N-p|}{p}\right)  ^{q}r^{\frac{N-p}{p}-1}=\left(  \dfrac{|N-p|}%
{p}\right)  ^{q}r^{N-1}r^{\frac{Nq}{p}-q-N}\varphi^{q-1}.
\]
That is $\left(  r^{N-1}r^{N(\frac{q}{p}-1)},r^{N-1}\left(  \dfrac{|N-p|}%
{p}\right)  ^{q}r^{\frac{Nq}{p}-q-N}\right)  $ is a $p$-Bessel pair on
$\left(  0,\infty\right)  $ with the positive solution $\varphi=r^{-\dfrac
{N-p}{p}}$. Hence, Lemma \ref{lem 3.2} implies
\begin{align*}
\int_{\mathbb{R}^{N}}\dfrac{|\nabla u|^{q}}{|x|^{N(1-\frac{q}{p})}}dx-\left(
\dfrac{N-p}{p}\right)  ^{q}\int_{\mathbb{R}^{N}}\dfrac{|u|^{q}}{|x|^{N+q-\frac
{Nq}{p}}}dx  &  \geq c_{q}\int_{\mathbb{R}^{N}}\dfrac{|\nabla v|^{q}%
}{|x|^{\frac{q(N-p)}{p}}}|x|^{N(\frac{q}{p}-1)}dx\\
&  \geq c_{q}\int_{\mathbb{R}^{N}}\dfrac{|\nabla v|^{q}}{|x|^{N-q}}dx\\
&  \gtrsim\int_{0}^{\infty}\left\vert v^{\prime}\right\vert ^{q}s^{q-1}ds,
\end{align*}
where $u(x)=|x|^{-\frac{N-p}{p}}v(|x|)$. Thus,
\[
\text{LHS}_{\eqref{eq 3.6}}\gtrsim\left(  \int_{0}^{\infty}\left\vert
v^{\prime}\right\vert ^{q}s^{q-1}ds\right)  ^{\theta}\left(  \int_{0}^{\infty
}\dfrac{\left\vert v\right\vert ^{q}}{s}\right)  ^{1-\theta}.
\]
By Holder's inequality, we have with $\frac{1}{q}+\frac{1}{q^{\prime}}=1$,
\[
\left(  \int_{0}^{\infty}\left\vert v^{\prime}\right\vert ^{q}s^{q-1}%
ds\right)  ^{1/q}\left(  \int_{0}^{\infty}\dfrac{\left\vert v\right\vert ^{q}%
}{s}ds\right)  ^{1/q^{\prime}}\geq\left\Vert v\right\Vert _{\infty}^{q}.
\]
Let $\beta>0$ be arbitrary, we have
\[
\left(  \int_{0}^{\infty}\left\vert v^{\prime}\right\vert ^{q}s^{q-1}%
ds\right)  ^{\beta/q}\left(  \int_{0}^{\infty}\dfrac{\left\vert v\right\vert
^{q}}{s}ds\right)  ^{1+\beta/q^{\prime}}\geq\left\Vert v\right\Vert _{\infty
}^{\beta q}\int_{0}^{\infty}\dfrac{\left\vert v\right\vert ^{q}}{s}ds\geq
\int_{0}^{\infty}\dfrac{\left\vert v\right\vert ^{\beta q+q}}{s}ds.
\]
In \cite{DN23}, we have the following estimate for the Lorentz norm of a
function, i.e.
\[
\Vert u\Vert_{p^{\ast},s}^{s}\sim\int_{0}^{\infty}\dfrac{|f(r)|^{s}}{r}dr,
\]
with $u(x)=|x|^{-\frac{N-p}{p}}f(|x|)$. Thus,
\[
\left(  \int_{0}^{\infty}\left\vert v^{\prime}\right\vert ^{q}s^{q-1}%
ds\right)  ^{\beta/q}\left(  \int_{0}^{\infty}\dfrac{\left\vert v\right\vert
^{q}}{s}ds\right)  ^{1+\beta/q^{\prime}}\gtrsim\Vert u\Vert_{p^{\ast},m}^{m},
\]
where $m=\beta q+q$, which implies that
\[
\left(  \int_{0}^{\infty}\left\vert v^{\prime}\right\vert ^{q}s^{q-1}%
ds\right)  ^{\frac{\beta}{(1+\beta)q}}\left(  \int_{0}^{\infty}\dfrac
{\left\vert v\right\vert ^{q}}{s}ds\right)  ^{\frac{\beta+q^{\prime}}%
{(1+\beta)q^{\prime}}}\gtrsim\Vert u\Vert_{p^{\ast},m}^{q}\gtrsim\Vert
u\Vert_{p^{\ast},r}^{q},
\]
provided that $m\leq r$. Therefore, the proof will be completed if we can
determine $\beta>0$ such that $(\beta+1)q\leq r$ and $\dfrac{\beta}%
{(1+\beta)q}=\theta\geq\dfrac{N-p}{N}$. Therefore, we have $\beta\left(
N-qN+pq\right)  \geq(N-p)q$, which implies the following conditions:%
\[
N-qN+pq\geq0\Leftrightarrow q\leq\frac{N}{N-p}\Leftrightarrow r\leq\left(
\dfrac{p^{\ast}}{p}\right)  ^{2},
\]%
\[
\dfrac{Nq}{N-qN+pq}\leq r\Leftrightarrow\dfrac{(N-p)^{2}r-Np}{(N-p)^{2}%
r-N^{2}}\geq0\Leftrightarrow r\leq\dfrac{Np}{(N-p)^{2}}.
\]
Hence $r\leq\dfrac{Np}{(N-p)^{2}}$. Combining with the condition
$\frac{p^{\ast}}{p}\leq r\leq p^{\ast}$ and $p\leq r\leq p^{\ast}$, we have
\[
\max\left\{  \dfrac{N}{N-p},p\right\}  \leq r\leq\min\left\{  \dfrac
{Np}{(N-p)^{2}},\dfrac{Np}{N-p}\right\}  .
\]
Also, we have $\dfrac{\beta}{1+\beta}=q\theta$. That is $\beta=\dfrac{q\theta
}{1-q\theta}$. Therefore we also need $\left(  \dfrac{q\theta}{1-q\theta
}+1\right)  q\leq r$. That is $\dfrac{1}{1-q\theta}\leq\dfrac{r}{q}=\dfrac
{N}{N-p}$. Hence, we deduce that $\dfrac{N-p}{N}\leq\theta\leq\dfrac{p}{Nq}$.
\end{proof}

\end{document}